\newtheorem{theorem}{Theorem}[section]
\newtheorem{lemma}[theorem]{Lemma}
\newtheorem{corollary}[theorem]{Corollary}
\newtheorem{observation}[theorem]{Observation}
\newtheorem{prop}[theorem]{Proposition}
\newtheorem*{claim*}{Claim}
\theoremstyle{definition}
\newtheorem{definition}[theorem]{Definition}
\newtheorem*{qu*}{Question}
\theoremstyle{remark}
\newcommand\N{\mathbb{N}}
\newcommand\R{\mathbb{R}}
\newcommand\Z{\mathbb{Z}}
\newcommand\cA{\mathcal{A}}
\newcommand\cD{\mathcal{D}}
\newcommand\cG{\mathcal{G}}
\newcommand\cH{\mathcal{H}}
\newcommand\cQ{\mathcal{Q}}
\newcommand\cT{\mathbb{T}}
\newcommand\lcm{\operatorname{lcm}}
\newcommand\eps{\varepsilon}
\renewcommand\leq{\leqslant}
\renewcommand\geq{\geqslant}
\renewcommand\le{\leqslant}
\renewcommand\ge{\geqslant}
\renewcommand\to{\rightarrow}
	\def\eps{\varepsilon}
	\def\R{\mathbb{R}}
	\def\Z{\mathbb{Z}}
	\def\Prob{\mathbb{P}}	
	\def\l{\lambda}
	\def\N{\mathbb{N}}
	\def\P{\mathcal{P}}
	\def\AA{\mathcal{A}}
	\def\cA{\mathcal{A}}
	\def\cQ{\mathcal{Q}}	
	\def\FF{\mathcal{F}}
	\def\g{\gamma}
	\def\G{\Gamma}
	\def\p{\partial }	
	\def\la{\langle }
	\def\ra{\rangle }
	\def\<{\langle }
	\def\>{\rangle }
\begin{document}

\title{The structure and number of Erd\H{o}s covering systems}
\author{Paul Balister \and B\'ela Bollob\'as \and Robert Morris \and \\ Julian Sahasrabudhe \and Marius Tiba}

\address{Mathematical Institute, University of Oxford, Radcliffe Observatory Quarter, Woodstock Road, Oxford, OX2 6GG, UK}\email{Paul.Balister@maths.ox.ac.uk}

\address{Department of Pure Mathematics and Mathematical Statistics, Wilberforce Road,
Cambridge, CB3 0WA, UK, and Department of Mathematical Sciences,
University of Memphis, Memphis, TN 38152, USA}\email{b.bollobas@dpmms.cam.ac.uk}

\address{IMPA, Estrada Dona Castorina 110, Jardim Bot\^anico,
Rio de Janeiro, 22460-320, Brazil}\email{rob@impa.br}

\address{Department of Pure Mathematics and Mathematical Statistics, Wilberforce Road, Cambridge, CB3 0WA, UK}
\email{jdrs2@cam.ac.uk}

\address{IMPA, Estrada Dona Castorina 110, Jardim Bot\^anico,
Rio de Janeiro, 22460-320, Brazil}\email{mt576@dpmms.cam.ac.uk}

\thanks{The first two authors were partially supported by NSF grant DMS 1600742, the third author was partially supported by FAPERJ (Proc.~E-26/202.993/2017) and CNPq (Proc.~304237/2016-7), and the fifth author was supported by a Trinity Hall Research Studentship.}

\begin{abstract}
Introduced by Erd\H{o}s in 1950, a \emph{covering system} of the integers is a finite collection of arithmetic progressions whose union is the set $\Z$. Many beautiful questions and conjectures about covering systems have been posed over the past several decades, but until recently little was known about their properties. Most famously, the so-called minimum modulus problem of Erd\H{o}s was resolved in 2015 by Hough, who proved that in every covering system with distinct moduli, the minimum modulus is at most $10^{16}$. 

In this paper we answer another question of Erd\H{o}s, asked in 1952, on the \emph{number} of minimal covering systems. More precisely, we show that the number of minimal covering systems with exactly $n$ elements is 
\[ \exp\left( \left(\frac{4\sqrt{\tau}}{3} + o(1)\right) \frac{n^{3/2}}{(\log n)^{1/2}} \right) \] 
as $n \to \infty$, where 
\[ \tau = \sum_{t = 1}^\infty \left( \log \frac{t+1}{t} \right)^2. \] 
En route to this counting result, we obtain a structural description of all covering systems that are close to optimal in an appropriate sense. 
\end{abstract}
	
	\maketitle 
\section{Introduction}

\enlargethispage*{\baselineskip}
\thispagestyle{empty}

A \emph{covering system\/} is a finite collection of arithmetic progressions that covers\footnote{We emphasize that we do \emph{not} require the progressions to be disjoint. For related work on covering systems with this additional property (sometimes called \emph{exactly covering systems}), see for example~\cite{F,GGRS,M,Z}.} the integers. Erd\H{o}s~\cite{E50} initiated the study of covering systems in 1950, and since then numerous beautiful questions have been asked about their properties (see, for example,~\cite{E50,E52,E73a,E73b,E77,E80,E95,EG,FFKPY,PS,Sch}). Until recently little progress had been made on these problems, but following groundbreaking work of Filaseta, Ford, Konyagin, Pomerance and Yu~\cite{FFKPY} in 2007, a fundamental result was obtained by Hough~\cite{H}, who resolved a problem from the original paper of Erd\H{o}s~\cite{E50} by proving that there do not exist covering systems with distinct moduli and arbitrarily large minimum modulus. Building on his work, the authors of this paper~\cite{BBMST1,BBMST2} recently made further progress on several related open problems. 

In this paper we will study another problem on covering systems, whose study was initiated by Erd\H{o}s~\cite{E52} in 1952: 
$$\textup{How many minimal covering systems of size~$n$ are there?}$$ 
Erd\H{o}s~\cite{E52} gave a simple proof that there are only finitely many minimal\footnote{A covering system $\cA$ is \emph{minimal} if no proper subset of it covers $\Z$. Without this restriction there are infinitely many covering systems of size $2$, since we can take $\cA = \{ \Z, A \}$ for any arithmetic progression $A$.} covering systems of size~$n$, but the bound he obtained on their number was doubly exponential. A more reasonable upper bound follows from a result of Simpson~\cite{Simp}, who proved in 1985 (see Section~\ref{structure:sec}) that the largest modulus in a minimal covering system of size $n$ is at most $2^{n-1}$. Note that this bound is best possible, since $\cA = \big\{ 2^{i-1} \pmod {2^i} : i \in [n-1] \big\} \cup \big\{ 0 \pmod {2^{n-1}} \big\}$ is a minimal covering system, and that it easily implies that there are at most $2^{O(n^2)}$ minimal covering systems of size $n$. We will show that there are in fact rather fewer such systems, and we will moreover determine asymptotically the logarithm of their number. The main aim of this paper is to prove the following theorem.

\begin{theorem}\label{thm:MainCountingThm} 
The number of minimal covering systems of\/ $\Z$ of size $n$ is 
\begin{equation}\label{eq:main:count}
\exp\left( \left( \frac{4\sqrt{\tau}}{3} + o(1)\right)\frac{n^{3/2}}{(\log n)^{1/2}  } \right)
\end{equation}
as $n \to \infty$, where 
\[ 
\tau =  \sum_{t = 1}^{\infty} \left(\log \frac{t+1}{t} \right)^2. 
\]
\end{theorem}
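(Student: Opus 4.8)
The plan is to establish matching upper and lower bounds on the number $\mathcal{N}(n)$ of minimal covering systems of size $n$, both of the form $\exp\big((4\sqrt{\tau}/3 + o(1)) n^{3/2}(\log n)^{-1/2}\big)$. I expect the lower bound to be the easier construction: one builds a large family of covering systems whose moduli are all divisors of a single highly composite integer $m = \prod p_i^{a_i}$, chosen so that $\log m \approx n^{3/2}/(\log n)^{1/2}$ in an optimized way. The idea is that if one fixes a prime power $q = p^a$ and wants the congruences modulo $q$ to cover a $1/p$-fraction of what remains, one has $\approx (p-1)p^{a-1}$ congruence classes mod $q$ to play with, and the number of ways to "top up" the covering at level $q$ is governed by binomial coefficients whose logarithm, summed over the prime-power "levels", produces the sum $\tau = \sum_t (\log\frac{t+1}{t})^2$ after one recognizes $\log\frac{t+1}{t}$ as the density increment when passing from covering a $(1-1/t)$-fraction to a $(1-1/(t+1))$-fraction. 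One sets up a product construction across many primes, checks that generic choices give minimal (not merely covering) systems — this requires a small amount of care, e.g. a random/greedy argument to kill redundancy — and counts.

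For the upper bound I would first invoke the structural input already available: Simpson's theorem that the largest modulus is at most $2^{n-1}$, and more importantly the structural description of near-optimal covering systems promised in the abstract (the paper's own structure theorem, which I am entitled to assume is proved earlier). The strategy is to show that any minimal covering system of size $n$ that is not "close to optimal" is vastly outnumbered, so the count is dominated by the near-optimal ones, and then to count those directly using the structural description: their moduli are forced to lie among divisors of a bounded number of "admissible" smooth moduli patterns, and within each pattern the number of choices is again a product of binomial coefficients. The bookkeeping reduces to the optimization problem $\max \sum_i f(a_i, p_i)$ subject to $\sum_i (\text{cost}) \le n$, whose continuous relaxation yields the constant $4\sqrt{\tau}/3$ via a Lagrange-multiplier / calculus-of-variations computation (the $4/3$ arising from integrating $x^{1/2}$-type densities against the entropy weight).

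The main obstacle, I expect, is the upper bound — specifically, ruling out the possibility that exotic, highly non-smooth covering systems (whose moduli are not concentrated on divisors of one nice integer) could contribute comparably to the count. This is exactly where the paper's structural theorem must do the heavy lifting: one needs that every minimal covering system of size $n$, after discarding a negligible number of exceptions, has its modulus multiset essentially determined up to a sub-exponential number of choices by a bounded-complexity "profile" $(p_i, a_i)$. Controlling the error terms in this reduction — making sure the number of exceptional systems is $\exp\big(o(n^{3/2}(\log n)^{-1/2})\big)$ rather than merely $\exp(o(n^2))$ — is the delicate part, and will presumably consume the bulk of the argument. A secondary technical point is matching the lower-order features: one must verify that the optimization is genuinely solved by the smooth profile and that boundary effects (finitely many small primes, the tail of $\tau$) do not perturb the leading constant, which is a routine but careful limiting argument.
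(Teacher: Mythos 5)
Your high-level plan (lower bound by a smooth-modulus construction, upper bound by invoking the structural theorem and then optimizing a profile) does match the paper's strategy, but several of the specific mechanisms you describe are wrong or missing, and they are exactly the places where the constant $4\sqrt{\tau}/3$ actually comes from. First, your interpretation of $\tau$ is not what happens. The terms $\log\frac{t+1}{t}$ are not ``density increments'' (note $\frac{1}{t}-\frac{1}{t+1}$ is not $\log\frac{t+1}{t}$), and no binomial coefficients appear anywhere. In the paper $\log\frac{t+1}{t}$ arises as the telescoping summand of $\log(\gamma+1)$, where $\gamma+1$ is the number of divisors of $q^\gamma$ --- equivalently, the number of initial segments of $\{(q,1),\ldots,(q,\gamma)\}$ that can occur as the $q$-part of the fixed set of an \emph{arithmetic} hyperplane. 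The lower bound counts frames: for each $(p,e)$ one chooses $p-1$ hyperplanes whose fixed coordinates to the ``left'' must form initial segments in each prime direction, giving $\prod_{q\ne p}(|J_q(p,e)|+1)$ choices, and the sum $Q(N,\prec)$ of the resulting logarithms is then optimized over the ordering $\prec$ (via $y_{p,e}=(p-1)/\log\frac{e+1}{e}$) to hit the constant. Also, your worry about ``generic choices give minimal systems'' and a ``random/greedy argument to kill redundancy'' is misdirected: frames are \emph{automatically} minimal (removing a hyperplane of $\FF_i$ uncovers a specific point), so no such argument is needed; the genuine subtlety you did not identify is that not every hyperplane in $S_{\<N\>}$ corresponds to an arithmetic progression in $\Z_N$, which is precisely why the ``initial segment'' constraint appears and why the ordering has to be chosen arithmetically.

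On the upper bound, you correctly see that the structural theorem plus a tail bound on non-near-optimal systems is the shape of the argument, but you are missing an indispensable ingredient: counting the \emph{shifts} given the moduli. The structural theorem and the frame count only control the multiset of fixed sets (i.e.\ moduli); a priori a single modulus multiset could correspond to a huge number of distinct covering systems via the choice of residues. The paper handles this with a separate combinatorial lemma (a reordering/greedy argument showing that given the moduli there are at most $(n!)^2$ minimal covering systems), and without such a lemma the reduction from ``counting moduli profiles'' to ``counting covering systems'' simply does not close. You also do not mention the extremal optimization step --- bounding $\sum_i\log(\gamma_i+1)$ subject to $\sum_i\gamma_i(p_i-1)\le M$ via Lagrange multipliers and the prime number theorem --- which is where the factor $2\sqrt{\tau}$ (and ultimately, after integrating along the ordering, $\frac{4}{3}\sqrt{\tau}$) is actually produced. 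So while your outline is compatible with the paper's, the places you describe as ``routine bookkeeping'' are in fact the core of the argument and are not correctly identified in your plan.
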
 

We remark that proving a weaker upper bound, with a different constant in the exponent, is significantly easier, and we will give a short proof of such a bound in Section~\ref{counting:weird:frames:sec}. Let us also note here that we will prove the lower bound under the additional restriction that the moduli are distinct, and so the conclusion of Theorem~\ref{thm:MainCountingThm} also holds for such systems. 

In order to motivate the form of the formula~\eqref{eq:main:count}, let us begin by describing a simple construction that gives a slightly weaker lower bound. Let $p_1 < \ldots < p_k$ be the first $k$ primes, and for each $i \in [k]$, choose $p_i - 1$ arithmetic progressions $A^{(i)}_1,\ldots,A^{(i)}_{p_i-1}$ with the following properties: for each $j \in [p_i-1]$, the modulus of $A^{(i)}_j$ is divisible by $p_i$ and divides $Q_i := p_1\cdots p_i$, and $A^{(i)}_j$ contains $j \cdot Q_{i-1}$. It is not difficult to show that, for each such choice, by adding the progression $\big\{ 0 \pmod {Q_k} \big\}$ we obtain a distinct minimal covering system of size $n = \sum_{i = 1}^k (p_i - 1) + 1 \approx k^2 \log k$. Since we have $2^{i-1}$ choices for the progression $A^{(i)}_j$ for each $i \in [k]$ and $j \in [p_i-1]$, this implies that there are at least 
$$\prod_{i = 1}^k 2^{(i-1)(p_i - 1)}\, = \, \exp\Big( \Omega\big( k^3 \log k \big) \Big) \, = \, \exp\bigg( \frac{\Omega\big( n^{3/2} \big)}{(\log n)^{1/2}  } \bigg)$$
minimal covering systems of $\Z$ of size $n$. In Section~\ref{arithmetic:sec} we will describe a somewhat more complicated construction that proves the lower bound in Theorem~\ref{thm:MainCountingThm}. 

We will refer to collections of progressions as in the construction above as ``frames" (see Section~\ref{structure:sec} for a precise definition). The second main result of this paper, and the key step in the proof of Theorem~\ref{thm:MainCountingThm}, will be a structural description of all ``efficient" covering systems; roughly speaking, we will show that every such covering system contains a large ``approximate frame". The purpose of the next section is to state this structural theorem.

\section{The structure of efficient coverings}\label{structure:sec}

{\setstretch{1.12}
In this section we will state our main structural theorem. In order to do so, it will be convenient to shift our attention to the following (slightly more general) \emph{geometric} setting. Let $S_1,\ldots,S_k$ be finite sets with at least two elements and set $S_I := \prod_{i \in I} S_i$ for each $I \subseteq [k]$. If $H = H_1 \times\cdots \times H_k \subseteq S_{[k]}$ with each $H_i$ either equal to $S_i$ or a singleton element of $S_i$, then we say that $H$ is a {\em hyperplane}. We write $F(H) := \{ i \in [k] : |H_i| = 1 \}$ for the \emph{fixed coordinates} of $H$, and $F(\cA) := \bigcup_{H \in \cA} F(H)$ if $\cA$ is a collection of hyperplanes. We will also write $H = [x_1,\ldots,x_k]$, where $x_i \in S_i \cup \{ * \}$ for each $i \in [k]$, and $*$ indicates that $H_i = S_i$. 

\begin{definition}\label{def:frame}
A \emph{simple frame} centred at an element $(s_1,\ldots,s_k) \in S_{[k]}$ (which we call the \emph{axis}), is a sequence $(\FF_1,\ldots,\FF_k)$, where $\FF_i$ is a collection of $|S_i| - 1$ hyperplanes of the form
\begin{equation}\label{eq:frame:elements}
\big[ x_1,\ldots,x_{i-1},a,*,\cdots,*\big],
\end{equation}
one for each $a \in S_i \setminus \{s_i\}$, with $x_j \in \{ s_j, * \}$ for each $j \in [i-1]$.

A \emph{frame} is obtained from a simple frame by permuting the order of the sets $S_1,\ldots,S_k$.
\end{definition}

Observe that if $(\FF_1,\ldots,\FF_k)$ is a frame centred at $(s_1,\ldots,s_k)$, then the collection 
$$\cA := \FF_1 \cup \cdots \cup \FF_k \cup \big\{ [s_1,\ldots,s_k] \big\}$$ 
is a minimal cover of $S_{[k]}$. Indeed, if we remove the hyperplane $\big[ x_1,\ldots,x_{i-1},a,*,\cdots,*\big]$ from $\cA$, then the element $(s_1,\ldots,s_{i-1},a,s_{i+1},\ldots,s_k)$ will be uncovered by the remaining hyperplanes. Note that if we set $S_i = \{0,\ldots,p_i-1\}$ for each $i \in [k]$, 
then the construction given in the introduction is equivalent to a frame centred at $(0,\ldots,0)$. When we (for now informally, but later on precisely) discuss frames in $\Z$, we will always mean that each set $S_i = \{0,\ldots,p-1\}$ for some prime $p$ (these primes will not generally be distinct), and we will map $S_1 \times \cdots \times S_k$ into $\Z_N$, where $N = \prod_{i = 1}^k |S_i|$, using the Chinese Remainder Theorem to identify $\Z_N$ with the product of groups $\Z_{p^\gamma}$, and then expanding base $p$.\footnote{For example, $(a_0,a_1,\dots,a_{\gamma-1}) \in \Z_p \times \dots \times \Z_p$ corresponds to the element $\sum_{i = 0}^{\gamma-1} a_i p^i \in \Z_{p^\gamma}$.} Note that every arithmetic progression in $\Z$ corresponds to a hyperplane, but not every hyperplane corresponds to an arithmetic progression if primes are repeated (see Sections~\ref{arithmetic:sec} and~\ref{MainCountingProofSec}). 
  
The key idea behind the proof of Theorem~\ref{thm:MainCountingThm} is the following (imprecise) conjecture:
$$\textup{``Almost every minimal covering system of $\Z$ of size $n$ is close to a frame."}$$
We will not prove a result of this form; instead, we will use a slightly weaker notion, which we call a \emph{$\delta$-generalized frame}. These objects differ from frames in two key ways: the fixed elements ``to the left" of $i$ in a hyperplane $H \in \FF_i$ are allowed to vary with $i$, and instead of insisting that ``all coordinates to the right are free" (as in~\eqref{eq:frame:elements}), we allow a few ``small" coordinates to be fixed (with the product of their sizes bounded by $1/\delta$). 

The next definition is both important and somewhat technical, and we will need some additional notation. Given a hyperplane $H$, we write $H_i$ for its $i$th coordinate, and for any $I \subseteq [k]$ we will write $H_I = \prod_{i \in I} H_i$ for the hyperplane in $S_I$ obtained by restricting $H$ to the coordinates of $I$, and define $\mu_I(H) := |H_I| \cdot |S_I|^{-1}$ when $I \ne \emptyset$, and $\mu_\emptyset(H) := 1$.}

\begin{definition}[$\delta$-generalized frames]\label{def:generalizedframe}
Let $\delta > 0$, and let $S_1,\ldots,S_k$ be finite sets with at least two elements. A~\emph{simple $\delta$-generalized frame} in $S_{[k]}$ is a sequence $(\FF_1,\ldots,\FF_k)$, where $\FF_i$ is a collection of at most $|S_i| - 1$ hyperplanes, satisfying the following conditions. For each $i \in [k]$, there exists a set $I(i) \supseteq \{ i+1,\ldots,k \}$, and  for each $j \not\in I(i) \cup \{i\}$, there exists an element $s_j(i) \in S_j$, such that, for each $H \in \FF_i$,
$$i \in F(H), \qquad\quad \mu_{I(i)}(H) > \delta \qquad\quad \text{and} \qquad\quad H_j \in \big\{ s_j(i), S_j \big\}.$$
Moreover, if $\min\big\{ |S_i|, |S_j| \big\} \ge \delta^{-1}$ and $i \ne j$, then $\FF_i$ and $\FF_j$ are disjoint. A \emph{$\delta$-generalized frame} is obtained from a simple $\delta$-generalized frame by permuting the sets $S_1,\ldots,S_k$.
\end{definition}

We are now ready to state our main structural theorem for covering systems that contain roughly (up to a constant factor) the same number of elements as a frame. 

\begin{theorem} \label{thm:mainStructureThm} 
For every $C, \eps > 0$ there exists $\delta = \delta(C,\eps) > 0$ so that for every collection of finite sets $S_1,\ldots,S_k$ with at least two elements, the following holds. If\/ $\cA$ is a minimal cover of $S_{[k]}$ with hyperplanes such that $F(\cA) = [k]$ and
\begin{equation}\label{eq:structure:assumption}
|\cA | \leq C \sum_{i=1}^k \big( |S_i| - 1 \big),
\end{equation}
then $\cA$ contains a $\delta$-generalized frame $(\FF_1,\ldots,\FF_k)$, with 
\begin{equation}\label{eq:structure:conclusion}
\sum_{i=1}^k |\FF_i| \geq (1 - \eps) \sum_{i=1}^k \big( |S_i| - 1 \big).
\end{equation}
 \end{theorem}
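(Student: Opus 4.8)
The plan is to induct on $k$ (or, essentially equivalently, on $\sum_i(|S_i|-1)$), peeling off one coordinate at a time. Fix $C,\eps>0$ and a minimal cover $\cA$ of $S_{[k]}$ with $F(\cA)=[k]$ and $|\cA|\le C\sum_i(|S_i|-1)$. After permuting the $S_i$, we may assume $|S_1|\ge\dots\ge|S_k|$. The first step is to identify the ``top'' coordinate $i=1$ and the family $\FF_1$ attached to it. Because $\cA$ is minimal, for each hyperplane $H\in\cA$ there is a witness point $x^H\in S_{[k]}$ covered only by $H$; restricting attention to $F(H)$ gives control on how much $H$ can overlap other hyperplanes. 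The key counting input is a ``local'' lemma of the following shape: if $\cB$ is a minimal cover of $S_{[k]}$ by hyperplanes and $\rho_i$ denotes the density $|S_i|^{-1}$, then the number of $H\in\cB$ with $1\in F(H)$ but whose ``spread to the right'' $\mu_{\{2,\dots,k\}}(H)$ is at least $\delta$ must be close to $|S_1|-1$ — otherwise one can delete a small set of hyperplanes, collapse the first coordinate to a single fibre, and produce a minimal cover of a product of the remaining $S_i$ that is too large relative to $C\sum_{i\ge 2}(|S_i|-1)$, contradicting the induction hypothesis on the coarser space (after adjusting $C$ slightly, which is affordable since we lose only a $(1-\eps')$ factor at each of the $\le k$ stages and $\eps'$ can be taken $\eps/(2k)$-ish... actually one wants a single $\delta$, so the induction must be set up so the constants do not degrade with $k$; see below).

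The second step, once $\FF_1$ of size $\ge(1-\eps_1)(|S_1|-1)$ is extracted, is to pass to the residual problem. The hyperplanes of $\FF_1$ each fix coordinate $1$ to one of $|S_1|-1$ values, leaving (at most) one fibre $S_1=\{*\}$-value, say $s_1$, ``uncovered'' by $\FF_1$; the remaining hyperplanes of $\cA\setminus\FF_1$ must cover the sub-box $\{s_1\}\times S_{\{2,\dots,k\}}$, hence their restriction is a cover of $S_{\{2,\dots,k\}}$, and one checks it contains a \emph{minimal} subcover $\cA'$ with $F(\cA')=\{2,\dots,k\}$ and $|\cA'|\le C'\sum_{i\ge2}(|S_i|-1)$ for a controlled $C'$. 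Apply the inductive structural statement to $\cA'$ to get a $\delta'$-generalized frame $(\FF_2',\dots,\FF_k')$ on the smaller box. Then lift: set $\FF_i$ to be the hyperplanes of $\cA$ whose restrictions gave $\FF_i'$, set $I(i)$ and the $s_j(i)$ by combining the data from $\cA'$ with the extra fixed coordinate(s) coming from the lift (this is exactly where the ``a few small coordinates may be fixed, product of sizes $\le1/\delta$'' slack in Definition~\ref{def:generalizedframe} is needed — the restriction step can fix coordinate $1$, and if $|S_1|$ were small this is harmless, while if $|S_1|$ is large we instead argue $s_1$ is essentially forced so nothing is fixed). Finally verify the disjointness clause of Definition~\ref{def:generalizedframe}: two hyperplanes in $\FF_i,\FF_j$ with $\min(|S_i|,|S_j|)\ge\delta^{-1}$ coincide only in low-probability situations that contradict minimality and the size bound, so after discarding $O(\eps)\sum_i(|S_i|-1)$ hyperplanes we may assume it.

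The main obstacle is making the induction \emph{uniform in $k$}: a naive recursion loses a factor $(1-\eps_1)$ and worsens $C\mapsto C'$ at each of $k$ levels, which is fatal. The fix is to not recurse level-by-level but to run a single global argument: use the minimality witnesses to define, for every coordinate $i$, a candidate family $\FF_i$ directly (hyperplanes $H$ with $i\in F(H)$, $i=\max F(H)$ or some canonical choice, and large right-spread), then show $\sum_i|\FF_i|$ is large by a \emph{global} double-counting — each hyperplane is assigned to at most one coordinate, and the total deficit $\sum_i\big((|S_i|-1)-|\FF_i|\big)$ is bounded by the number of hyperplanes that are ``inefficient'' (small right-spread, or fixing many coordinates), which in turn is bounded using \eqref{eq:structure:assumption} and an entropy/volume argument in the spirit of Hough and of~\cite{BBMST1,BBMST2}: a hyperplane with small $\mu_{I(i)}$ fixes coordinates whose sizes multiply to something large, and a minimal cover cannot afford many such without exceeding $C\sum(|S_i|-1)$. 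The choice of $\delta$ then depends only on $C$ and $\eps$, not on $k$. The technical heart — and where I expect the real work to be — is proving this deficit bound, i.e.\ that ``inefficient'' hyperplanes are rare, and simultaneously arranging the $I(i)$, $s_j(i)$ consistently; the disjointness and the lifting are comparatively routine bookkeeping once that is in place.
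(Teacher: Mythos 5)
Your proposal correctly diagnoses the central difficulty — a naive peel-one-coordinate induction loses a multiplicative constant at each of $k$ levels, so one needs a single, $k$-uniform argument — and you correctly sense that the answer should involve a global double-counting where ``inefficient'' hyperplanes are charged against the assumption~\eqref{eq:structure:assumption}. But the proposal stops exactly at the two places where the real ideas of the proof live, and as written the global argument has a genuine gap.

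First, you do not explain how to locate, for a given sub-box, a coordinate $i$ that is either ``good'' (supports a near-complete $\FF_i$ of hyperplanes with large right-spread) or ``bad'' (supports a large garbage collection). It is not true that the deficit is simply bounded by the number of hyperplanes with small $\mu_{I(i)}$: since the sets $I(i)$ overlap heavily, one hyperplane $H$ that fixes a single large coordinate $j$ has small $\mu_{I(i)}(H)$ for essentially \emph{every} $i\ne j$, so a naive count double-counts catastrophically. The paper avoids this by applying the Lov\'asz Local Lemma (Lemma~\ref{lem:localLem}) on a carefully restricted box $R_I$: after removing the ``special'' fibres, if no coordinate carried a heavy weight of fixed hyperplanes, the restricted box could not be covered at all, contradicting minimality. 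This is where the bad case with a concentrated garbage set $\cG$ actually comes from, and nothing in your sketch produces it.

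Second, and more seriously, you assert ``each hyperplane is assigned to at most one coordinate'' without a mechanism. The paper's mechanism is the exploration tree together with the depth-first search choice of the representative vertex $\beta(i)$ for each coordinate: Lemmas~\ref{lem:garbage:on:a:path} and~\ref{lemma:different:fixed:sets:along:path} show that a hyperplane can appear in the garbage sets $\cG_i$ only along a single root-to-leaf path, with $|F(H)\cap I_{\beta(i)}|$ strictly decreasing, so its total contribution to the deficit is a convergent geometric sum — this is exactly what justifies~\eqref{cond:BigGarbage} summing to $O(|\cA|)$. The same DFS structure is what makes the sets $I(i)$ and the elements $s_j(i)$ internally consistent (Definition~\ref{def:generalized:tree:frame} and Lemma~\ref{lem:tree:frames:are:frames:too}) and what forces the disjointness clause for large $|S_i|$ (Lemma~\ref{lem:big:frames:disjoint}). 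Your proposal treats the consistency and disjointness as ``routine bookkeeping,'' but they are in fact consequences of the same DFS choice that makes the charging argument work at all. Without an explicit tree and a specific ordering rule, neither the deficit bound nor the choice of $I(i)$ goes through, so the proposal as it stands is a plan for a proof rather than a proof.
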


The theorem above can be thought of as an inverse theorem for the following extremal result of Simpson~\cite{Simp}. If $\cA$ is a collection of arithmetic progressions, then we write $\lcm(\cA)$ for the least common multiple of the moduli of the progressions in $\cA$. 


\begin{theorem}[Simpson's theorem]\label{Simpson:thm}
If $\cA$ is a minimal cover of $S_{[k]}$ with hyperplanes such that $F(\cA) = [k]$, then
$$|\cA | \ge \sum_{i=1}^k \big( |S_i| - 1 \big) + 1.$$
In particular, if $\cA$ is a minimal covering system of $\Z$ with $\lcm(\cA) = p_1^{\g_1} \cdots p_m^{\g_m}$, then
$$|\cA | \ge \sum_{i=1}^m \g_i \big( p_i - 1 \big) + 1.$$
\end{theorem}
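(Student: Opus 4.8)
The plan is to derive the assertion about covering systems of $\Z$ from the geometric one. Given a minimal covering system $\cA$ of $\Z$ with $\lcm(\cA)=p_1^{\g_1}\cdots p_m^{\g_m}$, use the Chinese Remainder Theorem and base‑$p$ expansion (exactly as described above) to regard $\cA$ as a collection of hyperplanes in $S_{[k]}$, where $k=\sum_i\g_i$ and $|S_\ell|=p_i$ whenever the $\ell$‑th coordinate records a base‑$p_i$ digit. Every progression has modulus dividing $\lcm(\cA)$, so covering $\Z$ is the same as covering $\Z/\lcm(\cA)\Z\cong S_{[k]}$; hence a proper subcollection covering $S_{[k]}$ would cover $\Z$, and $\cA$ remains minimal. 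Moreover $F(\cA)=[k]$, precisely because the $p_i$‑adic valuation of $\lcm(\cA)$ is exactly $\g_i$, so every digit‑coordinate is fixed by some modulus. The geometric statement then yields $|\cA|\ge\sum_\ell(|S_\ell|-1)+1=\sum_i\g_i(p_i-1)+1$. So it suffices to prove the first assertion, which I would do by induction on $k$; the case $k=1$ is immediate, since a singleton hyperplane is forced for each element of $S_1$ (the whole space cannot be used, as then $F(\cA)=\emptyset$), and $k=0$ is trivial.

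\textbf{The inductive step: slicing.}
Pick a coordinate in $F(\cA)$, say coordinate $1$, and split $\cA$ according to its behaviour there: let $\FF\subseteq\cA$ consist of the hyperplanes with first coordinate $S_1$, and for $a\in S_1$ let $\mathcal{R}_a\subseteq\cA$ consist of those with first coordinate $\{a\}$. The key consequence of minimality is that $\mathcal{R}_a\ne\emptyset$ for every $a\in S_1$: if $\FF$ covered $S_{[k]}$, minimality would force $\cA=\FF$, contradicting $1\in F(\cA)$, so some point $v$ of $\prod_{i=2}^k S_i$ is missed by every member of $\FF$, and then $(a,v)$ can be covered only by a hyperplane fixing the first coordinate to $a$. (Applied to each coordinate, this also shows that for every $j$ and every $b\in S_j$ some hyperplane of $\cA$ fixes coordinate $j$ to $b$ — an auxiliary fact I expect to need below.) In particular, writing $n_i:=|S_i|$, the disjoint decomposition $\cA=\FF\sqcup\bigsqcup_{a}\mathcal{R}_a$ already gives $|\cA|=|\FF|+\sum_{a\in S_1}|\mathcal{R}_a|\ge|\FF|+n_1$.

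\textbf{Finishing, granted a ``hard'' slice.}
Suppose we can find $a^*\in S_1$ such that the restriction of $\cA$ to the slice $\{x_1=a^*\}$ — a cover of $\prod_{i=2}^k S_i$ using only hyperplanes from $\FF\cup\mathcal{R}_{a^*}$ — admits a minimal subcover $\cC$ with $F(\cC)=\{2,\dots,k\}$. The induction hypothesis applied to $\cC$ gives $|\cC|\ge\sum_{i=2}^k(n_i-1)+1$, and since $\cC\subseteq\FF\cup\mathcal{R}_{a^*}$ this forces $|\FF|+|\mathcal{R}_{a^*}|\ge\sum_{i=2}^k(n_i-1)+1$. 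Combining with $|\mathcal{R}_a|\ge1$ for the remaining $n_1-1$ values of $a$,
\[
|\cA| \;=\; |\FF|+\sum_{a\in S_1}|\mathcal{R}_a|
\;\ge\; \big(|\FF|+|\mathcal{R}_{a^*}|\big)+(n_1-1)
\;\ge\; \sum_{i=2}^k(n_i-1)+1+(n_1-1)
\;=\; \sum_{i=1}^k(n_i-1)+1,
\]
as required (and frames attain equality).

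\textbf{The main obstacle.}
The hard part will be producing the ``hard'' slice $a^*$: a minimal subcover of a slice may drop coordinates, and indeed in a frame every slice of the axis coordinate except one is covered by a single large hyperplane. What must be shown is that not every slice can be ``cheap'' in this way. If some coordinate $j\in\{2,\dots,k\}$ were dispensable on \emph{every} slice $\{x_1=a\}$ — i.e.\ each such slice is covered by hyperplanes free in coordinate $j$ — then $S_{[k]}$ would be covered by hyperplanes free in coordinate $j$, forcing $j\notin F(\cA)$, a contradiction; the remaining work is to rule out the configurations where the dispensable coordinate genuinely varies with the slice, which I would handle by choosing the slicing coordinate adaptively and recursing into the lower‑dimensional structure carried by a slice that drops a coordinate, using the auxiliary fact above to ensure no coordinate is ever lost for free. (A cleaner route is available whenever some hyperplane $H_0\in\cA$ has a \emph{unique} private point $w$ — for instance whenever $\cA$ contains a singleton: removing $H_0$ and multiplying, over the remaining $H\in\cA$, one linear factor $x_{i_H}-(\text{value})$ with $i_H\in F(H)$ chosen so that $w$ violates it, produces a polynomial of degree $|\cA|-1$ which is nonzero at $w$ but vanishes on every other point of the grid, so that Alon--Füredi gives $|\cA|-1\ge\sum_i(n_i-1)$; reducing the general case to this is then the point.)
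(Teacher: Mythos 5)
Your reduction from covering systems of $\Z$ to the geometric statement is correct, and the slicing decomposition $\cA = \FF \sqcup \bigsqcup_{a \in S_1}\mathcal{R}_a$ together with the observation $\mathcal{R}_a \ne \emptyset$ is also correct. The proof has a genuine gap, however, exactly where you flag ``the main obstacle'': you need a slice $a^*$ whose minimal subcover $\cC$ has full fixed set $F(\cC) = \{2,\ldots,k\}$, and such a slice need not exist in the coordinate you slice on. For example, on $\{a,b\}\times\{0,1\}^2$ take
\[
\cA = \big\{\,[a,0,*],\ [a,1,*],\ [b,*,0],\ [b,*,1]\,\big\}.
\]
This is a minimal cover with $F(\cA)=\{1,2,3\}$, but the slice $x_1=a$ has (unique) minimal subcover with fixed set $\{2\}$, and $x_1=b$ yields fixed set $\{3\}$: there is no hard slice in direction $1$, even though the union of the two fixed sets is $\{2,3\}$. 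Your proposed remedy --- ``choosing the slicing coordinate adaptively and recursing'' --- is left entirely vague, and it is not at all clear that it terminates or even that a hard slice exists in \emph{some} direction for every minimal cover. Likewise, the Alon--F\"uredi argument you sketch is correct when some hyperplane has a unique private point, but you acknowledge that the reduction to that case is the real work, and you do not supply it.

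The paper's appendix resolves precisely this difficulty by strengthening the statement to be proved: for any $I\subsetneq F(\cA)$, the number of $H\in\cA$ with $F(H)\not\subseteq I$ is at least $\sum_{i\in F(\cA)\setminus I}(|S_i|-1)+1$, and the induction is on $|F(\cA)|$. After slicing on a coordinate $i$, one builds a nested chain $R_0=I\subseteq R_1\subseteq\cdots\subseteq R_p=F(\cA)\setminus\{i\}$ by successively adjoining the fixed sets of the minimal subcovers of each slice, applies the parametrized induction hypothesis to each slice with the appropriate intermediate set $I_s=R_{s-1}\cap F_i(\cA_s)$, and observes that the resulting collections $\cQ_s$ are pairwise disjoint because membership in $\cQ_s$ is pinned down by the pair $(R_{s-1},R_s)$. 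This lets contributions from \emph{different} slices, each fixing a \emph{different} subset of coordinates, accumulate to the full count without ever requiring a single hard slice. Your auxiliary fact (that the union of the fixed sets over the slices is $[k]\setminus\{i\}$) is exactly what drives that chain, but by itself it does not produce the hard slice your argument needs; the parametrized induction hypothesis is the missing idea.
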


In the appendix, we will provide (for the reader's convenience) a proof of Simpson's theorem. Let us also remark here that, while the form of the function $\delta(C,\eps)$ will not matter for our purposes, we will prove that Theorem~\ref{thm:mainStructureThm} holds with $\delta = (\eps / C)^{O(\log(1 / \eps))}$. 

In order to deduce Theorem~\ref{thm:MainCountingThm} from Theorem~\ref{thm:mainStructureThm}, we will need to count $\delta$-generalized frames quite precisely, and show that there are relatively few choices for the remaining elements; we will also need to show that there are few minimal covering systems that fail to satisfy~\eqref{eq:structure:assumption}. These calculations are carried out in Sections~\ref{counting:weird:frames:sec} and~\ref{MainCountingProofSec}.

\subsection{An outline of the proof of the structural theorem}

\enlargethispage*{\baselineskip}
{\setstretch{1.07}

The proof of Theorem~\ref{thm:mainStructureThm} requires a few somewhat technical definitions, and to prepare the reader for these we will begin by giving an outline of the argument. The idea is to construct a tree that encodes the structure of the covering system by `exploring' it coordinate by coordinate. To be more precise, given a minimal cover $\cA$ of $S_{[k]}$, let us choose a coordinate $i \in [k]$ to explore, and observe (see Section~\ref{operation:sec} for the details) that for each $s \in S_i$ we obtain a covering system of 
$$S_1 \times \cdots \times S_{i-1} \times \{s\} \times S_{i+1} \times \cdots \times S_k,$$
which we identify with $S_{[k] \setminus \{i\}}$. (Here the hyperplanes $H \in \cA$ with $H_i = S_i$ appear in each of the $|S_i|$ covering systems corresponding to coordinate~$i$.) These covering systems may not be minimal, but for each $s \in S_i$ we can take a minimal sub-covering~$\cA_s$. 

Now, some of the systems $\cA_s$ may be trivial (i.e., may consist of a single hyperplane), and when this occurs we are happy, because such hyperplanes can be used in the frame that we are trying to construct. For the remaining elements $s \in S_i$, we consider the set of fixed coordinates $F(\cA_s)$ of $\cA_s$, and observe (see Lemma~\ref{lem:coveringfixedpoints}) that every coordinate (except~$i$) is in $F(\cA_s)$ for some $s \in S_i$. We may now choose, for each $s$ such that $F(\cA_s)$ is non-empty, a coordinate $j \in F(\cA_s)$, and repeat the above construction, exploring the minimal covering system $\cA_s$, starting with the coordinate~$j$. Iterating this process produces a rooted tree (which we call an `index tree', see Definition~\ref{def:index:tree}), each of whose vertices is labelled with a set $I \subseteq [k]$ and a coordinate $i \in I$, which are the fixed coordinates of the corresponding minimal covering system, and the coordinate `explored' at that vertex, respectively. 

So far, we have not said anything about how to construct the sets $\FF_i$, or how to choose the coordinate~$i$ that we explore in a given step. For simplicity, let us explain this only for the first step (the choice for later steps is similar). First, if there exists $i \in [k]$ such that there are at least $(1-\eps)\big( |S_i| - 1 \big)$ hyperplanes $H \in \cA$ with $i \in F(H)$ and $\mu_{[k] \setminus \{i\}}(H) > \delta$, then we choose such a coordinate~$i$ to explore, and associate this collection of `frame-like' hyperplanes with the current vertex (in this case, the root of the tree). One of the key ideas of the paper is that, if such a collection of hyperplanes does not exist for any $i \in [k]$, then we may use the Lov\'asz Local Lemma to deduce (see Lemma~\ref{lem:localLem}) that there exists a coordinate~$j$ (which we will choose to explore), and a `large' collection $\cG$ of hyperplanes in the current collection, such that~$j$ is a fixed coordinate of each. This collection of `garbage' hyperplanes will later be used, together with~\eqref{eq:structure:assumption}, to show that this case does not occur too often. 

The plan described above is carried out in Section~\ref{exploring:sec}, the main result being Lemma~\ref{lem:explorationTree}, which states that if $\delta$ is sufficiently small, then there exists a suitable `exploration tree' $\cT$ of $\cA$ (see Definition~\ref{def:exploration:tree}). This exploration tree can be very large, however, and to extract our $\delta$-generalized frame from it we will need to choose a suitable sub-tree $T$. To do so, we choose $k$ `special' vertices of $\cT$, one for each coordinate, and take the union of the paths from these vertices to the root. If almost all of these special vertices are `good' (that is, we found a large collection of frame-like hyperplanes when exploring them), then we obtain a sufficiently large $\delta$-generalized frame. On the other hand, if a positive proportion of them are `bad', then we use the `garbage' hyperplanes to show that inequality~\eqref{eq:structure:assumption} cannot hold. In order to carry out this argument, we need to choose the special vertices carefully; it turns out that it is sufficient to choose them via a depth-first search, see Section~\ref{frame:structure:sec} for the details.}

\section{Exploring the cover}\label{exploring:sec}

{\setstretch{1.12}

In this section we will take the first step towards Theorem~\ref{thm:mainStructureThm} by describing a much larger object that is somewhat easier to construct, the \emph{exploration tree}. To define these, we first need to introduce the following simpler objects, which we call \emph{index trees}. Let us fix, for the rest of the proof of Theorem~\ref{thm:mainStructureThm}, a collection of finite sets $S_1,\ldots,S_k$ with at least two elements, and let us write $N(u)$ for the set of out-neighbours of $u$ in a rooted tree, where we orient the edges away from the root. 

\begin{definition}\label{def:index:tree}
An \emph{index tree} $\cT$ of $[k]$ is a rooted tree, equipped with a labelling of its vertices $u \mapsto (I_u,i_u)$, where $I_u \subseteq [k]$ and $i_u \in I_u$, that satisfies the following conditions:
\begin{itemize}
\item[$(i)$] the root of $\cT$ has label $([k],i)$ for some $i \in [k]$;
\item[$(ii)$] for each vertex $v \in V(\cT)$,  
\begin{equation}\label{eq:index:tree}
\bigcup_{u \in N(v)} I_u = I_v \setminus \{ i_v \}.
\end{equation}
\end{itemize}
\end{definition}

We can now define the exploration tree of a collection of hyperplanes in $S_{[k]}$. Given a rooted tree $T$ and vertices $u,v \in V(T)$, let us write $u \prec_T v$ to indicate that $u$ lies on the path from $v$ to the root (so, in particular, $v \prec_T v$).

\begin{definition}\label{def:exploration:tree}
Let $\lambda,\eps, \delta >0$, and let $\cA$ be a collection of hyperplanes in $S_{[k]}$. An $(\lambda,\eps,\delta)$-\emph{exploration tree} of $\cA$ is an index tree $\cT$ of $[k]$ such that, for each vertex $u \in V(\cT)$: 
\begin{itemize}
\item[$(a)$]\label{always:condition} 
there exists a collection $\cA_u \subseteq \cA$ such that $\cA_u':= \big\{ H_{I_u} : H \in \cA_u \big\}$ is a minimal cover of $S_{I_u}$ with $F(\cA'_u) = I_u$, and if $u \in N(v)$, then:\smallskip 
\begin{itemize}
\item[$(i)$] $\cA_u \subseteq \cA_v$;\smallskip
\item[$(ii)$] $F(\cA_u) \subseteq \big\{ i_w : w \prec_\cT v \big\} \cup I_u$;\smallskip
\item[$(iii)$] there exists an element $s_u \in S_{i_v}$ such that $H_{i_v} \in \{s_u,S_{i_v}\}$ for each $H \in \cA_u$.
\end{itemize}
\end{itemize}
Moreover, for each vertex $u \in V(\cT)$, one of the following holds:
\begin{itemize}
\item[$(b)$] 
$u$ is \emph{good}, which means that there exists a collection of hyperplanes $\FF_u \subseteq \AA_u$, with 
\begin{equation} \label{cond:BigFrame} 
|\FF_u| \ge (1 - \eps) \big( |S_{i_u}| - 1 \big),
\end{equation}
such that $i_u \in F(H)$ and $\mu_{I_u \setminus \{i_u\}}(H) > \delta$ for each $H \in \FF_u$.\smallskip
\item[$(c)$]
$u$ is \emph{bad}, which means that there exists a collection of hyperplanes $\cG_u \subseteq \cA_u$, with 
\begin{equation}\label{cond:BigGarbage} 
\sum_{H \in \cG_u} 2^{- |F(H) \cap I_u|/4} \geq \frac{|S_{i_u}|}{\lambda},
\end{equation}
such that $i_u \in F(H)$ and $|F(H) \cap I_u| \ge 2$ for each $H \in \cG_u$. 
\end{itemize}
\end{definition}

We think of the elements of $\FF_u$ (when $u$ is good) and $\cG_u$ (when $u$ is bad) as hyperplanes that (respectively) do and do not look like parts of a frame from the perspective of the vertex~$u$. We will show (see Lemma~\ref{lem:explorationTree}, below) that exploration trees always exist, as long as we choose $\delta$ to be sufficiently small, depending on $\lambda$ and $\eps$. We will then, in Section~\ref{frame:structure:sec}, carefully choose a subtree $T$ of our exploration tree $\cT$, and one `special' vertex for each coordinate $i \in [k]$, with the following three properties: the frames corresponding to good special vertices are disjoint (unless one of the corresponding sets $S_i$ is very small); if `many' special vertices of $T$ are bad, then~$\cA$ fails to satisfy~\eqref{eq:structure:assumption}; and if `almost all' of the special vertices of $T$ are good, then there exists a sufficiently large $\delta$-generalized frame in $\cA$.

The main aim of this section is to prove the following lemma. 

\begin{lemma}\label{lem:explorationTree} 
Let $\lambda,\eps \in (0,1)$, and let $\cA$ be a minimal cover of $S_{[k]}$ with hyperplanes such that $F(\cA) = [k]$. If 
\begin{equation}\label{delta:bound}
0 < \delta < 2^{-9} \lambda^2 \eps^{2\log_2 (1/\lambda \eps) + 11},
\end{equation}
then there exists an $(\lambda,\eps,\delta)$-exploration tree of $\cA$.
\end{lemma}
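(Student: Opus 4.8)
The goal is to construct the exploration tree $\cT$ recursively, building it top-down starting from the root, so that each vertex $u$ we create is equipped with its collection $\cA_u \subseteq \cA$ and its label $(I_u, i_u)$, and is either good or bad. The recursion will maintain the invariant described in condition~$(a)$: at a vertex $u$ we have a set $\cA_u \subseteq \cA$ whose restriction $\cA_u'$ to the coordinates of $I_u$ is a minimal cover of $S_{I_u}$ with $F(\cA_u') = I_u$. At the root we take $I_{\mathrm{root}} = [k]$ and $\cA_{\mathrm{root}}$ a minimal sub-cover of $\cA$ (which covers $S_{[k]}$ with $F = [k]$ by hypothesis); we will have to check that after restricting we can keep $F(\cA_{\mathrm{root}}') = [k]$, but this is immediate since $\cA$ itself covers and has full fixed-coordinate set. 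The real content is the step carried out at a generic vertex.

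\textbf{The step at a vertex $u$.} Given $(\cA_u, I_u)$ with $\cA_u'$ a minimal cover of $S_{I_u}$ and $F(\cA_u') = I_u$, I need to (1) choose a coordinate $i_u \in I_u$ to explore, (2) decide whether $u$ is good or bad and produce the corresponding witness ($\FF_u$ or $\cG_u$), and (3) for each $s$ in the relevant subset of $S_{i_u}$, produce a child $v$ with its own $(\cA_v, I_v)$ satisfying $(i)$–$(iii)$ and the index-tree condition~\eqref{eq:index:tree}. The dichotomy for step~(2) is the heart of the matter. Call a coordinate $i \in I_u$ \emph{rich} if there is a collection $\FF \subseteq \cA_u$ of at least $(1-\eps)(|S_i| - 1)$ hyperplanes $H$ with $i \in F(H)$ and $\mu_{I_u \setminus \{i\}}(H) > \delta$. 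If some $i \in I_u$ is rich, declare $u$ good, set $i_u = i$ and $\FF_u = \FF$. If no coordinate of $I_u$ is rich, I invoke the Lovász Local Lemma — this is exactly the point at which the paper says Lemma~\ref{lem:localLem} is used — to find a coordinate $j$ and a large collection $\cG \subseteq \cA_u$ of hyperplanes each having $j$ as a fixed coordinate and at least two fixed coordinates inside $I_u$, with the weighted sum bound~\eqref{cond:BigGarbage}; declare $u$ bad, $i_u = j$, $\cG_u = \cG$. The constraint~\eqref{delta:bound} on $\delta$ is what makes the Local Lemma computation go through: the bound $\delta < 2^{-9}\lambda^2 \eps^{2\log_2(1/\lambda\eps) + 11}$ is precisely tuned so that when a coordinate fails to be rich, the failure leaves enough ``mass'' on hyperplanes with $\ge 2$ fixed coordinates in $I_u$ that an LLL argument can concentrate it on a single coordinate $j$.

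\textbf{Constructing the children.} Having fixed $i_u$, I apply the ``slicing'' operation of Section~\ref{operation:sec}: for each $s \in S_{i_u}$, restricting $\cA_u$ to the slice $\{x : x_{i_u} = s\}$ (hyperplanes with $H_{i_u} = S_{i_u}$ survive in every slice, those with $H_{i_u} = s$ survive only in slice $s$, the rest die) yields a cover of $S_{I_u \setminus \{i_u\}}$; take a minimal sub-cover $\cA_s$. For a good vertex, the $|\FF_u|$ frame-like hyperplanes already cover all slices except at most $\eps(|S_{i_u}|-1) + 1$ of them trivially (a single hyperplane per omitted slice), so only a bounded number of children need be created; for those slices create a child $v$ with $I_v := F(\cA_s')$ — here I use that $F(\cA_v')$ should equal $I_v$, which holds by construction of $\cA_s$ as a minimal cover — choosing $\cA_v \subseteq \cA_u$ lifting $\cA_s$. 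Conditions $(i)$ ($\cA_v \subseteq \cA_u$) and $(iii)$ (all surviving hyperplanes have $i_u$th coordinate in $\{s, S_{i_u}\}$) are built in; condition $(ii)$, that $F(\cA_v) \subseteq \{i_w : w \prec_\cT u\} \cup I_v$, I verify by induction — a coordinate fixed in some $H \in \cA_v$ but not lying in $I_v$ must have been ``explored'' at $u$ or some ancestor; and~\eqref{eq:index:tree} holds because Lemma~\ref{lem:coveringfixedpoints} (``every coordinate of $I_v \setminus \{i_v\}$ lies in $F(\cA_s')$ for some $s$'') guarantees the union of the children's index sets is exactly $I_v \setminus \{i_v\}$. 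The recursion terminates because $|I_v| < |I_u|$ for every child, so every branch has length at most $k$.

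\textbf{Main obstacle.} The crux, and where I expect essentially all the difficulty to lie, is the Local Lemma step: proving that ``no rich coordinate'' forces the existence of a single coordinate $j$ carrying garbage of total weight $\ge |S_{i_u}|/\lambda$ in the sense of~\eqref{cond:BigGarbage}. The natural approach is to define, for each slice $s \in S_{i_u}$ and each coordinate $\ell \in I_u \setminus \{i_u\}$, a ``bad event'' recording that the minimal cover $\cA_s$ uses $\ell$ as a fixed coordinate in an essential way, set up a dependency graph among these events (two events interact only if the corresponding hyperplanes can overlap, which is controlled by the $\mu$-measures), and use the asymmetric LLL to show that with positive probability — over a random choice of axis point, say — one avoids the configuration that the "no rich coordinate" hypothesis would otherwise rule out, thereby extracting the heavy coordinate $j$. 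Getting the quantitative bookkeeping to close with exactly the threshold~\eqref{delta:bound} — in particular tracking how the factor $2^{-|F(H) \cap I_u|/4}$ arises from iterating the dichotomy $\log_2(1/\lambda\eps)$ times, each iteration costing a factor of $\eps$ and losing a fixed power of $2$ — will be the delicate part, and is presumably the subject of the intervening lemmas (Lemma~\ref{lem:coveringfixedpoints}, Lemma~\ref{lem:localLem}) that the proof of Lemma~\ref{lem:explorationTree} will cite.
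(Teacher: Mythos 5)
Your high-level architecture matches the paper's: build the tree iteratively by slicing, maintain the invariant that each vertex carries a minimal subcover $\cA_u$ with $F(\cA_u')=I_u$, use Lemma~\ref{lem:coveringfixedpoints} for the index-tree condition, and use a good/bad dichotomy where the Local Lemma is invoked when no coordinate is ``rich.'' That is the right skeleton, and your handling of conditions~$(i)$--$(iii)$ and termination is fine (though note the paper creates a child for \emph{every} $s$ with $J(s)\ne\emptyset$, whether $u$ is good or bad, not just a bounded number of them; a frame-like hyperplane with $\mu_{I\setminus\{i\}}(H)>\delta$ can still fix further coordinates, so the resulting slice need not be a leaf).

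The genuine gap is in the Local Lemma step, which you yourself flag as the crux but then sketch incorrectly. The paper does \emph{not} set up bad events indexed by (slice, coordinate) pairs over a random choice of axis point. Instead, for each $i\in I$ it removes the ``special'' elements $s\in S_i$ (those hit by a high-mass hyperplane with $i\in F(H)$ and $\mu_{I\setminus\{i\}}(H)>\delta$) to form $R_i\subseteq S_i$. If some $|R_i|$ is small, that $i$ is rich and $u$ is good. Otherwise the residual family $\cA'$ (all high-mass witnesses deleted) still covers the subproduct $R_I=\prod_i R_i$, because the deleted hyperplanes miss $R_I$ entirely. One then applies the LLL \emph{in the contrapositive}: a random $y\in R_I$ with bad events $E_H=\{y\in H\}$ and weights $x(H)=e^{\eta|F(H)|}\tilde\mu(H)$ (with $\eta=\log 2/4$) would, if the LLL condition~\eqref{equ:noheavydirection} held for every coordinate, leave an uncovered point of $R_I$ — contradiction. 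Hence some coordinate $j$ carries heavy weighted mass, and Lemma~\ref{lem:MassOnRestrictedSpace} converts that into the $2^{-|F(H)|/4}$-weighted bound~\eqref{eq:BigGarbage}. Your sketch omits the restriction to $R_I$ entirely, which is exactly the device that makes the argument work: without first clearing out the high-mass hyperplanes, the LLL inequality has no chance of being informative. Relatedly, your account of where the threshold~\eqref{delta:bound} and the $2^{-|F(H)\cap I_u|/4}$ weight come from is misattributed: the $\log_2(1/\lambda\eps)$ factor is a one-shot bound on $|F(H)|$ for low-mass hyperplanes (from Lemma~\ref{lem:MassOnRestrictedSpace}), and the exponential weight comes directly from the choice $\eta=\log 2/4$ in the LLL, not from ``iterating the dichotomy $\log_2(1/\lambda\eps)$ times.'' As written, your proposal would not close the bad case.
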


Let us fix, for the rest of this section, constants $0 < \lambda, \eps < 1$ and $\delta > 0$ satisfying~\eqref{delta:bound}. We will prove Lemma~\ref{lem:explorationTree} by iteratively extending a `partial' exploration tree by applying the following lemma to a leaf of the current tree. 

\begin{lemma}\label{one:step:exploration}
Let $\emptyset \ne I \subseteq [k]$, and let $\cA$ be a minimal cover of $S_I$ with $F(\cA) = I$. 
\begin{itemize}
\item[$(a)$]
For each $i \in I$, there exists a map $J \colon S_i \to \P(I \setminus \{i\})$, with 
\begin{equation}\label{eq:J:union}
\bigcup_{s \in S_i} J(s) = I \setminus \{i\},
\end{equation}
and for each $s \in S_i$ there exists $\cA_s \subseteq \cA$ such that $\cA_s':= \big\{ H_{J(s)} : H \in \cA_s \big\}$ is a minimal cover of $S_{J(s)}$, $F(\cA_s) \setminus \{i\} = J(s)$, and $H_i \in \{s,S_i\}$ for each $H \in \cA_s$.
\end{itemize}
Moreover, there exists $i \in I$ such that one of the following holds:
\begin{itemize}
\item[$(b)$]
there exists a collection $\FF \subseteq \cA$, with 
$$|\FF| \ge (1 - \eps) \big( |S_i| - 1 \big),$$
such that $i \in F(H)$ and $\mu_{I \setminus \{ i \}}(H) > \delta$ for each $H \in \FF$. \smallskip
\item[$(c)$]
there exists a collection $\cG \subseteq \cA$, with 
\begin{equation}\label{eq:BigGarbage} 
\sum_{H \in \cG} 2^{- |F(H)|/4} \ge \frac{|S_i|}{\lambda},
\end{equation}
such that $i \in F(H)$ and $|F(H)| \ge 2$ for each $H \in \cG$. 
\end{itemize}
\end{lemma}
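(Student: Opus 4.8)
\emph{Overview.} I would treat the two halves of the statement separately: part (a) is a direct ``slicing'' construction using minimality of $\cA$, while the ``moreover'' clause is where the Lov\'asz Local Lemma enters and where essentially all the difficulty lies.

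\emph{Part (a): the slicing construction.} Fix $i\in I$. For each $s\in S_i$, let $\cA^{(s)}:=\{H\in\cA:H_i\in\{s,S_i\}\}$ be the hyperplanes that meet the slice $\{x_i=s\}$; since $\cA$ covers $S_I$, the restrictions $\{H_{I\setminus\{i\}}:H\in\cA^{(s)}\}$ cover $S_{I\setminus\{i\}}$. Pass to a minimal subcover of $S_{I\setminus\{i\}}$, pull it back to a set $\cA_s\subseteq\cA^{(s)}\subseteq\cA$ (one hyperplane per element), and set $J(s):=F\big(\{H_{I\setminus\{i\}}:H\in\cA_s\}\big)$. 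Every hyperplane of this restricted minimal subcover has all of its fixed coordinates inside $J(s)$, so restricting further to $J(s)$ turns it into a minimal cover $\cA_s':=\{H_{J(s)}:H\in\cA_s\}$ of $S_{J(s)}$; the identities $F(\cA_s)\setminus\{i\}=J(s)$ and $H_i\in\{s,S_i\}$ for $H\in\cA_s$ are then immediate. For~\eqref{eq:J:union}, observe that $\bigcup_{s\in S_i}\cA_s$ covers $S_I$ (a point with $i$th coordinate $s$ is covered by $\cA_s$), so by minimality of $\cA$ it equals $\cA$; hence $I=F(\cA)=\bigcup_s F(\cA_s)\subseteq\{i\}\cup\bigcup_s J(s)$, which forces $\bigcup_s J(s)=I\setminus\{i\}$.

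\emph{Part (b)/(c): the set-up.} For each $i\in I$ put $\FF_i:=\{H\in\cA:i\in F(H),\ \mu_{I\setminus\{i\}}(H)>\delta\}$. If $|\FF_i|\ge(1-\eps)(|S_i|-1)$ for some $i$, take $\FF=\FF_i$ and we are in case (b). Otherwise, for each $i$ let $V_i:=\{H_i:H\in\FF_i\}\subseteq S_i$ (so $|V_i|\le|\FF_i|$) and $B_i:=S_i\setminus V_i$; then $|B_i|>1+\eps(|S_i|-1)$, so $|B_i|\ge 2$ and $|B_i|>\eps|S_i|$. Two facts are used constantly: every $H\in\cA$ has $F(H)\ne\emptyset$ (else $\cA=\{H\}$ by minimality, contradicting $F(\cA)=I\ne\emptyset$); and any $H\in\cA$ lying in no $\FF_i$ --- call it \emph{heavy} --- has $\mu_{I\setminus\{i\}}(H)\le\delta<1$ for every $i\in F(H)$, hence $|F(H)|\ge 2$, and moreover the fixed coordinates of $H$ other than any single chosen one have product of sizes at least $1/\delta$. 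The plan is now to produce a single coordinate $i$ and a garbage family $\cG$ as in~\eqref{eq:BigGarbage}, by a probabilistic argument via the Local Lemma. Choose $x\in S_I$ by sampling each coordinate uniformly conditioned on $x_i\in B_i$, i.e.\ $x$ uniform in the box $B:=\prod_{i\in I}B_i$ (positive probability, as each $|B_i|\ge 2$). Any hyperplane $H$ meeting $B$ satisfies $H_i\in B_i$ for all $i\in F(H)$, hence lies in no $\FF_j$, hence is heavy with $|F(H)|\ge 2$; and for such $H$ one has $\Pr_{x\in B}[x\in H]=\prod_{i\in F(H)}|B_i|^{-1}\le 2^{-|F(H)|}$. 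If the weighted Local Lemma --- applied to the events $\{x\in H\}$, where two such events are dependent exactly when $H$ and $H'$ share a fixed coordinate, with weights roughly $w_H\asymp 2^{|F(H)|/4}\Pr_{x\in B}[x\in H]$ (note $w_H<1$) --- showed that $x$ avoids all of these events with positive probability, then $\cA$ would fail to cover $B\subseteq S_I$, a contradiction. Hence the Local Lemma hypothesis must fail; unwinding where and how it fails, there is a coordinate $i$ with $\sum_{H:\,i\in F(H),\,|F(H)|\ge 2}2^{-|F(H)|/4}\ge|S_i|/\lambda$, and the corresponding collection is the desired $\cG$. The smallness of $\delta$ is used precisely to make the heavy hyperplanes spread out (their ``off-$i$'' fixed coordinates have product $\ge 1/\delta$, so the associated $B_j$ are large), which is what controls the dependency degrees.

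\emph{Main obstacle.} The real work is entirely in the Local Lemma computation: getting the dependency-degree estimates quantitatively sharp enough to land on the threshold $|S_i|/\lambda$ in~\eqref{eq:BigGarbage}, and crucially to do so \emph{uniformly} in the sizes $|S_1|,\dots,|S_k|$ so that the resulting $\delta$ depends on $\lambda,\eps$ alone. A naive single application only yields a bound of order $|S_i|$ (not $|S_i|/\lambda$) or forces a truncation threshold that depends on $\prod_j|S_j|$; getting around this seems to require iterating the argument over $O(\log(1/\lambda\eps))$ scales --- discarding the hyperplanes with $|F(H)|$ above a moving threshold, handling those by a separate counting/covering estimate (using that a minimal cover of $S_I$ has at most $|S_I|$ hyperplanes, each with a private point), and re-running the argument on what remains --- which is presumably why the admissible bound~\eqref{delta:bound} on $\delta$ has the shape $\lambda^2\eps^{O(\log(1/\lambda\eps))}$.
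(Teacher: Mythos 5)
Your part (a) is complete and coincides with the paper's argument: slice along coordinate $i$, pass to a minimal subcover of each slice, and use minimality of $\cA$ to force $\bigcup_{s}\cA_s=\cA$, which yields~\eqref{eq:J:union}. The dichotomy you set up for (b)/(c) is also the paper's: either some coordinate already has $(1-\eps)(|S_i|-1)$ ``witness'' hyperplanes with $\mu_{I\setminus\{i\}}(H)>\delta$, or one restricts to the box $B=\prod_i B_i$ (the paper's $R_I$, with $|B_i|\ge\eps(|S_i|-1)+1$), notes that every hyperplane meeting $B$ is heavy with $|F(H)|\ge 2$, and applies the weighted Local Lemma once, with weights $e^{\eta|F(H)|}\tilde\mu(H\cap B)$ and $\eta=\log(2)/4$, concluding that since $B$ \emph{is} covered the hypothesis of Lemma~\ref{lem:localLem} must fail at some coordinate $i$.

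The genuine gap is the step you yourself flag as the main obstacle: converting the failed hypothesis $\sum_{H:\,i\in F(H)}e^{\eta|F(H)|}\tilde\mu(H\cap B)\ge\eta/2$ into $\sum_H 2^{-|F(H)|/4}\ge|S_i|/\lambda$. Your proposed remedy --- iterating the argument over $O(\log(1/\lambda\eps))$ scales with a moving truncation on $|F(H)|$ --- is a misdiagnosis: no iteration is needed, and that is not where the exponent in~\eqref{delta:bound} comes from. The paper instead proves a single pointwise estimate (Lemma~\ref{lem:MassOnRestrictedSpace}): for every heavy $H$ with $i\in F(H)$ one has $\tilde\mu(H\cap B)\le\lambda\, 2^{-|F(H)|/2-4}/|S_i|$. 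Its proof is a dichotomy on $\ell=|F(H)|$ \emph{inside one lemma}: one always has $\tilde\mu(H\cap B)\,|B_i|=\prod_{j\in F(H),\,j\ne i}|B_j|^{-1}\le\mu_{I\setminus\{i\}}(H)\prod_{j}\big(|S_j|/|B_j|\big)\le\delta\,\eps^{-(\ell-1)}$; if the claimed bound failed, comparing with the trivial bound $\prod_{j\ne i}|B_j|^{-1}\le 2^{-(\ell-1)}$ would force $\ell\le 2\log_2(1/\lambda\eps)+10$, and for such $\ell$ the hypothesis $\delta<2^{-9}\lambda^2\eps^{2\log_2(1/\lambda\eps)+11}$ makes $\delta\,\eps^{-(\ell-1)}$ small enough to give the bound after all --- a contradiction. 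This dichotomy is precisely the source of the $\eps^{O(\log(1/\lambda\eps))}$ in~\eqref{delta:bound}. Once the pointwise estimate is in hand, the factor $|S_i|/\lambda$ in~\eqref{eq:BigGarbage} falls out of $(\eta/2)\cdot 2^4/\lambda$, and $e^{\eta\ell}2^{-\ell/2}=2^{-\ell/4}$ produces exactly the weights in the garbage sum. So the architecture of your proof is correct and matches the paper's; what is missing is this one measure-transfer lemma, and the multi-scale scheme you sketch in its place would be an unnecessary detour.
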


Let us fix a set $\emptyset \ne I \subseteq [k]$ until the end of the proof of Lemma~\ref{one:step:exploration}. This section is organised as follows: in Section~\ref{tech:lem:sec} we will prove two (straightforward) technical lemmas, in Section~\ref{operation:sec}, we will introduce the operation that we will use to construct the map $J$ and the families $\cA_s$, and in Section~\ref{exploring:proof:sec}, we will prove Lemma~\ref{one:step:exploration}, and deduce Lemma~\ref{lem:explorationTree}.

\subsection{Two technical lemmas}\label{tech:lem:sec}

{\setstretch{1.12}
Our first technical lemma (Lemma~\ref{lem:localLem}, below) follows from a straightforward application of the Lov\'asz Local Lemma. We will apply it, in the case that there does not exist a collection $\FF \subseteq \cA$ as in Lemma~\ref{one:step:exploration}$(b)$ for any $i \in I$, to a certain subset $\prod_{i \in I} R_i \subseteq S_I$, in order to find an index such that~$(c)$ holds. Our second technical lemma (Lemma~\ref{lem:MassOnRestrictedSpace}, below) will allow us to deduce the bound~\eqref{eq:BigGarbage} from the condition given by the local lemma. Let us say that a hyperplane $H$ in $R_I$ is \emph{non-trivial} if $H \ne R_I$. 


\begin{lemma} \label{lem:localLem}
Let $0 < \eta < 1/5$ and let $\{ R_i : i \in I \}$ be a collection of finite sets, each with at least two elements. Let $\cA$ be a collection of non-trivial hyperplanes in $R_I$, and let $\tilde\mu$ denote the uniform measure on $R_I$. If
\begin{equation} \label{equ:noheavydirection} 
\sum_{H \in \cA \,:\, i \in F(H)} e^{\eta |F(H)|} \tilde\mu(H) < \frac{\eta}{2}
\end{equation} 
for every $i \in [k]$, then $R_I$ is not covered by the hyperplanes in $\cA$. 
\end{lemma}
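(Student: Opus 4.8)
The plan is to apply the symmetric (or weighted) Lov\'asz Local Lemma to the events "a uniformly random point of $R_I$ lies in $H$", as $H$ ranges over $\cA$. Concretely, pick $x = (x_i)_{i \in I} \in R_I$ uniformly at random, and for each non-trivial hyperplane $H \in \cA$ let $B_H$ be the event $\{x \in H\}$, so that $\Prob(B_H) = \tilde\mu(H) = \prod_{i \in F(H)} |R_i|^{-1}$. Since $H$ depends only on the coordinates in $F(H)$, the event $B_H$ is mutually independent of the collection of all $B_{H'}$ with $F(H') \cap F(H) = \emptyset$. Thus it suffices to find weights $w_H > 0$ verifying the usual LLL inequality
\[
\Prob(B_H) \le w_H \prod_{H' : F(H') \cap F(H) \ne \emptyset} (1 - w_{H'}),
\]
for then $\Prob\big(\bigcap_H \overline{B_H}\big) > 0$, which says exactly that some point of $R_I$ is avoided by every hyperplane, i.e.\ $\cA$ does not cover $R_I$.

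The natural choice is $w_H := e^{\eta |F(H)|}\tilde\mu(H)$; note $w_H < 1$ because~\eqref{equ:noheavydirection} forces each individual term to be less than $\eta/2 < 1$. It remains to lower-bound $\prod_{H' : F(H') \cap F(H) \ne \emptyset}(1 - w_{H'})$. I would split the product according to which coordinate $j \in F(H)$ witnesses the intersection: every $H'$ with $F(H') \cap F(H) \ne \emptyset$ has some $j \in F(H) \cap F(H')$, so using $1 - t \ge e^{-2t}$ for $t \in [0,1/2]$ (valid here since each $w_{H'} < \eta/2 < 1/5 < 1/2$),
\[
\prod_{H' : F(H') \cap F(H) \ne \emptyset}(1 - w_{H'}) \ge \exp\Bigg( -2 \sum_{j \in F(H)} \sum_{H' : j \in F(H')} w_{H'} \Bigg) \ge \exp\Big( -2 \sum_{j \in F(H)} \tfrac{\eta}{2} \Big) = e^{-\eta |F(H)|},
\]
where the middle inequality is precisely hypothesis~\eqref{equ:noheavydirection} applied at each $j \in F(H)$ (and the double-counting over $j$ only overestimates the sum of the $w_{H'}$, which is in our favour). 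Combining, $w_H \, e^{-\eta|F(H)|} = \tilde\mu(H) = \Prob(B_H)$, so the LLL inequality holds with room to spare, and the lemma follows.

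The only mild subtlety — and the step I would be most careful with — is the dependency structure: I must make sure I am using a version of the Local Lemma in which $B_H$ need only be independent of the $B_{H'}$ whose index sets are \emph{disjoint} from $F(H)$, rather than independent of each such $B_{H'}$ individually, and correspondingly that the product in the LLL condition runs over \emph{all} $H'$ (including $H$ itself) with $F(H') \cap F(H) \ne \emptyset$; this is exactly the standard statement, and the self-term $H' = H$ contributes the extra factor $(1 - w_H)$, which is harmless since we have a factor of $2$ to spare in~\eqref{equ:noheavydirection} versus the $e^{-t} \le \ldots$ slack. Everything else is a routine computation. (One should also note the harmless index mismatch that~\eqref{equ:noheavydirection} is stated for $i \in [k]$ while $I$ may be a proper subset; coordinates outside $I$ simply do not occur in any $F(H)$, so the hypothesis for $i \in I$ is all that is used.)
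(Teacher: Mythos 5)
Your proposal is correct and follows essentially the same route as the paper's proof: both apply the Lov\'asz Local Lemma to the events $B_H = \{x \in H\}$ with the dependency relation $F(H) \cap F(H') \ne \emptyset$ and the weights $w_H = e^{\eta|F(H)|}\tilde\mu(H)$, and both obtain the product lower bound by double-counting over $j \in F(H)$ and invoking~\eqref{equ:noheavydirection}. The one small difference is how you justify $1 - w_{H'} \ge e^{-2w_{H'}}$: you note directly that each $w_{H'} < \eta/2 < 1/2$ (since it is a single term of a sum bounded by $\eta/2$), whereas the paper derives $w_H \le (e^\eta/2)^{|F(H)|} \le 1 - e^{-1}$ from the crude bound $\tilde\mu(H) \le 2^{-|F(H)|}$; your version is marginally shorter, and both lie comfortably in the range where $1 - t \ge e^{-2t}$ holds.
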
 

\begin{proof}
We choose a point $y \in R_I$ uniformly at random and apply the local lemma. For each hyperplane $H \in \AA$ we define $E_H$ to be the (``bad'') event that $y \in H$. Observe that $\Prob(E_H) = \tilde\mu(H)$, and define a dependency graph $G$ on the events $\{E_H\}_{H \in \AA}$ by setting $E_H \sim E_{H'}$ if $F(H) \cap F(H') \ne \emptyset$. Observe that if $F(H) \cap \big( F(H^{(1)}) \cup \cdots \cup F(H^{(t)}) \big) = \emptyset$, then $E_H$ is independent of the collection $\big\{ E_{H^{(1)}}, \ldots, E_{H^{(t)}} \big\}$, so $G$ is a valid dependency graph. 

Next, we define weights 
$$x(H) = e^{\eta |F(H)|} \tilde\mu(H)$$
for each $H \in \cA$. To apply the local lemma we need to show that
$$\Prob(E_H) \leq x(H) \prod_{E_H \sim E_{H'}} \big( 1 - x(H') \big).$$
To do so, we first claim that $1 - x(H) \ge e^{-2x(H)}$ for every $H \in \cA$. This holds because
$$x(H) = e^{\eta |F(H)|} \tilde\mu(H) \le \big( e^{\eta} / 2 \big)^{|F(H)|} \le \big( 1 - e^{-1} \big)^{|F(H)|} \le 1 - e^{-1},$$ 
where the first inequality is $\tilde\mu(H) \leq 2^{-|F(H)|}$, which holds because each set $S_i$ has at least two elements, the second follows since $\eta < 1/5$, and the third since the hyperplanes in $\cA$ are non-trivial, so $|F(H)| \ge 1$. Therefore, for each $H \in \cA$, we have
\begin{align*} 
\prod_{E_H \sim E_{H'}} \big( 1 - x(H') \big) & \, \ge \exp \bigg( - 2 \sum_{E_H \sim E_{H'}} x(H') \bigg) \ge \exp\bigg( -2 \sum_{i \in F(H)} \sum_{H' \in \cA \,:\, i \in F(H') } x(H') \bigg)\\
& \, = \exp\bigg( -2 \sum_{i \in F(H)} \sum_{H' \in \cA \,:\, i \in F(H') } e^{\eta |F(H')|} \tilde\mu(H') \bigg) \geq \exp\big( -\eta |F(H)| \big),
\end{align*}
where the last inequality follows from~\eqref{equ:noheavydirection}. This implies that
$$x(H) \prod_{E_H \sim E_{H'}} \big( 1 - x(H') \big) \ge x(H) e^{-\eta |F(H)|} = \tilde\mu(H) = \Prob(E_H),$$
as required. By the local lemma, it follows that the probability that none of the events $E_H$ holds is non-zero, and hence there exists a point $y \in R_I$ that is not covered by $\cA$. 
\end{proof}
}

The second technical lemma is even more straightforward. Recall that $S_1,\ldots,S_k$ are fixed finite sets with at least two elements, and that the (non-empty) set $I \subseteq [k]$ and positive constants $\lambda$ and $\eps$ were fixed above.}

{\setstretch{1.09}

\begin{lemma} \label{lem:MassOnRestrictedSpace}
For each $j \in I$, let $R_j \subseteq S_j$ be such that $|R_j| \ge \eps \big( |S_j| - 1 \big) + 1$, and let $\tilde{\mu}$ denote the uniform measure on $R_I$. Let $H$ be a hyperplane in $S_I$, and let $i \in F(H)$. If  
$$\mu_{I \setminus \{ i \}}(H) \le 2^{-9} \lambda^2 \eps^{2\log_2 (1/\lambda \eps) + 11},$$
then
\begin{equation}\label{eq:SmallMassOnSubsets} 
\tilde{\mu}\big( H \cap R_I \big) \le \frac{\lambda}{2^{|F(H)|/2 + 4}|S_i|}.
\end{equation}
\end{lemma}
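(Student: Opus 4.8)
The plan is to estimate $\tilde\mu(H \cap R_I)$ by splitting the fixed coordinates of $H$ into ``large'' and ``small'' classes relative to a threshold that we will choose to be roughly $\eps^{-O(\log(1/\lambda\eps))}$, and then to bound the contribution of each class separately. Write $F = F(H)$. On each coordinate $j \in F$ the restriction $H_j$ is a singleton, so $\tilde\mu(H \cap R_I) = \prod_{j \in F} |R_j \cap H_j|/|R_j| \le \prod_{j \in F} 1/|R_j|$; in particular this product is $0$ unless the single fixed value of $H$ lies in every $R_j$, in which case it equals $\prod_{j \in F} |R_j|^{-1}$. The hypothesis $|R_j| \ge \eps(|S_j| - 1) + 1 \ge \tfrac{\eps}{2}|S_j|$ (using $|S_j| \ge 2$) lets us replace each factor $|R_j|^{-1}$ by $\tfrac{2}{\eps}|S_j|^{-1}$, so
\[
\tilde\mu(H \cap R_I) \;\le\; \Big(\tfrac{2}{\eps}\Big)^{|F|} \prod_{j \in F} |S_j|^{-1} \;=\; \Big(\tfrac{2}{\eps}\Big)^{|F|} \,\mu_F(H).
\]

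Now I would relate $\mu_F(H)$ to the quantity $\mu_{I\setminus\{i\}}(H)$ that appears in the hypothesis. Since $i \in F$, we have $\mu_{I\setminus\{i\}}(H) = \mu_F(H) \cdot |S_i|$ (the coordinate $i$ contributes a factor $|S_i|^{-1}$ to $\mu_F$ but nothing to $\mu_{I\setminus\{i\}}$ on the fixed part, while the free coordinates contribute $1$ to both), so $\mu_F(H) = \mu_{I\setminus\{i\}}(H)/|S_i|$. Combining, $\tilde\mu(H\cap R_I) \le (2/\eps)^{|F|}\,\mu_{I\setminus\{i\}}(H)/|S_i|$. Comparing with the target bound \eqref{eq:SmallMassOnSubsets}, it therefore suffices to show
\[
\Big(\tfrac{2}{\eps}\Big)^{|F|} \mu_{I\setminus\{i\}}(H) \;\le\; \frac{\lambda}{2^{|F|/2 + 4}},
\qquad\text{i.e.}\qquad
\mu_{I\setminus\{i\}}(H) \;\le\; \frac{\lambda}{2^4}\cdot\Big(\tfrac{\eps}{2\sqrt2}\Big)^{|F|}.
\]
If $|F|$ is bounded — say $|F| \le L$ where $L := 2\log_2(1/\lambda\eps) + 11$ or so — then the right-hand side is at least $2^{-4}\lambda (\eps/2\sqrt2)^{L}$, which is exactly the order of the hypothesised upper bound on $\mu_{I\setminus\{i\}}(H)$ after one checks the constants, so the inequality holds by a direct (if slightly tedious) comparison of exponents.

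The one case that does not follow from this crude bound is when $|F|$ is large, since then $(\eps/2\sqrt2)^{|F|}$ is tiny and the displayed inequality is not obviously implied by the fixed hypothesis on $\mu_{I\setminus\{i\}}(H)$. The key extra observation handling this regime is that $\mu_F(H) \le 2^{-|F|}$ trivially (each fixed coordinate has $|S_j|\ge 2$), hence $\mu_{I\setminus\{i\}}(H) = |S_i|\,\mu_F(H) \le |S_i| 2^{-|F|}$; but more usefully, starting again from $\tilde\mu(H\cap R_I)\le \prod_{j\in F}|R_j|^{-1}$ and using $|R_j|\ge 2$ for the ``small'' coordinates while using $|R_j| \ge \tfrac\eps2|S_j|$ only for coordinate $i$ and the ``large'' coordinates, one gets a bound of the shape $\tilde\mu(H\cap R_I) \le 2^{-(|F|-1)/2}\cdot(\text{stuff involving }\mu_{I\setminus\{i\}}(H))$, and the geometric decay in $|F|$ then beats the fixed-threshold hypothesis for all large $|F|$. \textbf{The main obstacle}, and the only real content, is bookkeeping the constants so that the single threshold $2^{-9}\lambda^2\eps^{2\log_2(1/\lambda\eps)+11}$ works simultaneously in both regimes; the natural way to organise this is to pick the crossover value of $|F|$ to be precisely $\log_2(1/\lambda\eps) + O(1)$, verify the small-$|F|$ case by monotonicity of $(\eps/2\sqrt2)^{|F|}$ down to that value, and verify the large-$|F|$ case using the extra factor $2^{-|F|/2}$, choosing the numerical constants $2^{-9}$ and the ``$+11$'' so both verifications close.
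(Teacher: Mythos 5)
Your overall strategy — bound $\tilde\mu(H \cap R_I)$ by $\prod_{j \in F(H)} |R_j|^{-1}$, then interpolate between the ``precise'' bound $|R_j| \gtrsim \eps|S_j|$ (which brings in the hypothesis on $\mu_{I\setminus\{i\}}(H)$) and the ``crude'' bound $|R_j| \ge 2$ (which gives geometric decay in $|F(H)|$) — is exactly the content of the paper's proof. The paper packages the two regimes as a proof by contradiction rather than an explicit case split (assume~\eqref{eq:SmallMassOnSubsets} fails, use $|R_j| \ge 2$ to deduce $|F(H)| \le 2\log_2(1/\lambda\eps)+10$, then plug that bound into the precise estimate to recover~\eqref{eq:SmallMassOnSubsets}), which is cosmetically cleaner but logically the same.

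There is, however, a concrete error in your sketch that would prevent it from closing: the crossover value of $|F(H)|$ is not $\log_2(1/\lambda\eps)+O(1)$ but $2\log_2(1/\lambda\eps)+O(1)$. To see why, note that the large-$|F|$ case (using $\prod_{j \in F, j\ne i}|R_j|^{-1}\le 2^{-(|F|-1)}$ and $|R_i|\ge\eps|S_i|$) requires $2^{5-|F|/2}\le\lambda\eps$, i.e.\ $|F|\ge 10+2\log_2(1/\lambda\eps)$. Dually, the small-$|F|$ case requires $\delta_0 \le \tfrac{\lambda}{16}(\eps/\sqrt2)^{|F|}$, and plugging in $\delta_0=2^{-9}\lambda^2\eps^{2\log_2(1/\lambda\eps)+11}$ shows this holds exactly up to $|F|\le 10+2\log_2(1/\lambda\eps)$. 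With your stated crossover at $\log_2(1/\lambda\eps)+O(1)$, the range $\log_2(1/\lambda\eps) \ll |F| \ll 2\log_2(1/\lambda\eps)$ is handled by neither estimate. Two further minor points: you lose an unnecessary factor of $2^{|F|}$ by using $|R_j|\ge\tfrac\eps2|S_j|$ rather than the available $|R_j|\ge\eps(|S_j|-1)+1 \ge \eps|S_j|$, and by applying that lossy bound to the coordinate $i$ as well (the paper keeps $|R_i|$ in the denominator until the very last step); and in the large-$|F|$ regime there is no need to further subdivide the fixed coordinates into ``small'' and ``large'' — one simply uses $|R_j|\ge2$ for all $j\in F(H)\setminus\{i\}$.
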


\begin{proof} 
Set $\ell := |F(H)|$ and $\delta_0 := 2^{-9}\lambda^2 \eps^{2\log_2 (1/\lambda \eps) + 11}$, and observe that 
$$\tilde{\mu}\big( H \cap R_I \big) \cdot |R_i| \, \le \, \mu_{I \setminus \{i\}} (H) \prod_{j \in F(H), \, i \ne j} \frac{|S_j|}{|R_j|} \, \le \, \delta_0 \eps^{-(\ell-1)}.$$
Now, note that $|R_j| \ge 2$ for every $j \in I$, and suppose that~\eqref{eq:SmallMassOnSubsets} does not hold. Then 
$$2^{-(\ell-1)} \ge \prod_{j \in F(H), \, j \ne i} \frac{1}{|R_j|} \, = \, \tilde{\mu}\big( H \cap R_I \big) \cdot |R_i| \, \ge \, \frac{\lambda}{2^{\ell/2 + 4}} \cdot \frac{|R_i|}{|S_i|} \, \ge \, \frac{\lambda \eps}{2^{\ell/2 + 4}},$$
and hence $\ell \le 2\log_2(1/\lambda \eps) + 10$. It follows that
$$2^{\ell/2 + 4} \cdot \tilde{\mu}\big( H \cap R_I \big) \le \, \frac{2^{9/2} \delta_0}{|R_i|} \bigg( \frac{\sqrt{2}}{\eps} \bigg)^{\ell-1} \le \, \frac{2^{9/2} \delta_0}{|R_i|} \bigg( \frac{\sqrt{2}}{\eps} \bigg)^{2\log_2(1/\lambda \eps) + 9} = \,  \frac{\lambda \eps}{|R_i|}  \, \le \,  \frac{\lambda}{|S_i|},$$
as required.
\end{proof}

\subsection{An operation on a covering system}\label{operation:sec}

We next introduce a simple operation that, given a minimal cover of $S_I$, produces a map $J$ and a collection $\{ \cA_s : s \in S_i \}$ as required by Lemma~\ref{one:step:exploration}$(a)$. This operation is the basic tool we will use in the construction of our exploration trees. Recall that the (non-empty) set $I \subseteq [k]$ was fixed above, and let $\cA$ be a minimal cover of $S_I$ with $F(\cA) = I$. For each $i \in I$ and $s \in S_i$, set 
$$\cH(i,s) := \big\{ H \in \cA : H_i \in \{ s, S_i \} \big\},$$
and observe that the collection $\cH'(i,s) := \big\{ H_{I \setminus \{i\}} : H \in \cH(i,s) \big\}$ is a cover of $S_{I \setminus \{i\}}$. Note that moreover, since $\cA$ is minimal, there is a bijection between $\cH(i,s)$ and $\cH'(i,s)$. Let $\cA^*_s \subseteq \cH'(i,s)$ be an arbitrary minimal subcover of $S_{I \setminus \{i\}}$, and define 
$$J(s) := F(\cA^*_s) \qquad \text{and} \qquad \cA_s := \big\{ H \in \cH(i,s) : H_{I \setminus \{i\}} \in \cA^*_s \big\}.$$ 
Note that $\cA_s' = \big\{ H_{J(s)} : H \in \cA_s \big\}$ is a minimal cover of $S_{J(s)}$, that $F(\cA_s) \setminus \{i\} = J(s)$, and that $H_i \in \{s,S_i\}$ for each $H \in \cA_s$. 
To verify that $J$ and $\{ \cA_s : s \in S_i \}$ satisfy Lemma~\ref{one:step:exploration}$(a)$, it therefore only remains to check that~\eqref{eq:J:union} holds.  

\begin{lemma}\label{lem:coveringfixedpoints}  
Let $\cA$ be a minimal cover of $S_I$ with hyperplanes. If $F(\cA) = I$, then 
$$\bigcup_{s \in S_i} J(s) = I \setminus \{i\}$$
for each $i \in I$. 
\end{lemma}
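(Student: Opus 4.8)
The plan is to prove the contrapositive-style statement directly: show that if some coordinate $j \in I \setminus \{i\}$ fails to lie in $F(\cA^*_s)$ for every $s \in S_i$, then $\cA$ was not a cover of $S_I$ after all (or not minimal), contradicting our hypothesis. Fix $i \in I$ and suppose $j \in I \setminus \{i\}$ with $j \notin J(s) = F(\cA^*_s)$ for all $s \in S_i$. Since $\cA^*_s$ is a minimal cover of $S_{I \setminus \{i\}}$ and $j \notin F(\cA^*_s)$, every hyperplane in $\cA^*_s$ has its $j$th coordinate equal to $S_j$, i.e. is ``free'' in coordinate $j$. The aim is to exploit this to exhibit an element of $S_I$ that is missed by $\cA$.

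First I would translate the failure of $j$ to be fixed into a statement about $\cA$ itself. By the bijection between $\cH(i,s)$ and $\cH'(i,s)$ coming from minimality, the hyperplanes $H \in \cH(i,s)$ whose restriction lies in $\cA^*_s$ are exactly $\cA_s$, and each such $H$ has $H_j = S_j$. Now I would use the hypothesis $F(\cA) = I$: since $j \in I = F(\cA)$, there is some hyperplane $H^\star \in \cA$ with $j \in F(H^\star)$, say $H^\star_j = \{a\}$ for some $a \in S_j$. The key observation is that $H^\star$ cannot belong to any $\cA_s$ (as those are free in coordinate $j$), but it might still belong to $\cH(i,s) \setminus \cA_s$, i.e. it could have been discarded when passing to the minimal subcover $\cA^*_s$. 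So the argument must show that discarding all such $H^\star$ simultaneously (over all $s$) still leaves a cover of a suitable slice — this is where I would invoke minimality of $\cA^*_s$ as a \emph{minimal} subcover of $\cH'(i,s)$.

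The cleanest route: consider the slice $T := \{ y \in S_I : y_j = a \}$, which we identify with $S_{I \setminus \{j\}}$, and the induced cover of $T$ by $\{ H \cap T : H \in \cA,\ H \cap T \ne \emptyset \}$. I want to find a point of $T$ not covered by $\cA$. Pick any element and look at its $i$th coordinate $s$; the hyperplanes of $\cA$ that could cover it and have $H_i \in \{s, S_i\}$ are exactly those in $\cH(i,s)$, and after projecting away coordinate $i$ they cover $S_{I \setminus \{i\}}$; but $\cA^*_s$ is already a minimal subcover, so $\cH'(i,s) \setminus \cA^*_s$ adds nothing essential — more precisely, for each hyperplane $H \in \cA^*_s$ we have $H_j = S_j$, hence $H_{I \setminus \{i\}}$ restricted to the slice $\{y_j = a\}$ equals $H_{I \setminus \{i,j\}}$, and these still cover $S_{I \setminus \{i,j\}}$. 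The point is then to run this over all $s \in S_i$: the collection $\bigcup_{s} \{ H_{I \setminus \{j\}} : H \in \cA_s \}$, together with the ``free in $i$'' hyperplanes among them, should cover $S_{I \setminus \{j\}}$ if $\cA$ covers $T$; but since no hyperplane in any $\cA_s$ actually constrains coordinate $j$, the same collection would cover \emph{all} of $S_I$ with coordinate $j$ unconstrained. That would make $\cA_s$'s union a proper subcover of $\cA$ omitting $H^\star$ while still covering $S_I$ on coordinate $i = s$, contradicting minimality of $\cA$ (which forces $H^\star$ to be needed, i.e. some point with $j$th coordinate $a$ is covered \emph{only} by hyperplanes fixing coordinate $j$, hence by something outside all $\cA_s$). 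Unwinding this contradiction yields an uncovered point, completing the proof.

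I expect the main obstacle to be bookkeeping the interaction between the two projections (killing coordinate $i$ to form $\cA^*_s$, and slicing coordinate $j = a$) and making the ``minimality of $\cA^*_s$ as a subcover'' do real work: one must be careful that a hyperplane fixing coordinate $j$ to a value $\ne a$ is irrelevant on the slice $T$, while a hyperplane fixing $j$ to $a$, if present in $\cH'(i,s)$, would have been eligible for $\cA^*_s$ and so its absence from $\cA^*_s$ means $\cA^*_s$ covers $S_{I \setminus \{i\}}$ without it — in particular covers the sub-slice $\{y_j = a\}$ of $S_{I \setminus \{i\}}$ using only hyperplanes free in $j$. Pushing this through for every $s \in S_i$ and then reassembling into a statement about $\cA$ covering $S_I$ (and hence a contradiction with minimality via $H^\star$) is the delicate part; everything else is routine.
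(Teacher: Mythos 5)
Your core idea is sound and is a genuinely different route from the paper's. The paper proves the stronger claim that every $H \in \cA$ lies in some $\cA_s$, by taking a point $x$ covered \emph{only} by $H$ (which exists by minimality), setting $s := x_i$, and observing that $H_{I\setminus\{i\}}$ is the only element of $\cH'(i,s)$ that can cover the projection of $x$, hence must survive into $\cA^*_s$. Your approach instead exploits the other face of minimality: since $\cA_s$ covers the slice $\{y \in S_I : y_i = s\}$ for every $s$, the union $\bigcup_{s} \cA_s \subseteq \cA$ already covers all of $S_I$, and minimality forces $\bigcup_{s} \cA_s = \cA$; under your contrapositive hypothesis the hyperplane $H^\star$ (which fixes coordinate $j$) would lie in $\cA$ but in no $\cA_s$, yielding a proper subcover and hence a contradiction. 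Both arguments use minimality and both are correct.

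That said, your writeup is considerably more involved than it needs to be. The entire detour through the slice $T = \{y : y_j = a\}$ and the double projection (away from $i$, then restricting $j = a$) is a red herring: you never actually need it, because the fact that $\bigcup_s \cA_s$ covers $S_I$ follows in one line (pick $y$, set $s := y_i$, some $H' \in \cA^*_s$ covers $y_{I\setminus\{i\}}$, and the corresponding $H \in \cA_s$ covers $y$). At one point you even say you ``want to find a point of $T$ not covered by $\cA$,'' which cannot happen since $\cA$ is a cover of $S_I$; what you mean is a point not covered by $\bigcup_s \cA_s$, and the resolution is not that such a point exists but that $\bigcup_s \cA_s$ is a proper subcover, which already contradicts minimality without exhibiting any uncovered point. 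Finally, once you see that $\bigcup_s \cA_s = \cA$, there is no need for the contrapositive framing at all: $I = F(\cA) = \bigcup_s F(\cA_s)$, and since $F(\cA_s) \subseteq J(s) \cup \{i\}$, it follows directly that $I \setminus \{i\} \subseteq \bigcup_s J(s)$. The reverse inclusion is immediate, so you are done.
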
 

\begin{proof}
We will in fact show that for every $H \in \cA$, there exists $s \in S_i$ with $H_{I \setminus \{i\}} \in \cA^*_s$. Since $J(s) = F(\cA^*_s) \subseteq I \setminus \{i\}$ and $F(\cA) = I$, this will be enough to prove the lemma.

To prove the claim, let $x \in S_I$ be an element that is only covered by $H$ (recall that $\cA$ is minimal),  and set $s := x_i$. We claim that $H_{I \setminus \{i\}} \in \cA^*_s$. Indeed, since $\cA^*_s$ is a cover of $S_{I \setminus \{i\}}$, it must cover the vector $x'$ obtained from $x$ by ignoring the $i$th coordinate, and $H_{I \setminus \{i\}}$ is the only potential element of $\cA^*_s$ that can do so. 
\end{proof}
}

\subsection{Construction of the exploration tree}\label{exploring:proof:sec}

Having completed our preparations, we are now ready to prove Lemma~\ref{one:step:exploration}, and deduce Lemma~\ref{lem:explorationTree}.

\begin{proof}[Proof of Lemma~\ref{one:step:exploration}]
Let $\emptyset \ne I \subseteq [k]$, and let $\cA$ be a minimal cover of $S_I$ such that $F(\cA) = I$. To prove part~$(a)$,  for each $i \in I$ we apply the construction defined in Section~\ref{operation:sec} to obtain a map $J \colon S_i \to \P(I \setminus \{i\})$ and a collection $\{ \cA_s : s \in S_i \}$, where $\cA_s \subseteq \cA$, as required. In particular, $\cA_s' = \big\{ H_{J(s)} : H \in \cA_s \big\}$ is a minimal cover of $S_{J(s)}$, $F(\cA_s) \setminus \{i\} = J(s)$, $H_i \in \{s,S_i\}$ for each $H \in \cA_s$, and the map $J$ satisfies~\eqref{eq:J:union} by Lemma~\ref{lem:coveringfixedpoints}.

To prove that there exists $i \in I$ such that either~$(b)$ or~$(c)$ holds, let us define an element $s \in S_i$ to be \emph{special for $i$} if there exists a hyperplane $H \in \cA$ such that 
$$H_i = s \qquad \text{and} \qquad \mu_{I \setminus \{i\}}(H) > \delta.$$ 
If this holds, then we say that the hyperplane $H$ is a \emph{witness} for the pair $(s,i)$. Now, for each $i \in I$ define $S_i^*$ to be the set of elements $s \in S_i$ that are special for~$i$, and set $R_i := S_i \setminus S^*_i$. We consider two cases, corresponding to conditions~$(b)$ and~$(c)$ of Definition~\ref{def:exploration:tree}, respectively.

\medskip
\noindent \textbf{Case 1:} There exists $i \in I$ with $|R_i| < \eps \big( |S_i| - 1 \big) + 1$.
\medskip

In this case we define 
$$\FF := \bigcup_{s \in S^*_i} \big\{ H \in \cA : H \text{ is a witness for } (s,i) \big\}.$$
Since a hyperplane $H$ cannot witness $(s,i)$ for more than one element $s \in S^*_i$, we have 
$$|\FF| \geq |S^*_i| \geq |S_i| - \eps \big( |S_i| - 1 \big) - 1 = (1-\eps)\big( |S_i| - 1 \big),$$ 
and by definition $i \in F(H)$ and $\mu_{I \setminus \{i\}}(H) > \delta$ for each $H \in \FF$.

\medskip
\noindent \textbf{Case 2:} $|R_i| \ge \eps \big( |S_i| - 1 \big) + 1$ for every $i \in I$. 
\medskip

In this case we shall apply Lemma~\ref{lem:localLem} to the set $R_I := \prod_{i \in I} R_i$ with $\eta = 1/6$. Define $\cA' \subseteq \cA$ by removing all hyperplanes that are witness for $(s,i)$ for some $i \in I$ and $s \in S_i$. Observe that none of the witness hyperplanes intersects $R_I$, so $\cA'':= \{ H \cap R_I : H \in \cA'\}$ is a cover of $R_I$. We claim that there exists a coordinate $i \in I$ such that 
\begin{equation}\label{equ:HeavyCoordinate} 
\sum_{H \in \cA' \,:\, i \in F(H)} e^{\eta |F(H)|} \tilde{\mu}\big( H \cap R_I \big) \geq \frac{\eta}{2},
\end{equation} 
where $\tilde{\mu}$ denotes the uniform measure on $R_I$. Since $\cA''$ is a cover of $R_I$, and noting that $|R_i| \ge 2$ for every $i \in I$ (by assumption, and since $|S_i| \ge 2$), this will follow from Lemma~\ref{lem:localLem} if we show that $\cA''$ is a collection of non-trivial hyperplanes in $R_I$. To do so, suppose for a contradiction that $R_I \subseteq H$ for some $H \in \cA'$, and observe that therefore $F(H) \cap I = \emptyset$, and hence also $S_I \subseteq H$. However, since $\cA$ is a minimal cover of $S_I$, this implies that $\cA = \{H\}$, and hence $F(\cA) = \emptyset$. This contradicts our assumption that $F(\cA) = I$, and thus each hyperplane in $\cA''$ is indeed non-trivial. 
As observed above, it follows by Lemma~\ref{lem:localLem} 
that~\eqref{equ:HeavyCoordinate} holds for some $i \in I$, as claimed.

Fix such an $i \in I$, and define $\cG := \cG^{(1)} \cup \cdots \cup \cG^{(k)}$, where
$$\cG^{(\ell)} := \big\{ H \in \cA' \,:\, i \in F(H) \textup{ and } |F(H)| = \ell \big\}$$
for each $\ell \in [k]$. Observe that $\cG^{(1)} = \emptyset$, since if $H \in \cA$ and $F(H) = \{i\}$, then $\mu_{I \setminus \{i\}}(H) = 1$, and so $H$ would have been removed when we formed $\cA'$. Similarly, for each $H \in \AA'$ with $i \in F(H)$ we have 
$$\mu_{I \setminus \{i\}}(H) \le \delta < 2^{-9} \lambda^2 \eps^{2\log_2 (1/\lambda \eps) + 11},$$ 
since otherwise $H$ would witness $(s,i)$ for some $s \in S_i$, and so would been removed when we formed $\cA'$. It follows, by Lemma~\ref{lem:MassOnRestrictedSpace}, that
$$\tilde{\mu}\big( H \cap R_I \big) \le \frac{\lambda}{2^{|F(H)|/2 + 4} |S_i|}$$
for every $H \in \cA'$ with $i \in F(H)$. Finally, combining this with~\eqref{equ:HeavyCoordinate} gives 
$$\sum_{\ell \geq 2} |\cG^{(\ell)}| \frac{e^{\eta \ell}}{2^{\ell/2}} = \sum_{H \in \cA' \,:\, i \in F(H)} \frac{e^{\eta |F(H)|}}{2^{|F(H)|/2} } \ge \sum_{H \in \cA' \,:\, i \in F(H)} e^{\eta|F(H)|} \tilde\mu(H\cap R_I)\frac{16|S_i|}{\lambda} \ge \frac{8\eta |S_i|}{\lambda},$$
and hence
$$\sum_{H \in \cG} 2^{- |F(H)|/4} = \sum_{\ell \geq 2} \frac{|\cG^{(\ell)}|}{2^{\ell/4}} = \sum_{\ell \geq 2} |\cG^{(\ell)}| \frac{e^{\eta \ell}}{2^{\ell/2}} \ge \frac{8\eta |S_i|}{\lambda} \ge \frac{|S_i|}{\lambda},$$
as required by~\eqref{eq:BigGarbage}.
\end{proof}

\enlargethispage*{\baselineskip}

The deduction of Lemma~\ref{lem:explorationTree} is now straightforward. Let $\p(T)$ denote the set of vertices of a rooted tree $T$ with no out-neighbours, and call $\p(T)$ the \emph{boundary} of $T$. 

\begin{proof}[Proof of Lemma~\ref{lem:explorationTree}]
We construct $\cT$, our exploration tree, inductively, with Lemma~\ref{one:step:exploration} providing the induction step. We begin our induction by defining $T_0$ to be a single vertex $v$, and setting $I_v := [k]$, and $\cA_v := \cA$. 
For the induction step, suppose that we have constructed a rooted tree $T_t$ (with root~$v$), a set $\emptyset \ne I_u \subseteq [k]$ and a collection $\cA_u \subseteq \cA$ for each vertex $u \in V(T_t)$, and an index $i_u \in I_u$ for each non-boundary vertex $u \in V(T_t) \setminus \p(T_t)$, such that condition~$(a)$ of Definition~\ref{def:exploration:tree} holds for every vertex $u \in V(T_t)$, and condition~$(ii)$ of Definition~\ref{def:index:tree}, and either condition~$(b)$ or~$(c)$ of Definition~\ref{def:exploration:tree}, hold for every non-boundary vertex $u \in V(T_t) \setminus \p(T_t)$. Observe that, since $\cA$ is a minimal cover of $S_{[k]}$ with hyperplanes such that $F(\cA) = [k]$, these conditions are satisfied in the base case $t = 0$.

To construct $T_{t+1}$, choose a vertex $u \in \p(T)$ such that $|I_u| \ge 2$, if one exists (we will deal with the other case below), and apply Lemma~\ref{one:step:exploration} to the set $I_u$ and minimal cover $\cA'_u = \{ H_{I_u} : H \in \cA_u \}$ of $S_{I_u}$ (noting that $F(\cA'_u) = I_u$, by the induction hypothesis). We obtain an index $i \in I_u$, a map $J \colon S_i \to \P(I \setminus \{i\})$ and a collection $\{ \cA_s : s \in S_i \}$ as in part~$(a)$ of the lemma, and either a collection $\FF \subseteq \cA'_u$ as in part~$(b)$, or a collection $\cG \subseteq \cA'_u$ as in part~$(c)$. In either case, we set $i_u := i$, add an out-neighbour of $u$ for each element $s \in S_i$ such that $J(s) \ne \emptyset$, and to the new vertex $w(s)$ corresponding to $s$ we assign the set $I_{w(s)} = J(s)$, the element $s_{w(s)} := s$, and the collection of hyperplanes 
$$\cA_{w(s)} := \big\{ H \in \cA_u : H_{I_u} \in \cA_s \big\}.$$ 

Observe first that condition~$(a)$ of Definition~\ref{def:exploration:tree} holds for each vertex $w(s)$. Indeed, we have $\cA_{w(s)} \subseteq \cA_u \subseteq \cA$, and it follows from Lemma~\ref{one:step:exploration}$(a)$ that the family 
$$\cA_{w(s)}' = \{ H_{J(s)} : H \in \cA_{w(s)} \} = \{ H_{J(s)} : H \in \cA_s \}$$ 
is a minimal cover of $S_{J(s)}$ with $F(\cA'_{w(s)}) = F(\cA_s) \setminus \{i\} = J(s)$, that $H_{i_u} \in \{s,S_{i_u}\}$ for each $H \in \cA_{w(s)}$, and that
$$F(\cA_{w(s)}) \subseteq \big( F(\cA_u) \setminus I_u \big) \cup F(\cA_s) \subseteq \big\{ i_w : w \prec_\cT u \big\} \cup J(s).$$
Note also that, by~\eqref{eq:J:union}, $u$ satisfies condition~$(ii)$ of Definition~\ref{def:index:tree}.

To complete the induction step, it remains to observe that $u$ is either good or bad, i.e., satisfies either condition~$(b)$ or~$(c)$ of Definition~\ref{def:exploration:tree}. Indeed, if Lemma~\ref{one:step:exploration}$(b)$ holds then  set 
$$\FF_u := \big\{ H \in \cA_u : H_{I_u} \in \FF \big\},$$ 
and if Lemma~\ref{one:step:exploration}$(c)$ holds then set 
$$\cG_u := \big\{ H \in \cA_u : H_{I_u} \in \cG \big\}.$$
In each case, the properties guaranteed by the lemma are exactly those that we require. Since the properties required of all vertices of $T_{t+1}$ other than $u$ and its out-neighbours continue to hold, it follows that $T_{t+1}$ satisfies the same properties that we assumed for $T_t$. 

Finally, observe that in passing from $T_t$ to $T_{t+1}$ we replace a vertex of the boundary by a finite number of boundary elements, each associated with with strictly smaller sets. This process must therefore eventually end, and when it does, it follows that $|I_u| = 1$ for every boundary vertex $u \in \p(T_t)$. When this happens, we simply set $i_u$ equal to the unique member of $I_u$ for each $u \in \p(T_t)$, and claim that $u$ is good. Indeed, by the induction hypothesis, the collection $\cA'_u$ forms a minimal cover of $S_{i_u}$ with $i_u \in F(\cA'_u)$. It follows that $\cA'_u$ consists of exactly $|S_{i_u}|$ singleton hyperplanes, and so~\eqref{cond:BigFrame} holds with $\FF_u := \cA_u$. Since condition~$(ii)$ of Definition~\ref{def:index:tree} holds automatically (with both sides equal to the empty set), it follows that the tree $\cT$ that we have constructed is an $(\lambda,\eps,\delta)$-exploration tree of $\cA$, as required.
\end{proof}

\section{Extraction of the frame, and the proof of Theorem~\ref{thm:mainStructureThm}}\label{frame:structure:sec}

\enlargethispage*{\baselineskip}

In order to prove Theorem~\ref{thm:mainStructureThm}, we will use the exploration tree $\cT$ constructed in the previous section, together with the bound~\eqref{eq:structure:assumption}, to find a $\delta$-generalized frame for $\cA$. Roughly speaking, we would like to do this by choosing $k$ vertices $\beta(1), \ldots, \beta(k)$ such that the label of $\beta(i)$ in $\cT$ is $i$, define a tree $T$ to be the union of the paths (in $\cT$) from $\beta(i)$ to the root, and define the elements $s_j(i)$ using the elements $s_u$. 
For each good vertex $\beta(i)$ we have a collection $\FF_i$ of hyperplanes more or less as required, and for each bad vertex we obtain a large collection of `garbage' hyperplanes. We might therefore hope to use~\eqref{eq:structure:assumption} to show that there are few bad vertices, and thus to deduce the bound~\eqref{eq:structure:conclusion}. 

There are two main problems with the strategy described above: the frame elements obtained for good vertices might not be disjoint, and each hyperplane might be included in the garbage set $\cG_u$ for a very large number of bad vertices. We overcome both obstacles in the same way: by choosing the vertices $\beta(i)$ via a depth-first search algorithm. We do not expect the reader to be able to immediately see why this choice should help in either case, but it turns out that proving that it does is (in both cases) surprisingly simple. 

In Section~\ref{delta:tree:def:sec} we will state precisely the object we will construct, and show that its existence implies the existence of a $\delta$-generalized frame. In Section~\ref{depth:first:sec} we will describe how we choose the sub-tree $T \subseteq \cT$, the frame elements $(\FF_1,\ldots,\FF_k)$, and the `garbage' sets $(\cG_1,\ldots,\cG_k)$; in Section~\ref{disjointness:sec}, we will prove two lemmas on the disjointness of the frame elements and garbage sets; and in Section~\ref{structure:proof:subsec} we will complete the proof of Theorem~\ref{thm:mainStructureThm}. 

\subsection{Tree-frames}\label{delta:tree:def:sec}

The purpose of this section is to introduce the following somewhat complicated objects, which also provide significantly more information (though we will not need this) about the covering system. We will use these objects to construct our frames.  

\begin{definition}\label{def:generalized:tree:frame}
Let $T$ be a rooted tree equipped with maps 
$$\alpha \colon V(T) \to [k], \qquad \beta \colon [k] \to V(T) \qquad \text{and} \qquad \gamma \colon E(T) \to S_1 \cup \cdots \cup S_k$$
such that
\begin{itemize}
\item[$(a)$] $\alpha(u) \ne \alpha(v)$ if $u \prec_T v$ and $u \ne v$;
\item[$(b)$] $\alpha\big( \beta( i ) \big) = i$ for each $i \in [k]$; 
\item[$(c)$] if $e \in E(T)$ and $v$ is the endpoint of $e$ that is closer to the root, then $\gamma(e) \in S_{\alpha(v)}$; 
\item[$(d)$] there exists a permutation $\pi$ of $[k]$ such that, if for each $i \in [k]$ we set 
$$J(i) := \big\{ \alpha(u) : u \prec_T \beta(i) \big\} \qquad \text{and} \qquad I(i) := [k] \setminus J(i),$$
then $J(\pi(i)) \subseteq \{\pi(1),\ldots,\pi(i)\}$ for each $i \in [k]$. 
\end{itemize}
 
Now, for each $\delta > 0$, a~\emph{$\delta$-generalized tree-frame} centred at $T$ is a sequence $(\FF_1,\ldots,\FF_k)$, where $\FF_i$ is a collection of at most $|S_i| - 1$ hyperplanes, satisfying
\begin{itemize} 
\item[$(i)$] $i \in F(H)$ for each $H \in \FF_i$.
\item[$(ii)$] $\mu_{I(i)}(H) > \delta$ for each $H \in \FF_i$. 
\item[$(iii)$] $H_j \in \{ \gamma(e), S_j \}$ for each $H \in \FF_i$ and each $j \in J(i) \setminus \{i\}$, where $e \in E(T)$ is the edge leaving the unique vertex $v \prec_T \beta(i)$ with $\alpha(v) = j$ in the direction of $\beta(i)$;   
\item[$(iv)$] If $\min\big\{ |S_i|, |S_j| \big\} \ge \delta^{-1}$ and $i \ne j$, then $\FF_i$ and $\FF_j$ are disjoint. 
\end{itemize}
\end{definition}

In Sections~\ref{depth:first:sec}--\ref{structure:proof:subsec} we will construct, for any  
$\cA$ satisfying~\eqref{eq:structure:assumption}, a $\delta$-generalized tree-frame satisfying~\eqref{eq:structure:conclusion}. The next lemma shows that this will be sufficient to prove Theorem~\ref{thm:mainStructureThm}.

\enlargethispage*{\baselineskip}

\begin{lemma}\label{lem:tree:frames:are:frames:too}
If $(\FF_1,\ldots,\FF_k)$ is a $\delta$-generalized tree-frame centred at a rooted tree $T$, then $(\FF_1,\ldots,\FF_k)$ is a $\delta$-generalized frame.
\end{lemma}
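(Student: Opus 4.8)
The plan is to unwind the two definitions and check that a $\delta$-generalized tree-frame satisfies all the conditions of a simple $\delta$-generalized frame after applying the permutation $\pi$ supplied by condition $(d)$ of Definition~\ref{def:generalized:tree:frame}. Concretely, I would first relabel the coordinate sets via $\pi$, so that without loss of generality $J(i) \subseteq \{1,\ldots,i\}$ for every $i \in [k]$; this is exactly what condition $(d)$ buys us, and it is what lets a tree-frame be recognised as a \emph{simple} $\delta$-generalized frame (after which the definition of $\delta$-generalized frame, which allows an arbitrary permutation, finishes the job). Under this relabelling, $I(i) = [k] \setminus J(i) \supseteq \{i+1,\ldots,k\}$, which is precisely the containment $I(i) \supseteq \{i+1,\ldots,k\}$ demanded in Definition~\ref{def:generalizedframe}. (One should also note $i \in J(i)$, since $\beta(i)$ itself has $\alpha$-label $i$ by $(b)$ and $v \prec_T \beta(i)$ includes $v = \beta(i)$; hence $i \notin I(i)$, consistent with the roles of $I(i)$ and the fixed coordinates $s_j(i)$ for $j \notin I(i) \cup \{i\}$.)

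Next I would define, for each $i$ and each $j \in J(i) \setminus \{i\}$, the element $s_j(i) := \gamma(e)$, where $e$ is the edge leaving the unique vertex $v \prec_T \beta(i)$ with $\alpha(v) = j$ in the direction of $\beta(i)$ — this vertex is unique by condition $(a)$, and $\gamma(e) \in S_{\alpha(v)} = S_j$ by condition $(c)$, so $s_j(i) \in S_j$ is well-defined. With this choice, the three pointwise requirements on $\FF_i$ in Definition~\ref{def:generalizedframe} read off directly from the tree-frame conditions: $i \in F(H)$ is $(i)$; $\mu_{I(i)}(H) > \delta$ is $(ii)$; and $H_j \in \{s_j(i), S_j\}$ for $j \notin I(i) \cup \{i\}$ (equivalently $j \in J(i) \setminus \{i\}$) is $(iii)$. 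The cardinality bound $|\FF_i| \le |S_i| - 1$ is built into both definitions. Finally, the disjointness clause — if $\min\{|S_i|,|S_j|\} \ge \delta^{-1}$ and $i \ne j$ then $\FF_i \cap \FF_j = \emptyset$ — is condition $(iv)$ verbatim.

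The verification is therefore essentially bookkeeping, and I do not expect any serious obstacle; the only point requiring a moment's care is matching the index conventions, i.e.\ checking that after the $\pi$-relabelling the set $J(i)$ really is contained in an initial segment $\{1,\ldots,i\}$ and that $i \in J(i)$, so that the ``left of $i$'' fixed coordinates $s_j(i)$ in Definition~\ref{def:generalizedframe} correspond exactly to the edges $\gamma(e)$ listed in condition $(iii)$. Once that correspondence is pinned down, one notes that a simple $\delta$-generalized frame relabelled back by $\pi^{-1}$ is, by definition, a $\delta$-generalized frame, completing the proof of Lemma~\ref{lem:tree:frames:are:frames:too}.
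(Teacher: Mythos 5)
Your proposal is correct and follows essentially the same route as the paper: permute so that $\pi$ is the identity (making $J(i)\subseteq\{1,\ldots,i\}$ and $I(i)\supseteq\{i+1,\ldots,k\}$), define $s_j(i):=\gamma(e)$ for $j\in J(i)\setminus\{i\}$ via the unique ancestor with $\alpha$-label $j$, and read off conditions $(i)$--$(iv)$ of Definition~\ref{def:generalized:tree:frame} as exactly the requirements of Definition~\ref{def:generalizedframe}. The extra observation that $i\in J(i)$ (so $i\notin I(i)$ and the index sets match up) is a detail the paper leaves implicit, and it is handled correctly here.
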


\begin{proof}
Let $T$ be a rooted tree equipped with maps $\alpha$, $\beta$ and $\gamma$ satisfying conditions~$(a)$--$(d)$ of Definition~\ref{def:generalized:tree:frame}. In particular, let $\pi$ be the permutation given by condition~$(d)$, and (to simplify the notation) let us permute the sets $S_1,\ldots,S_k$ so that $\pi$ is the identity, and therefore $J(i) \subseteq \{1,\ldots,i\}$ (and hence $I(i) \supseteq \{ i+1,\ldots,k \}$) for each $i \in [k]$. Now, for each $i \in [k]$ and $j \in J(i) \setminus \{i\}$, set $s_j(i) := \gamma(e) \in S_j$, where $e \in E(T)$ is the edge leaving the unique vertex $v \prec_T \beta(i)$ with $\alpha(v) = j$ in the direction of $\beta(i)$.

We claim that, for each $H \in \FF_i$,
$$i \in F(H), \qquad\quad \mu_{I(i)}(H) > \delta \qquad\quad \text{and} \qquad\quad H_j \in \big\{ s_j(i), S_j \big\}.$$
Indeed, these follow directly from properties~$(i)$,~$(ii)$ and~$(iii)$ of Definition~\ref{def:generalized:tree:frame}. Finally, observe that if $\min\big\{ |S_i|, |S_j| \big\} \ge \delta^{-1}$ then $\FF_i$ and $\FF_j$ are disjoint, by property~$(iv)$.
\end{proof}

\subsection{Constructing the frame}\label{depth:first:sec}

{\setstretch{1.07}

In this section we will construct the $\delta$-generalized tree-frame $(\FF_1,\ldots,\FF_k)$, along with the rooted tree $T$, and a collection $(\cG_1,\ldots,\cG_k)$ of `garbage' sets. Let $C > 0$ and $\eps > 0$ be arbitrary, as in the statement of Theorem~\ref{thm:mainStructureThm}, and set
\begin{equation}\label{def:alpha:delta}
\lambda := \frac{\eps}{2^4 C} \qquad \text{and} \qquad \delta := 2^{-23} \lambda^4 \eps^{2\log_2 (1/\lambda \eps) + 15}.
\end{equation}
Recall that the sets $S_1,\ldots,S_k$ were fixed earlier, and let us fix, for the rest of this section, a minimal cover $\cA$ of $S_{[k]}$ with hyperplanes such that $F(\cA) = [k]$. By Lemma~\ref{lem:explorationTree}, there exists an $(\lambda,\eps/2,\delta)$-exploration tree of $\cA$; let us also fix such a tree $\cT$. 

The first step is to observe that every $i \in [k]$ occurs as the label $i_u$ of some vertex $u \in V(\cT)$. This is an immediate consequence of the following simple observation about index trees.

\begin{observation}\label{obs:index:voracious}
Let $\cT$ be an index tree, let $u \in V(\cT)$, and let $j \in I_u$. Then there exists $v \in V(\cT)$, with $u \prec_\cT v$, such that $i_v = j$. 
\end{observation}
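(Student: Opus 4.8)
The plan is to induct on $|I_u|$, using the defining property~\eqref{eq:index:tree} of an index tree at each step. If $|I_u| = 1$, then $I_u = \{i_u\}$, so the only $j \in I_u$ is $j = i_u$, and we may take $v = u$ (recall $u \prec_\cT u$). For the induction step, suppose $|I_u| \ge 2$ and let $j \in I_u$. If $j = i_u$ we are again done by taking $v = u$, so assume $j \ne i_u$; then $j \in I_u \setminus \{i_u\}$. By~\eqref{eq:index:tree} applied at the vertex $u$, we have $\bigcup_{w \in N(u)} I_w = I_u \setminus \{i_u\}$, so there exists an out-neighbour $w \in N(u)$ with $j \in I_w$. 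Since $w$ is a proper descendant of $u$ we have $I_w \subseteq I_u \setminus \{i_u\}$, and in particular $|I_w| < |I_u|$, so the induction hypothesis applies to $w$ and $j$: there exists $v \in V(\cT)$ with $w \prec_\cT v$ and $i_v = j$. Finally, since $w \in N(u)$ we have $u \prec_\cT w \prec_\cT v$, hence $u \prec_\cT v$, which completes the induction.

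The only point requiring a small amount of care is the assertion that an index tree actually \emph{has} vertices realising~\eqref{eq:index:tree} as written when $I_u \setminus \{i_u\} \ne \emptyset$: condition~$(ii)$ of Definition~\ref{def:index:tree} forces $\bigcup_{w \in N(u)} I_w = I_u \setminus \{i_u\}$, and since this union is nonempty the vertex $u$ must have at least one out-neighbour, and every $j \in I_u \setminus \{i_u\}$ lies in $I_w$ for at least one such $w$. There is no real obstacle here; the argument is a routine structural induction, and the main (minor) subtlety is simply to remember to treat the case $j = i_u$ separately, where one takes $v = u$ directly rather than descending.
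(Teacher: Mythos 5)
Your proof is correct and takes essentially the same approach as the paper's: follow the coordinate $j$ down the tree using property~\eqref{eq:index:tree} until it is the explored coordinate. You phrase it as a clean induction on $|I_u|$, while the paper gives the same descent argument more tersely, but the underlying idea is identical.
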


\begin{proof}
This follows easily from~\eqref{eq:index:tree}: if $j \in I_v$ and $i_v \ne j$, then $j \in I_w$ for some $w \in N(u)$, and if $j \in I_v$ and $v \in \p(\cT)$ then $I_v = \{j\}$, so $i_v = j$. 
\end{proof}

\enlargethispage*{\baselineskip}

To extract our $\delta$-generalized tree-frame from the exploration tree $\cT$, we will also need the notion of a \emph{depth-first search ordering} $\prec$ on the vertices of a rooted tree $T$. This is defined by placing an arbitrary linear order on the out-neighbours of each vertex of $T$, and then setting $u \prec v$ if either $u \prec_T v$, or if the branch leading to $u$ precedes the branch leading to $v$ in the ordering of the neighbours of the last common ancestor of $u$ and $v$. 

\begin{definition}\label{def:final:frame}
Let $\prec$ be a depth-first search ordering on the vertices of $\cT$. We define a rooted tree~$T$ and a $\delta$-generalized tree-frame centred at $T$ as follows: 
\begin{itemize}
\item[$1.$] For each $i \in [k]$, define $\beta(i)$ to be the $\prec$-minimal vertex $u$ of $\cT$ such that $i_u = i$. \smallskip
\item[$2.$] Define $T$ to be the union of the paths in $\cT$ from $\beta(1), \ldots, \beta(k)$ to the root.\smallskip 
\item[$3.$] For each $u \in V(T)$, define $\alpha(u) := i_u$.\smallskip
\item[$4.$] For each edge $uv \in E(T)$, where $u \in N(v)$, define $\gamma(uv) := s_u \in S_{i_v}$.\smallskip 
\item[$5.$] 
\begin{itemize}
\item[$(a)$] Set $\FF_i := \FF_{\beta(i)}$ and $\cG_i := \emptyset$ for each $i \in [k]$ such that $\beta(i)$ is a good vertex of $\cT$. \vskip0.05cm
\item[$(b)$] Set $\cG_i := \cG_{\beta(i)}$ and $\FF_i := \emptyset$ for each $i \in [k]$ such that $\beta(i)$ is a bad vertex of $\cT$.
\end{itemize}
\end{itemize}
\end{definition}

Properties $(a)$--$(d)$ of Definition~\ref{def:generalized:tree:frame} follow easily from this construction. Indeed, we have $\alpha(\beta(i)) = i$ for each $i \in [k]$ by our choice of $\alpha$ and $\beta$, and $\gamma(uv) \in S_{i_v} = S_{\alpha(v)}$ for all $uv \in E(T)$ with $u \in N(v)$. To see that $\alpha(u) \ne \alpha(v)$ if $u \prec_T v$ and $u \ne v$, recall $\cT$ is an index tree, and therefore satisfies~\eqref{eq:index:tree}, so $\alpha(u) = i_u$ is not included in any of the sets associated with the descendants of $u$. For $(d)$, let $\pi$ be the permutation of $[k]$ given by the ordering $\prec$ restricted to $\{\beta(1),\ldots,\beta(k)\}$, and observe that if $u \prec_T \beta(\pi(i))$ then $u \prec \beta(\pi(i))$, and hence $\beta(\alpha(u)) \prec \beta(\pi(i))$, by our choice of $\beta$. It follows that $\alpha(u) \in \{\pi(1),\ldots,\pi(i)\}$, and therefore that $J(\pi(i)) = \big\{ \alpha(u) : u \prec_T \beta(\pi(i)) \big\} \subseteq \{\pi(1),\ldots,\pi(i)\}$
for each $i \in [k]$, as required.
}
 
{\setstretch{1.09}

The following lemma shows that properties $(i)$, $(ii)$ and $(iii)$ of Definition~\ref{def:generalized:tree:frame} also hold.
 
\begin{lemma}\label{lem:basic:frame:properties}
Let $i \in [k]$ and $H \in \FF_i$, let $j \in J(i) \setminus \{i\}$, and let $e_j \in E(T)$ be the edge leaving the unique vertex $v \prec_T \beta(i)$ with $\alpha(v) = j$ in the direction of the vertex $\beta(i)$. Then 
$$i \in F(H), \qquad \mu_{I(i)}(H) > \delta \qquad \text{and} \qquad H_j \in \{ \gamma(e_j), S_j \}.$$
\end{lemma}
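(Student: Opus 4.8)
The plan is to trace the two claimed properties back to the corresponding conditions on the exploration tree $\cT$, using the construction in Definition~\ref{def:final:frame}. First I would dispose of the trivial case: if $\FF_i \ne \emptyset$ then by step $5(a)$ of Definition~\ref{def:final:frame} the vertex $\beta(i)$ is good and $\FF_i = \FF_{\beta(i)}$, so every $H \in \FF_i$ lies in $\FF_{\beta(i)} \subseteq \cA_{\beta(i)}$. Set $u := \beta(i)$, and recall $i_u = \alpha(u) = i$. By Definition~\ref{def:exploration:tree}$(b)$ (condition~\eqref{cond:BigFrame} and the accompanying statement), $\mu_{I_u \setminus \{i_u\}}(H) > \delta$ for each $H \in \FF_u$; so to get $\mu_{I(i)}(H) > \delta$ it suffices to check that $I(i) \subseteq I_u \setminus \{i_u\}$, equivalently $J(i) \supseteq [k] \setminus (I_u \setminus \{i\})$, i.e.\ that every coordinate \emph{not} in $I_u$ (and also not equal to $i$, but $i \notin J(i)$ since $\alpha(u)=i$ and $u \not\prec_T u$... actually $u \prec_T u$, so one must be slightly careful here — more below) appears as some $\alpha(v)=i_v$ with $v \prec_T u$. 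This is exactly what the index-tree structure gives: along the path from the root to $u$, the labels $I_w$ shrink according to \eqref{eq:index:tree}, losing precisely the coordinate $i_w$ at each step, so $[k] \setminus I_u = \{ i_w : w \prec_T u,\ w \ne u\}$ — wait, one must include or exclude $u$ itself correctly. Since $\prec_\cT$ by convention includes $v \prec_\cT v$, I would phrase it as: $I_u = [k] \setminus \{ i_w : w \prec_\cT u,\, w \ne u \}$, proved by an easy induction on the depth of $u$ using \eqref{eq:index:tree}; hence $J(i) = \{ \alpha(w) : w \prec_T \beta(i),\, w \ne \beta(i)\} \cup \{i\}$ — here I need to double check whether $J(i)$ as defined in Definition~\ref{def:generalized:tree:frame}$(d)$ includes $i$; it is $\{\alpha(w) : w \prec_T \beta(i)\}$ and $\beta(i) \prec_T \beta(i)$, so $i = \alpha(\beta(i)) \in J(i)$. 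Thus $J(i) = \{ i_w : w \prec_\cT u \}$ and $I(i) = [k] \setminus J(i) = I_u \setminus \{i\}$ exactly, giving $\mu_{I(i)}(H) = \mu_{I_u \setminus \{i_u\}}(H) > \delta$.

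For the second property, fix $H \in \FF_i$ and $j \in J(i) \setminus \{i\}$, and let $v \prec_T \beta(i)$ be the unique vertex with $\alpha(v) = j$; uniqueness is exactly property~$(a)$ of Definition~\ref{def:generalized:tree:frame} (no two ancestors of $\beta(i)$ share an $\alpha$-value), which was already verified in the text preceding the lemma. Let $w \in N(v)$ be the out-neighbour of $v$ lying on the path from $v$ to $\beta(i)$, so $e_j = vw$ and $\gamma(e_j) = s_w$ by step $4$ of Definition~\ref{def:final:frame}. Now $H \in \FF_{\beta(i)} \subseteq \cA_{\beta(i)}$, and since $\cA_{\beta(i)} \subseteq \cA_w$ (by repeated application of Definition~\ref{def:exploration:tree}$(a)(i)$ down the path from $w$ to $\beta(i)$), we have $H \in \cA_w$. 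Condition~$(iii)$ of Definition~\ref{def:exploration:tree}$(a)$, applied to the edge $vw$ with $w \in N(v)$, states precisely that $H_{i_v} \in \{ s_w, S_{i_v} \}$ for every $H \in \cA_w$; since $i_v = \alpha(v) = j$ and $s_w = \gamma(e_j)$, this is exactly $H_j \in \{ \gamma(e_j), S_j \}$, as required.

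The only genuinely fiddly point — and the one I'd expect to be the main obstacle in writing this cleanly — is bookkeeping the $\prec_\cT$ convention ($v \prec_\cT v$) together with the index-tree identity, to make sure $I(i)$ equals $I_u \setminus \{i\}$ on the nose rather than up to an off-by-one in whether $i$ itself is included. Everything else is a direct unwinding of definitions: the containment $\cA_{\beta(i)} \subseteq \cA_w$ needs a one-line induction (transitivity of $(a)(i)$ along a path), and the identification of $I_u$ with the complement of the ancestor-labels needs a one-line induction on depth using \eqref{eq:index:tree}. I would state both of these as inline observations rather than separate lemmas. No Lov\'asz Local Lemma, no counting, and no use of \eqref{eq:structure:assumption} is needed here — this lemma is purely structural, extracting properties $(ii)$ and $(iii)$ of the tree-frame (Definition~\ref{def:generalized:tree:frame}) from the exploration-tree data; properties $(i)$ and $(iv)$, and the choice of $\pi$, having been handled in the surrounding text and (for $(iv)$) presumably in the disjointness lemmas of Section~\ref{disjointness:sec}.
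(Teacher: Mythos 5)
Your second part (verifying $H_j \in \{\gamma(e_j), S_j\}$) is correct and matches the paper: unwind $e_j = vw$ with $w \in N(v)$, apply Definition~\ref{def:exploration:tree}$(a)(iii)$ at the vertex $w$, and use the nested inclusion $\cA_{\beta(i)} \subseteq \cA_w$ from $(a)(i)$.

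The first part has a genuine gap. You claim $I_u = [k] \setminus \{ i_w : w \prec_\cT u,\ w \ne u \}$ (and hence $I(i) = I_{\beta(i)} \setminus \{i\}$ on the nose), arguing by induction on depth via \eqref{eq:index:tree}. But \eqref{eq:index:tree} is $\bigcup_{u' \in N(v)} I_{u'} = I_v \setminus \{i_v\}$: it only says the \emph{union} over all children recovers $I_v \setminus \{i_v\}$. For a single child $u$ one only gets the inclusion $I_u \subseteq I_v \setminus \{i_v\}$, and in the actual construction (Lemma~\ref{one:step:exploration}, where $I_{w(s)} = J(s) = F(\cA_s^*)$) a child's index set will typically be a \emph{proper} subset. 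So $I_u$ can be strictly smaller than the complement of the strict-ancestor labels, and the identity $I(i) = I_{\beta(i)} \setminus \{i\}$ fails; all you truly have is $I_{\beta(i)} \setminus \{i\} \subseteq I(i)$. Since $\mu$ over a \emph{larger} index set is \emph{smaller}, this inclusion alone gives $\mu_{I(i)}(H) \le \mu_{I_{\beta(i)} \setminus \{i\}}(H)$, which points the wrong way, so your derivation of $\mu_{I(i)}(H) > \delta$ does not go through as written.

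The missing step (which is the one the paper supplies) is to observe that the extra coordinates $j \in I(i) \setminus I_{\beta(i)}$ are all \emph{free} for $H$. Indeed, by Definition~\ref{def:exploration:tree}$(a)(ii)$ we have $F(\cA_{\beta(i)}) \subseteq J(i) \cup I_{\beta(i)}$; since $H \in \FF_{\beta(i)} \subseteq \cA_{\beta(i)}$ and $I(i) \cap J(i) = \emptyset$, any $j \in I(i) \setminus I_{\beta(i)}$ lies outside $F(H)$, so $H_j = S_j$. Therefore the factors for these extra coordinates all equal $1$, and $\mu_{I(i)}(H) = \mu_{I_{\beta(i)} \setminus \{i\}}(H) > \delta$. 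Note that you explicitly invoked property $(a)(iii)$ for the second half but never used $(a)(ii)$; it is precisely $(a)(ii)$ that rescues the first half. Your self-flagged worry (the $v \prec_\cT v$ convention) was a red herring---the real issue is that the index sets can shrink by more than one coordinate per tree edge.
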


\begin{proof}
Note that $\FF_i \ne \emptyset$ implies that the vertex $\beta(i)$ is good. By Definition~\ref{def:exploration:tree}$(b)$, it follows that $i \in F(H)$, and also that $\mu_{I_{\beta(i)} \setminus \{i\}}(H) > \delta$. Moreover $\FF_i =  \FF_{\beta(i)} \subseteq \cA_{\beta(i)}$ and $F(\cA_{\beta(i)}) \subseteq J(i) \cup I_{\beta(i)}$, by Definition~\ref{def:exploration:tree}, and therefore $H_j = S_j$ for every $j \in I(i) \setminus I_{\beta(i)}$. Since $i \in J(i)$, and therefore $i \not\in I(i)$, it follows that $\mu_{I(i)}(H) > \delta$.

Now suppose that $e_j = uv$, with $u \in N(v)$, and observe that, by Definition~\ref{def:exploration:tree}$(a)$, we have $H'_j \in \{s_u,S_j\} = \{ \gamma(e_j), S_j\}$ for every hyperplane $H' \in \cA_u$. Noting that $\FF_i \subseteq \cA_{\beta(i)} \subseteq \cA_u$ (by Definition~\ref{def:exploration:tree}, and since $u \prec_T \beta(i)$), it follows that $H_j \in \{\gamma(e_j),S_j\}$, as claimed.
\end{proof}

 It therefore only remains to show that property~$(iv)$ of Definition~\ref{def:generalized:tree:frame} and the inequality~\eqref{eq:structure:conclusion} hold. Both of these properties will follow from our choice of $\beta(1), \ldots, \beta(k)$.

\subsection{Lemmas on disjointness}\label{disjointness:sec}

In this section we will prove two straightforward but crucial lemmas; the first verifies condition~$(iv)$ of the definition of a $\delta$-generalized tree-frame. 

\begin{lemma}\label{lem:big:frames:disjoint}
If\/ $\min \{ |S_i|, |S_j| \} \ge \delta^{-1}$ and $i \ne j$, then $\FF_{i}$ and $\FF_{j}$ are disjoint. 
\end{lemma}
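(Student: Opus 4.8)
The plan is to use the key property that distinguishes the vertices $\beta(i)$: they are chosen $\prec$-minimally, where $\prec$ is a depth-first search ordering. The heart of the matter is to show that if $H \in \FF_i \cap \FF_j$ with $i \ne j$, then one of $|S_i|, |S_j|$ must be small (less than $\delta^{-1}$). So suppose $H \in \FF_i \cap \FF_j$; since $\FF_i = \FF_{\beta(i)} \subseteq \cA_{\beta(i)}$ and likewise for $j$, and recalling that along any root-to-leaf path in $\cT$ the families $\cA_u$ are nested (Definition~\ref{def:exploration:tree}$(a)(i)$), the vertices $\beta(i)$ and $\beta(j)$ cannot be $\prec_\cT$-incomparable in a way that lets $H$ live in both $\cA_{\beta(i)}$ and $\cA_{\beta(j)}$ — more precisely, I would argue that $H \in \cA_w$ for every $w$ on the path in $\cT$ between $\beta(i)$ and their last common ancestor, and similarly for $\beta(j)$, so in particular $H \in \cA_{w}$ where $w$ is the last common ancestor of $\beta(i)$ and $\beta(j)$ in $\cT$.

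\textbf{Main step.} Let $w$ be the last common ancestor of $\beta(i)$ and $\beta(j)$ in $\cT$, and without loss of generality say $\beta(i) \prec \beta(j)$, so that (by the definition of depth-first search) $\beta(i)$ lies in an earlier branch below $w$ than $\beta(j)$, or $\beta(i) = w$, or $\beta(i) \prec_\cT w$. The key observation is this: since $\beta(i)$ is $\prec$-minimal among vertices labelled $i$, and since by Observation~\ref{obs:index:voracious} the coordinate $i$ reappears as a label $i_v$ at some descendant $v$ of $w$ along \emph{every} branch of $\cT$ below $w$ on which $i \in I_v$ — in particular along the branch towards $\beta(j)$ provided $i \in I_w$ — minimality of $\beta(i)$ forces $\beta(i)$ to occur no later than $w$ (otherwise a copy of the label $i$ along the $\beta(j)$-branch would either precede or be incomparable in a disallowed way). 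Concretely, I expect to deduce that $i \notin I_w \setminus \{i_w\}$, i.e. either $i = i_w$ or $i \notin I_w$; combined with $H \in \cA_w$ and $F(\cA'_w) = I_w$, together with the fact that $H \in \FF_i$ gives $i \in F(H)$, one pins down $i_w = i$ (the case $i \notin I_w$ contradicts $i \in F(\cA_w) \subseteq \{i_x : x \prec_\cT w\} \cup I_w$ combined with $\alpha$-distinctness). Then $H \in \FF_i$ forces $\mu_{I(i)}(H) > \delta$; but $I_w \setminus \{i\} = I_w \setminus \{i_w\}$ is "used up" going towards $\beta(j)$, and since $j \in F(H)$ as well (from $H \in \FF_j$) with $j \ne i$, the coordinate $j$ is fixed in $H$ and lies in $I_w \setminus \{i\}$, contributing a factor of at most $|S_j|^{-1}$ to $\mu_{I_w \setminus \{i\}}(H)$, hence $\delta < \mu_{I(i)}(H) \le \mu_{\{j\}}(H) \le |S_j|^{-1}$, giving $|S_j| < \delta^{-1}$ as desired. (By symmetry, swapping the roles handles the configuration $\beta(j) \prec \beta(i)$.)

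\textbf{Main obstacle.} The delicate point is making rigorous the claim that the depth-first ordering together with $\prec$-minimality of $\beta(i)$ forces $i_w = i$ at the last common ancestor $w$ (equivalently, that $i \notin I_w \setminus \{i_w\}$). This is exactly the "surprisingly simple" fact the authors allude to, but the bookkeeping with three orders ($\prec_\cT$, the branch order, and the induced $\prec$) needs care: one must check that if $i$ were still "live" below $w$ on the branch towards $\beta(j)$, then Observation~\ref{obs:index:voracious} produces a vertex labelled $i$ strictly $\prec$-smaller than $\beta(i)$ unless $\beta(i)$ already sits on (or above) $w$, and in the latter case $w$ being the \emph{last} common ancestor forces $\beta(i) = w$ hence $i_w = i$. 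Once that combinatorial fact is nailed down, the rest is the short measure estimate above, using only that every $S_j$ has at least two elements so that $\mu_{\{j\}}(H) \le 1/2$ when $j \in F(H)$, and the definition of $\FF_j$.
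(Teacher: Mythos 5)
The measure estimate in your last step is the right ingredient, but the route you take to it — via the last common ancestor $w$ and the claim $i_w = i$ — is both unnecessary and, as you yourself suspected, broken. The problem is in the very step you flag as delicate. You argue that if $i$ were still ``live'' on the $\beta(j)$-branch below $w$, Observation~\ref{obs:index:voracious} would produce a vertex labelled $i$ that is $\prec$-smaller than $\beta(i)$. But this is backwards: since $\beta(i) \prec \beta(j)$, the branch of $w$ containing $\beta(i)$ comes \emph{before} the branch containing $\beta(j)$ in the depth-first order, so any vertex on the $\beta(j)$-branch (in particular a vertex labelled $i$ there) is $\prec$-\emph{larger} than $\beta(i)$, not smaller. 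No contradiction with $\prec$-minimality of $\beta(i)$ arises, and I do not see how to repair the argument for $i_w = i$ (or $\beta(i) = w$) along these lines. Since your chain $\delta < \mu_{I(i)}(H) \le \mu_{\{j\}}(H) \le |S_j|^{-1}$ requires $j \in I(i)$, and you only know $j \in I_w \setminus \{i\}$, the missing inclusion $I_w \setminus \{i\} \subseteq I(i)$ is exactly what $\beta(i) = w$ would buy you — so the gap is genuine.

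The paper's proof bypasses the LCA entirely, and uses the measure estimate in the opposite logical direction. Assume $H \in \FF_i \cap \FF_j$ with (WLOG) $\beta(i) \prec \beta(j)$. From Definition~\ref{def:exploration:tree}$(b)$ one has $i,j \in F(H)$, and Lemma~\ref{lem:basic:frame:properties} gives $\mu_{I(i)}(H) > \delta$. Now instead of proving $j \in I(i)$, observe that $j \in I(i)$ together with $j \in F(H)$ would force $\mu_{I(i)}(H) \le |S_j|^{-1} \le \delta$, a contradiction — so in fact $j \notin I(i)$, hence $j \in J(i)$. By definition of $J(i)$, there is a vertex $u \prec_T \beta(i)$ with $\alpha(u) = j$ (and $u \ne \beta(i)$ since $\alpha(\beta(i)) = i \ne j$). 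Then $u \prec \beta(i) \prec \beta(j)$ in the DFS ordering, which contradicts the $\prec$-minimality of $\beta(j)$ among vertices labelled $j$. This is where the depth-first choice of $\beta$ is actually used: tree ancestors come first in DFS order, so $J(i)$ can only contain labels of vertices that are $\prec$-earlier than $\beta(i)$, and $\beta(j)$'s minimality rules $j$ out. I would recommend replacing your LCA analysis with this direct argument.
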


\begin{proof}
Suppose that $H \in \FF_{\beta(i)} \cap \FF_{\beta(j)}$, and suppose that $\beta(i) \prec \beta(j)$ in the depth-first search ordering. Recall that $i,j \in F(H)$, by Definition~\ref{def:exploration:tree}$(b)$, and $\mu_{I(i)}(H) > \delta$, by Lemma~\ref{lem:basic:frame:properties}. Since $|S_j| \ge \delta^{-1}$, it follows that $j \not\in I(i)$, and hence $j \in J(i)$, i.e., there exists $u \prec_T \beta(i)$ with $\alpha(u) = j$. However, this is a contradiction, since $u \prec \beta(i) \prec \beta(j)$ in the depth-first search ordering, and $\beta(j)$ was chosen to be the $\prec$-minimal vertex $u$ of $\cT$ such that $\alpha(u) = j$.
\end{proof}

The final lemma we need shows that each garbage set only appears on a single path through~$T$. Since the number of fixed coordinates of $H_{I_u}$ decreases along the path (and decreases strictly whenever $\alpha(u) \in F(H)$), this will imply that each hyperplane contributes only $O(1)$ to the sum of the left-hand side of~\eqref{cond:BigGarbage} over vertices $u \in \{ \beta(1), \ldots, \beta(k) \}$. 

\begin{lemma}\label{lem:garbage:on:a:path}
Let $H \in \cG_i \cap \cG_j$. Then either $\beta(i) \prec_{\cT} \beta(j)$, or $\beta(j) \prec_{\cT} \beta(i)$.
\end{lemma}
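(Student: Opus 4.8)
The plan is to argue by contradiction, exactly mirroring the structure of the proof of Lemma~\ref{lem:big:frames:disjoint}. Suppose $H \in \cG_i \cap \cG_j$ but neither $\beta(i) \prec_\cT \beta(j)$ nor $\beta(j) \prec_\cT \beta(i)$; that is, $\beta(i)$ and $\beta(j)$ lie in incomparable positions of $\cT$. Since $\FF_i = \emptyset$ forces $\cG_i = \cG_{\beta(i)}$, both $\beta(i)$ and $\beta(j)$ are bad vertices, and $H \in \cG_{\beta(i)} \subseteq \cA_{\beta(i)}$ and $H \in \cG_{\beta(j)} \subseteq \cA_{\beta(j)}$, with $i_{\beta(i)} = i \in F(H)$ and $i_{\beta(j)} = j \in F(H)$ by Definition~\ref{def:exploration:tree}$(c)$.

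The key point I would extract is that $H$ lying in $\cA_u$ for two $\prec_\cT$-incomparable vertices $u$ is impossible. Let $w$ be the last common ancestor of $\beta(i)$ and $\beta(j)$ in $\cT$, and let $w_i, w_j \in N(w)$ be the two distinct out-neighbours of $w$ with $w_i \prec_\cT \beta(i)$ and $w_j \prec_\cT \beta(j)$. By repeated application of Definition~\ref{def:exploration:tree}$(a)(i)$ along the two paths, $\cA_{\beta(i)} \subseteq \cA_{w_i}$ and $\cA_{\beta(j)} \subseteq \cA_{w_j}$, so $H \in \cA_{w_i} \cap \cA_{w_j}$. Now recall the construction in the proof of Lemma~\ref{lem:explorationTree}: the out-neighbours $w(s)$ of $w$ correspond to distinct elements $s \in S_{i_w}$, and $\cA_{w(s)} = \{ H' \in \cA_w : H'_{I_w} \in \cA_s \}$, where each $\cA_s \subseteq \cH'(i_w, s) = \{ H'_{I_w \setminus \{i_w\}} : H'_{i_w} \in \{s, S_{i_w}\} \}$. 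Thus for $H$ to lie in $\cA_{w_i}$ with $w_i$ corresponding to $s = s_{w_i}$, we need $H_{i_w} \in \{ s_{w_i}, S_{i_w} \}$, and likewise $H_{i_w} \in \{ s_{w_j}, S_{i_w} \}$ for the distinct element $s_{w_j}$. If $H_{i_w} = S_{i_w}$ this is fine, but then we look one level further, at the $I_{w_i}$-restriction: $\cA_{w_i}$ comes from a \emph{minimal} subcover $\cA_{s_{w_i}}^*$ of $S_{I_w \setminus \{i_w\}}$, and by the bijection between $\cH(i_w,s)$ and $\cH'(i_w,s)$ (noted in Section~\ref{operation:sec}, using minimality of $\cA_w$), a single hyperplane $H$ restricts into at most one $\cA_s^*$. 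Hence $H \in \cA_{w_i}$ and $H \in \cA_{w_j}$ with $w_i \neq w_j$ both out-neighbours of $w$ is impossible.

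I expect the main obstacle to be stating this last observation cleanly: one must verify that when we pass from $\cA_w$ to its children in the exploration tree, each hyperplane of $\cA_w$ is ``routed'' to at most one child. This is exactly the content of the remark in Section~\ref{operation:sec} that $\cA_w$ being minimal gives a bijection between $\cH(i_w,s)$ and $\cH'(i_w,s)$, and that $\cA^*_s$ is a \emph{minimal} subcover, so no element of $S_{I_w \setminus \{i_w\}}$ — and hence no hyperplane restricting onto it — can lie in two of the $\cA^*_s$ unless it also lies in another via $H_{i_w} = S_{i_w}$, and even then the minimality of $\cA^*_s$ combined with the injectivity of restriction rules out double membership. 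Once that routing fact is pinned down, the contradiction is immediate: $H$ cannot be routed to both $w_i$ and $w_j$, so $\beta(i)$ and $\beta(j)$ cannot be incomparable, which is precisely the claim. I would write the routing fact as a one-line sub-observation (or simply cite the relevant sentence of Section~\ref{operation:sec}) and keep the rest of the proof to three or four sentences.
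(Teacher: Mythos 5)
Your argument hinges on the claim that each hyperplane of $\cA_w$ is ``routed'' to at most one child of $w$ in the exploration tree, i.e.\ that $\cA_{w(s)} \cap \cA_{w(s')} = \emptyset$ for distinct $s,s' \in S_{i_w}$. This claim is false. If $H_{i_w} = S_{i_w}$ (the case you explicitly flag), then $H \in \cH(i_w,s)$ for \emph{every} $s \in S_{i_w}$, and since the families $\cA_s^*$ for different $s$ are minimal subcovers of the \emph{same} space $S_{I_w \setminus \{i_w\}}$ drawn from the (possibly different) covers $\cH'(i_w,s)$, nothing prevents the restriction $H_{I_w\setminus\{i_w\}}$ from lying in several of them. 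Concretely, take $S_1 = \{0,1\}$, $S_2 = \{0,1,2\}$ and the minimal cover $\cA = \big\{[0,0],[1,0],[*,1],[*,2]\big\}$: exploring coordinate $1$ necessarily yields $\cA_0 = \{[0,0],[*,1],[*,2]\}$ and $\cA_1 = \{[1,0],[*,1],[*,2]\}$, so $[*,1]$ belongs to $\cA_u$ for two $\prec_\cT$-incomparable vertices $u$. The bijection between $\cH(i_w,s)$ and $\cH'(i_w,s)$ that you invoke holds for \emph{fixed} $s$ only and says nothing about membership across different $s$, and minimality of $\cA_s^*$ is not the kind of minimality that would force the subcovers for different $s$ to select disjoint hyperplanes.

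There is also a structural warning sign: your argument never uses the defining feature of $\beta$, namely that $\beta(j)$ is the $\prec$-minimal vertex of $\cT$ labelled $j$ in the depth-first search order. If your approach worked, it would prove the much stronger (and, by the example above, false) statement that $\cA_u$ and $\cA_{u'}$ are disjoint whenever $u$ and $u'$ are $\prec_\cT$-incomparable. The paper's proof instead makes the DFS choice do the work: assume (WLOG) $\beta(i) \prec \beta(j)$ and, for contradiction, $\beta(i) \not\prec_\cT \beta(j)$; since $H \in \cG_i \cap \cG_j$, property $(a)(ii)$ of Definition~\ref{def:exploration:tree} gives $j \in F(H) \subseteq F(\cA_{\beta(i)}) \subseteq J(i) \cup I_{\beta(i)}$. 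If $j \in J(i)$, some $\cT$-ancestor $v$ of $\beta(i)$ has $i_v = j$, so $v \prec \beta(i) \prec \beta(j)$; if $j \in I_{\beta(i)}$, Observation~\ref{obs:index:voracious} produces a $\cT$-descendant $v$ of $\beta(i)$ with $i_v = j$, and $v \prec \beta(j)$ because every descendant of $\beta(i)$ precedes $\beta(j)$ in DFS order. Either way the $\prec$-minimality of $\beta(j)$ is violated. You should switch to this argument; the disjoint-routing claim cannot be repaired.
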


\begin{proof}
Suppose (without loss of generality) that $\beta(i) \prec \beta(j)$ in the depth-first search ordering, and suppose that $\beta(i) \not\prec_{\cT} \beta(j)$, which implies that $u \prec \beta(j)$ for every $u \in V(\cT)$ with $\beta(i) \prec_{\cT} u$. Note that $j \in F(H) \subseteq I_{\beta(i)} \cup J(i)$, since $H \in \cG_i \cap \cG_j$ and by Definition~\ref{def:exploration:tree}. 

If $j \in J(i)$, then $i_v = j$ for some $v \in V(\cT)$ with $v \prec_{\cT} \beta(i)$, and hence $v \prec \beta(i) \prec \beta(j)$. On the other hand, if $j \in I_{\beta(i)}$, then by Observation~\ref{obs:index:voracious} we have $i_v = j$ for some $v \in V(\cT)$ with $\beta(i) \prec_{\cT} v$, and hence (by the observation above) $v \prec \beta(j)$. In either case, this contradicts our choice of $\beta(j)$ as the $\prec$-minimal vertex $v$ of $\cT$ such that $i_v = j$.
\end{proof}

\enlargethispage*{\baselineskip}

Let us record here the following simple consequence of Lemma~\ref{lem:garbage:on:a:path}.

\begin{lemma}\label{lemma:different:fixed:sets:along:path}
If $H \in \cG_i \cap \cG_j$ and $i \ne j$, then $|F(H) \cap I_{\beta(i)}| \ne |F(H) \cap I_{\beta(j)}|$.
\end{lemma}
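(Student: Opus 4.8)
\textbf{Proof proposal for Lemma~\ref{lemma:different:fixed:sets:along:path}.}

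The plan is to use Lemma~\ref{lem:garbage:on:a:path} to place $\beta(i)$ and $\beta(j)$ on a common root-path in $\cT$, and then to exploit the fact that the fixed-coordinate sets strictly shrink as one moves down that path past a vertex whose explored index lies in $F(H)$. Concretely, assume without loss of generality (using Lemma~\ref{lem:garbage:on:a:path}) that $\beta(i) \prec_{\cT} \beta(j)$, so that $\beta(i)$ lies on the path from $\beta(j)$ to the root. Then every vertex $w$ with $\beta(i) \prec_{\cT} w \preceq_{\cT} \beta(j)$ satisfies $I_w \subseteq I_{i_u}$-type containments coming from~\eqref{eq:index:tree}: iterating Definition~\ref{def:index:tree}$(ii)$ along the path gives $I_{\beta(j)} \subseteq I_{\beta(i)}$, and moreover $i_{\beta(i)} \notin I_w$ for any $w$ strictly below $\beta(i)$. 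Hence $I_{\beta(j)} \subseteq I_{\beta(i)} \setminus \{ i_{\beta(i)} \} = I_{\beta(i)} \setminus \{ i \}$.

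The key point is then that $i \in F(H)$: since $H \in \cG_i$, Definition~\ref{def:exploration:tree}$(c)$ gives $i_{\beta(i)} = i \in F(H)$. Combining this with the containment just established, $i$ is a fixed coordinate of $H$ that lies in $I_{\beta(i)}$ but not in $I_{\beta(j)}$. Therefore
\[
F(H) \cap I_{\beta(j)} \subseteq F(H) \cap \big( I_{\beta(i)} \setminus \{ i \} \big) \subsetneq F(H) \cap I_{\beta(i)},
\]
the final strict inclusion because $i$ belongs to the left-hand set's superset $F(H) \cap I_{\beta(i)}$ but not to $F(H) \cap I_{\beta(j)}$. A proper inclusion of finite sets forces $|F(H) \cap I_{\beta(j)}| < |F(H) \cap I_{\beta(i)}|$, which in particular gives $|F(H) \cap I_{\beta(i)}| \ne |F(H) \cap I_{\beta(j)}|$, as claimed. (The case $\beta(j) \prec_{\cT} \beta(i)$ is identical with the roles of $i$ and $j$ swapped.)

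I expect this to be essentially routine; the only point requiring a little care is the correct iteration of~\eqref{eq:index:tree} to conclude $I_{\beta(j)} \subseteq I_{\beta(i)} \setminus \{i\}$ — one must note both that each step down the path can only lose coordinates and that the first such step (out of $\beta(i)$) removes exactly $i_{\beta(i)} = i$. Once that containment is in hand together with $i \in F(H)$, the strict drop in the size of the fixed set is immediate, so there is no real obstacle here; this lemma is the easy book-keeping consequence of Lemma~\ref{lem:garbage:on:a:path} advertised in the text.
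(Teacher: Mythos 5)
Your proof is correct and follows essentially the same route as the paper's: reduce via Lemma~\ref{lem:garbage:on:a:path} to the case $\beta(i) \prec_{\cT} \beta(j)$, iterate~\eqref{eq:index:tree} down the path to obtain $I_{\beta(j)} \subseteq I_{\beta(i)} \setminus \{i\}$, and use $i \in F(H) \cap I_{\beta(i)}$ (since $i_{\beta(i)} = i \in I_{\beta(i)}$ and Definition~\ref{def:exploration:tree}$(c)$ gives $i \in F(H)$) to conclude the strict inclusion $F(H) \cap I_{\beta(j)} \subsetneq F(H) \cap I_{\beta(i)}$. The paper states this more tersely but the underlying steps are identical.
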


\begin{proof}
By Lemma~\ref{lem:garbage:on:a:path}, we have (without loss of generality) $\beta(i) \prec_{\cT} \beta(j)$, which implies, by~\eqref{eq:index:tree} and since $\beta(i) \ne \beta(j)$, that $I_{\beta(j)} \subseteq I_{\beta(i)} \setminus \{i\}$. Since $i \in F(H) \cap I_{\beta(i)}$, it follows that $F(H) \cap I_{\beta(j)} \subsetneq F(H) \cap I_{\beta(i)}$, as required.  
\end{proof}

}

\subsection{The proof of Theorem~\ref{thm:mainStructureThm}}\label{structure:proof:subsec}

We are now ready to prove our main structural result, Theorem~\ref{thm:mainStructureThm}. It only remains to show that the inequality~\eqref{eq:structure:conclusion} follows from~\eqref{eq:structure:assumption}. We will use the following easy consequence of Lemma~\ref{lemma:different:fixed:sets:along:path}. Set $B := \{ i \in [k] : \beta(i) \text{ is bad}\}$. 

\begin{lemma}\label{lem:total:garbage:bound}
$$\bigg| \bigcup_{i \in B} \cG_i \bigg| \, \ge \, \frac{1}{5\lambda} \sum_{i \in B} |S_i|.$$ 
\end{lemma}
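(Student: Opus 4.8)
The plan is to bound the size of $\bigcup_{i \in B} \cG_i$ from below by a double-counting argument that exploits Lemma~\ref{lemma:different:fixed:sets:along:path}. For each bad vertex $\beta(i)$, the garbage condition~\eqref{cond:BigGarbage} guarantees $\sum_{H \in \cG_i} 2^{-|F(H) \cap I_{\beta(i)}|/4} \ge |S_i|/\lambda$. Summing over $i \in B$ gives
\[
\sum_{i \in B} \frac{|S_i|}{\lambda} \, \le \, \sum_{i \in B} \sum_{H \in \cG_i} 2^{-|F(H) \cap I_{\beta(i)}|/4} \, = \, \sum_{H \in \bigcup_{i \in B} \cG_i} \; \sum_{i \in B \,:\, H \in \cG_i} 2^{-|F(H) \cap I_{\beta(i)}|/4}.
\]
So it suffices to show that, for each fixed hyperplane $H$, the inner sum $\sum_{i \in B : H \in \cG_i} 2^{-|F(H) \cap I_{\beta(i)}|/4}$ is bounded by an absolute constant — specifically at most $5$, which then yields $|\bigcup_{i \in B}\cG_i| \cdot 5 \ge \sum_{i \in B}|S_i|/\lambda$, i.e.\ the claimed bound.

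The key step is to control that inner sum. First I would note that Lemma~\ref{lemma:different:fixed:sets:along:path} says: if $i \ne j$ both lie in $B$ and $H \in \cG_i \cap \cG_j$, then $|F(H) \cap I_{\beta(i)}| \ne |F(H) \cap I_{\beta(j)}|$. Hence the map $i \mapsto |F(H) \cap I_{\beta(i)}|$ is injective on $\{ i \in B : H \in \cG_i \}$. Moreover, by Definition~\ref{def:exploration:tree}$(c)$, each such $i$ satisfies $|F(H) \cap I_{\beta(i)}| \ge 2$ (this is part of the defining property of $\cG_{\beta(i)}$: $|F(H) \cap I_u| \ge 2$ for each $H \in \cG_u$). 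Therefore the values $|F(H) \cap I_{\beta(i)}|$, as $i$ ranges over $\{ i \in B : H \in \cG_i\}$, are distinct integers, all at least $2$, and so
\[
\sum_{i \in B \,:\, H \in \cG_i} 2^{-|F(H) \cap I_{\beta(i)}|/4} \, \le \, \sum_{m = 2}^\infty 2^{-m/4} \, = \, \frac{2^{-1/2}}{1 - 2^{-1/4}}.
\]
A quick numerical check gives $2^{-1/2} \approx 0.707$ and $1 - 2^{-1/4} \approx 0.159$, so the sum is roughly $4.44 < 5$. Combining this with the displayed inequality above completes the proof.

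I do not expect a serious obstacle here; the only thing to be careful about is correctly invoking the two earlier facts — the injectivity of $i \mapsto |F(H) \cap I_{\beta(i)}|$ on the relevant index set (Lemma~\ref{lemma:different:fixed:sets:along:path}) and the lower bound $|F(H) \cap I_{\beta(i)}| \ge 2$ (from the definition of a bad vertex, Definition~\ref{def:exploration:tree}$(c)$) — and then bounding the resulting geometric-type series by the clean constant $5$. One should also double-check that every $i \in B$ indeed has $\cG_i = \cG_{\beta(i)}$ with $\beta(i)$ bad, which is exactly step~$5(b)$ of Definition~\ref{def:final:frame}, so $\cG_i$ satisfies~\eqref{cond:BigGarbage} with $u = \beta(i)$. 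With those points in place the argument is a two-line double count.
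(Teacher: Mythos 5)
Your proof is correct and matches the paper's argument essentially line for line: sum the garbage inequality~\eqref{cond:BigGarbage} over $i \in B$, then use Lemma~\ref{lemma:different:fixed:sets:along:path} to show that the values $|F(H) \cap I_{\beta(i)}|$ are distinct integers $\ge 2$, so each hyperplane contributes at most $\sum_{\ell \ge 2} 2^{-\ell/4} = (2^{1/2}-2^{1/4})^{-1} < 5$ to the double sum. No gaps; the numerical check also agrees with the paper's.
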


\begin{proof}
Summing~\eqref{cond:BigGarbage} over $i \in B$, we obtain 
$$\sum_{i \in B} \sum_{H \in \cG_i} 2^{-|F(H) \cap I_{\beta(i)}|/4} \ge \, \frac{1}{\lambda} \sum_{i \in B} |S_i|.$$
Now, by Lemma~\ref{lemma:different:fixed:sets:along:path}, for each $H$ and $\ell \ge 2$ there is at most one value of $i \in B$ such that $H \in \cG_i$ and $|F(H) \cap I_{\beta(i)}| = \ell$, so for each $H \in \bigcup_{i \in B} \cG_i$ we have 
$$\sum_{i \in B \,:\, H \in \cG_i} 2^{-|F(H) \cap I_{\beta(i)}|/4} \le \sum_{\ell = 2}^\infty 2^{-\ell/4} = \big( 2^{1/2} - 2^{1/4} \big)^{-1} < 5,$$
as required.
\end{proof}

Theorem~\ref{thm:mainStructureThm} now follows easily from the lemmas above.

\begin{proof}[Proof of Theorem~\ref{thm:mainStructureThm}]
We claim that the sequence $(\FF_1,\ldots,\FF_k)$ constructed in Definition~\ref{def:final:frame} is a $\delta$-generalized tree-frame centred at $T$, and satisfies~\eqref{eq:structure:conclusion}. By Lemma~\ref{lem:tree:frames:are:frames:too}, it will follow that $(\FF_1,\ldots,\FF_k)$ is also a $\delta$-generalized frame, so this will be sufficient to prove the theorem. Note that properties $(a)$--$(d)$ and $(i)$--$(iv)$ of Definition~\ref{def:generalized:tree:frame} follow from the comments after Definition~\ref{def:final:frame}, and by Lemmas~\ref{lem:basic:frame:properties} and~\ref{lem:big:frames:disjoint}. Moreover, by discarding excess hyperplanes if necessary, we may assume that $|\FF_i| \le |S_i| - 1$ for each $i \in [k]$. It therefore only remains to show that~\eqref{eq:structure:conclusion} holds.  

To do so, recall that $\cT$ is an $(\lambda,\eps/2,\delta)$-exploration tree of $\cA$, and hence 
$$|\FF_i| > \big( 1 - \eps/2 \big) \big( |S_i| - 1 \big)$$
for each $i$ such that $\beta(i)$ is a good vertex, i.e., for each $i \in [k] \setminus B$. Now, by Lemma~\ref{lem:total:garbage:bound} and the condition~\eqref{eq:structure:assumption}, we have 
$$\frac{1}{5\lambda} \sum_{i \in B} |S_i| \le |\cA | \leq C \sum_{i=1}^k \big( |S_i| - 1 \big),$$
and hence, recalling from~\eqref{def:alpha:delta} that $\lambda = \eps / (2^4 C)$, we obtain 
\begin{align*}
\sum_{i=1}^k |\FF_i| & \, \ge \sum_{i \in [k] \setminus B} \big( 1 - \eps/2 \big) \big( |S_i| - 1 \big) \, \ge \big( 1 - \eps/2 \big) \sum_{i=1}^k \big( |S_i| - 1 \big) - \sum_{i \in B} |S_i| \\
& \, \ge \, \big( 1 - \eps/2 - 5C\lambda \big) \sum_{i=1}^k \big( |S_i| - 1 \big) \, \ge \, (1 - \eps) \sum_{i=1}^k \big( |S_i| - 1 \big),
\end{align*}
as required. 
\end{proof}

\section{Arithmetic frames and the proof of the lower bound}\label{arithmetic:sec}

In order to deduce Theorem~\ref{thm:MainCountingThm} from Theorem~\ref{thm:mainStructureThm}, we will need to bound the number of $\delta$-generalized frames in the integers. In this section we will warm up for the calculation ahead by counting a simpler set of objects, which we call `arithmetic frames', and thereby deducing a lower bound on the number of minimal covering systems of $\Z$. Recall that\footnote{We remark that the constant $\tau$ also appears in the study of the iterated divisor function, see~\cite{BEFS,ETT}.}
\[ 
\tau = \sum_{t = 1}^{\infty} \left(\log \frac{t+1}{t} \right)^2 \approx 0.977. 
\]
The following proposition provides the lower bound in Theorem~\ref{thm:MainCountingThm}. 

\begin{prop}\label{lem:NumbOfFrames}
The number of minimal covering systems of $\Z$ of size $n$ is at least 
$$\exp\left( \left( \frac{4\sqrt{\tau}}{3} + o(1)\right)\frac{n^{3/2}}{(\log n)^{1/2}} \right)$$
as $n \to \infty$.
\end{prop}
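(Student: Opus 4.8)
The plan is to exhibit an explicit family of minimal covering systems of $\Z$ of size $n$ that is large enough to give the stated bound, refining the simple construction from the introduction. The loss in that simple construction came from two sources: using only $p-1$ progressions per prime $p$ (whereas Simpson's theorem shows this is forced) is fine, but restricting to $2^{i-1}$ choices for the progression $A_j^{(i)}$ and using consecutive primes is wasteful. The key realization is that if we work with a single prime power $p^{\gamma}$ rather than a product of distinct primes, then at ``level'' $i$ (for $1 \le i \le \gamma$) we have roughly $p^{i-1}$ choices of coset for each of the $p-1$ hyperplanes fixing the $i$-th base-$p$ digit, giving $(p-1)(i-1)\log p$ bits per level, hence $\sum_{i=1}^\gamma (p-1)(i-1)\log p \approx \tfrac{1}{2}(p-1)\gamma^2 \log p$ bits from a single prime power, at a cost of $n_p := \gamma(p-1)$ progressions. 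More generally, I would take $N = \prod_p p^{\gamma_p}$ over a suitable set of primes with exponents $\gamma_p$, build a frame centred at $0$ in $\Z_N \cong \prod_p \Z_{p^{\gamma_p}}$ using the Chinese Remainder Theorem identification described in Section~\ref{structure:sec}, and optimize the allocation of the budget $n = 1 + \sum_p \gamma_p(p-1)$ across primes to maximize the total entropy $\sum_p \tfrac12 \gamma_p^2 (p-1)\log p \cdot(1+o(1))$.

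First I would make precise the ``arithmetic frame'' construction: fix primes and exponents, and for each prime $p$ and each level $i \in [\gamma_p]$ choose, for each nonzero residue $a \in \Z_p$, an arithmetic progression whose modulus is $p^i \prod_{q \ne p} q^{\gamma_q}$ — equivalently, a hyperplane fixing the first $i$ base-$p$ digits of the $p$-component (with the $i$-th digit equal to $a$) and fixing \emph{all} digits of every other prime component — subject to the progression passing through a point congruent to $0$ in every coordinate strictly to the left. One must check two things: (1) that each such choice, together with $\{0 \bmod N\}$, yields a genuine covering system of $\Z$ that is minimal (this is exactly the frame observation following Definition~\ref{def:frame}, plus the remark that here every hyperplane \emph{does} correspond to an arithmetic progression because within each prime component we only ever fix an initial segment of digits), and (2) that distinct choices of the cosets yield distinct covering systems, so that we are genuinely counting. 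For (2) one observes that the multiset of moduli already records $\gamma_p$ and $p$, and then the progression with a given modulus and given ``digit pattern to the left'' is determined by its residue class, which is a free parameter ranging over roughly $p^{i-1}$ values (the digits to the left of position $i$ in the $p$-component).

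Next I would carry out the optimization. With $n$ progressions to spend, writing $x$ for the scale parameter, the natural choice is to take $\gamma_p \approx c$ constant and let $p$ range over primes up to some threshold $y$, so that $n \approx c \sum_{p \le y} (p-1) \approx c\, y^2/(2\log y)$ by the prime number theorem, while the log-count is $\approx \sum_{p \le y} \tfrac12 c^2 (p-1) \log p$. A cleaner and better choice, which is what produces the constant $\tfrac{4\sqrt\tau}{3}$ and the constant $\tau = \sum_t (\log\frac{t+1}{t})^2$, is to instead use exponents $\gamma_p$ that vary with $p$: the summand $(\log\frac{t+1}{t})^2$ strongly suggests that one should not group by prime but rather think of the ``$t$-th hyperplane'' contributing $\log\frac{t+1}{t}$-ish increments, i.e. one takes a single large prime power (or a careful telescoping family) so that the $t$-th progression added gets $\log\big((t+1)/t\big)$ worth of base change. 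I would set up the sum so that the total count is $\exp\big((1+o(1))\sum_{t \le T}\text{(something)} \big)$, match $n = \Theta(T^{3/2}(\log T)^{-1/2})$ scaling from Simpson's lower bound applied in reverse, and then a Lagrange-multiplier / Abel-summation computation identifies the optimal profile and yields the constant $\tfrac{4\sqrt\tau}{3}$; the exponent $n^{3/2}/(\log n)^{1/2}$ comes out because spending $n$ progressions on primes up to $y$ forces $y \asymp (n\log n)^{1/2}$ and the entropy is then $\asymp y^3/\log y \asymp n^{3/2}/(\log n)^{1/2}$.

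The main obstacle, I expect, is not the construction or the verification of minimality (these are routine given the frame machinery), but pinning down the \emph{exact} constant $\tfrac{4\sqrt\tau}{3}$: one must choose the primes, the exponents $\gamma_p$, and the level-by-level structure so that the leading term of $\log(\#\text{systems})$ is asymptotically optimal among all frame-like constructions, and then show via a clean variational argument (Abel summation against the prime-counting function, plus an optimization of a one-variable profile) that the optimum equals $\tfrac{4\sqrt\tau}{3}\, n^{3/2}(\log n)^{-1/2}$. This requires being careful that the $o(1)$ absorbs (a) the difference between $p-1$ and $p$, (b) error terms in the prime number theorem, and (c) the fact that not every hyperplane in a repeated-prime frame is an arithmetic progression — which is precisely why I restrict, within each prime component, to fixing only an initial segment of digits. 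A secondary subtlety is ensuring the counted systems are pairwise distinct \emph{and} have distinct moduli (so that the bound also applies to the distinct-moduli case, as the paper notes); this follows because the modulus of each chosen progression has a distinct $(p,i)$-fingerprint and within a fixed $(p,i)$ the digit patterns to the left together with the free residue distinguish the progressions.
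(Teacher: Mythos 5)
The high-level intuition here (refine the simple construction to use repeated primes, aim for the constant $\tau$, target the scaling $n^{3/2}(\log n)^{-1/2}$) is pointing in the right direction, but the core mechanism you propose is wrong, and the construction as described does not produce covering systems.

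The central error is the claim that a single prime power $p^\gamma$ yields $p^{i-1}$ choices at level $i$. It yields exactly one. The reason is the very constraint you correctly identify at the end: an arithmetic progression in $\Z_{p^\gamma}$ corresponds to a hyperplane that fixes an \emph{initial segment} of base-$p$ digits. Since your level-$i$ hyperplane fixes digit $i$ to $a$, it must also fix digits $1,\dots,i-1$, and for the collection to cover (e.g.\ for the point with $i$-th digit $a$ and all other digits $0$ to be covered) those digits are forced to equal the axis value $0$. There is no free residue class. Accordingly, a single prime power contributes \emph{zero} log-count, not $\tfrac12(p-1)\gamma^2\log p$; the entire quantitative analysis built on that formula collapses.

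Related to this, the stated modulus $p^i\prod_{q\ne p}q^{\gamma_q}$ is incorrect. A frame hyperplane at level $(p,i)$ must leave the coordinates \emph{after} $(p,i)$ in the ordering $\prec$ completely free; your modulus fixes every digit of every other prime. With that modulus, the resulting family does not cover $\Z$: any $z$ that is nonzero modulo $q^{\gamma_q}$ for two distinct primes $q$ lies in none of the progressions (each progression centres the $q'$-components, $q'\ne p$, at $0$, and $\{0\bmod N\}$ misses $z$ as well). The correct modulus is $p^e\prod_q q^{h_q}$, where $h_q\in\{0,\dots,|J_q(p,e)|\}$ is the size of an initial segment of $\{(q,f):(q,f)\prec(p,e)\}$ that you choose to fix.

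The actual source of entropy in the paper's Lemma~\ref{lem:general:lower:bound} is precisely this choice across \emph{other} primes: for a hyperplane at $(p,e)$ you choose, independently for each $q\ne p$, which initial segment of $J_q(p,e)$ to fix to $0$. This gives $\prod_{q\ne p}\bigl(|J_q(p,e)|+1\bigr)$ choices, and the telescoping identity $\log(g+1)=\sum_{f=1}^g\log\tfrac{f+1}{f}$ is where the summands $\log\tfrac{f+1}{f}$ (and ultimately $\tau$) come from. The optimization is then not a Lagrange-multiplier computation on exponents $\gamma_p$ in isolation, but a choice of $N$ \emph{and} a carefully designed arithmetic ordering $\prec$ on $\langle N\rangle$ interleaving primes and exponent levels (the paper's $y_{p,e}=(p-1)/\log\tfrac{e+1}{e}$ ordering), followed by an asymptotic evaluation of $Q(N,\prec)$ via the prime number theorem. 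So while your endgame (primes up to $y\asymp(n\log n)^{1/2}$, scaling $n^{3/2}/(\log n)^{1/2}$) is in the right ballpark, the argument as written has a genuine gap: the claimed per-level freedom does not exist, the moduli are wrong, and the family is not a cover.
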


We shall first prove Proposition~\ref{lem:NumbOfFrames} for an infinite sequence of values of $n$ (see~\eqref{eq:n:and:N}); since this sequence will be sufficiently dense, it will then be easy to deduce the bound for the remaining values of $n$. For each $n$ in our sequence, we will choose a single value of $N$, and count only covering systems $\cA$ of size $n$ with $\lcm(\cA) = N$. We will moreover count only covering systems that correspond to simple frames of a certain family of sets (see below), with a specific (carefully chosen) order, see Definition~\ref{def:the:lower:bound:order}. We remark that when $N$ is not square-free, this is not quite as straightforward as counting the simple frames, since there will exist hyperplanes that do not correspond to arithmetic progressions in $\Z_N$. In order to characterise the hyperplanes that do, we need to introduce a little notation. 

Given $N = p_1^{\g_1} \cdots p_m^{\g_m} > 1$, we define 
\begin{equation}\label{def:bracketsN}
\langle N \rangle := \bigcup_{i=1}^m \big\{ (p_i,j) : j \in [\g_i] \big\},
\end{equation}
and set $S_{(p,e)} = \{0,\ldots,p-1\}$ for each $(p,e) \in \< N \>$. Now define a map
$$\varphi_N \colon \Z_N \to S_{\< N \>} = \prod_{(p,e) \in \< N \>} S_{(p,e)}$$
as follows: if $x \in \Z_N$, then $y = \varphi_N(x) \in S_{\< N \>}$ is the vector such that, for each $(p,e) \in \< N \>$, $y_{(p,e)} \in S_{(p,e)}$ is the coefficient of $p^{e-1}$ in the $p$-ary expansion of $x$ modulo $p^e$. Observe that $\varphi_N$ is a bijection, by the Chinese Remainder Theorem. 

We say that a hyperplane $H$ in $S_{\< N \>}$ is \emph{arithmetic} if $\varphi_N^{-1}(H)$ is an arithmetic progression in $\Z_N$. The following observation provides a simple characterization of arithmetic hyperplanes. 



\begin{observation}\label{obs:arithmetic:hyperplanes}
A hyperplane $H$ in $S_{\< N \>}$ is arithmetic if and only if, for each prime $p$, the set 
$$\big\{ (p,e) \in \< N \> : (p,e) \in F(H) \big\}$$ 
forms a (possibly empty) initial segment of the sequence $(p,1), (p,2), (p,3), \ldots$
\end{observation}

{\setstretch{1.12}

\begin{proof}
Suppose first that $H$ is arithmetic, so $A := \varphi_N^{-1}(H)$ is an arithmetic progression in $\Z_N$. Let $d$ be the modulus of $A$, and observe that $(p,e) \in F(H)$ if and only if $p^e$ divides $d$, by the definition of $\varphi_N$. On the other hand, if $\{(p,1),\ldots,(p,e(p))\} \subset F(H)$ and $(p,e') \not\in F(H)$ for all $e' > e(p)$, then every pair of points of $\varphi_N^{-1}(H)$ differs by a multiple of $p^{e(p)}$, and therefore $\varphi_N^{-1}(H)$ is contained in an arithmetic progression with modulus $d = \prod_p p^{e(p)}$. Since $|H| = N/d$, it follows that $\varphi_N^{-1}(H)$ is in fact the entire arithmetic progression, as claimed.
\end{proof}

Let us now say that a total ordering $\prec$ on the elements of $\langle N \rangle$ is 
\emph{arithmetic} if 
\begin{equation}\label{def:arithmetic:ordering}
(p_i,1) \prec (p_i,2) \prec \cdots \prec (p_i,\gamma_i)
\end{equation}
for all $i \in [m]$. Note that~\eqref{def:arithmetic:ordering} does not impose any constraint on $\prec$ for different primes, and in particular we may have $(p,i) \prec (q,j) \prec (p,i+1)$. We say that a simple frame of $S_{\< N \>}$ is `arithmetic' if the order of the sets is arithmetic, and if moreover each of the hyperplanes of the frame is arithmetic. We can now prove the following lower bound on the number of minimal covering systems of $\Z$ of size $n$.

\begin{lemma}\label{lem:general:lower:bound}
Let $N = p_1^{\g_1} \cdots p_m^{\g_m} > 1$, and let $\prec$ be an arithmetic ordering of $\< N \>$. There are at least 
$$\exp\bigg( \sum_{(p,e) \in \langle N \rangle } (p-1) \sum_{\substack{(q,f) \prec (p,e) \\ q \ne p}} \log\left( \frac{f+1}{f} \right) \bigg)$$ 
minimal covering systems of $\Z$ of size $n := \sum_{i = 1}^m \gamma_i (p_i - 1) + 1$. 
\end{lemma}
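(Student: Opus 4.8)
The plan is to exhibit an explicit, injectively parametrised family of minimal covering systems of $\Z$ of size $n$, generalising the frame construction of the introduction from squarefree moduli to prime powers, with the ordering $\prec$ playing the role of the coordinate order. Identify $\Z_N$ with $\prod_{i=1}^m \Z_{p_i^{\g_i}}$ by the Chinese Remainder Theorem and each $\Z_{p^\g}$ with $(\Z_p)^\g$ by base-$p$ expansion, so that the pair $(p,e) \in \langle N \rangle$ names the coefficient of $p^{e-1}$; call this the \emph{$(p,e)$-digit}. For $(p,e) \in \langle N \rangle$ and a prime $q \ne p$ dividing $N$ set $g_q(p,e) := \big|\{\, f : (q,f) \prec (p,e) \,\}\big|$; since $\prec$ is arithmetic, $\{\, f : (q,f) \prec (p,e) \,\} = \{1,\dots,g_q(p,e)\}$. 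Call $c=(c_q)_{q\ne p}$ \emph{admissible for $(p,e)$} if $0 \le c_q \le g_q(p,e)$ for every prime $q\ne p$, and to a quadruple $(p,e,a,c)$ with $a \in \{1,\dots,p-1\}$ and $c$ admissible for $(p,e)$ associate the arithmetic progression $H(p,e,a,c)$ of modulus $p^{e}\prod_{q\ne p}q^{c_q}$ whose residue is $\equiv a\,p^{e-1}\bmod p^{e}$ and $\equiv 0\bmod q^{c_q}$ for each $q\ne p$ (unique mod the modulus, by CRT). This is a genuine arithmetic progression with modulus dividing $N$, since the digits it fixes form, for each prime, an initial segment: $(p,1),\dots,(p,e)$ for $p$ and $(q,1),\dots,(q,c_q)$ for $q\ne p$. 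Finally, for any function $\mathbf c$ assigning to each $\big((p,e),a\big)$ an admissible $\mathbf c(p,e,a)$, put
\[
\cA(\mathbf c) \;:=\; \big\{\, 0 \bmod N \,\big\} \;\cup\; \big\{\, H\big(p,e,a,\mathbf c(p,e,a)\big) : (p,e) \in \langle N \rangle,\ 1 \le a \le p-1 \,\big\},
\]
which has $\sum_{(p,e)\in\langle N\rangle}(p-1) + 1 = \sum_{i=1}^m \g_i(p_i-1)+1 = n$ members.

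Next I would verify that $\cA(\mathbf c)$ is a minimal covering system, exactly as in the frame observation of Section~\ref{structure:sec} but with $\prec$ in place of the coordinate order. For covering: given $x \in \Z_N\setminus\{0\}$, let $(p,e)$ be the $\prec$-minimal pair whose $(p,e)$-digit of $x$ is nonzero, say equal to $a$; then $x \in H\big(p,e,a,\mathbf c(p,e,a)\big)$, because every digit of $x$ that is $\prec$-earlier than $(p,e)$ vanishes, and --- crucially using that $\prec$ is arithmetic --- these include the digits $(p,1),\dots,(p,e-1)$ together with $(q,1),\dots,(q,c_q)$ for $q\ne p$, which are precisely the digits that $H\big(p,e,a,\mathbf c(p,e,a)\big)$ fixes to $0$. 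For minimality: $0$ lies in no $H(p,e,a,c)$ (its $(p,e)$-digit would have to equal $a\ne 0$), so deleting $\{0\bmod N\}$ uncovers $0$; and for each $(p,e,a)$ the point whose only nonzero digit is $a$ in position $(p,e)$ lies in no other member of $\cA(\mathbf c)$, so deleting $H\big(p,e,a,\mathbf c(p,e,a)\big)$ uncovers it.

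Then I would count and check injectivity. For fixed $(p,e)$ there are $\prod_{q\ne p}\big(g_q(p,e)+1\big)$ admissible $c$, and telescoping gives $\log\big(g_q(p,e)+1\big) = \sum_{f=1}^{g_q(p,e)}\log\frac{f+1}{f} = \sum_{f:(q,f)\prec(p,e)}\log\frac{f+1}{f}$, hence $\prod_{q\ne p}\big(g_q(p,e)+1\big) = \exp\big(\sum_{(q,f)\prec(p,e),\,q\ne p}\log\frac{f+1}{f}\big)$; so the number of functions $\mathbf c$ is
\[
\prod_{(p,e)\in\langle N\rangle} \Big(\, \prod_{q\ne p}\big(g_q(p,e)+1\big) \Big)^{p-1} \;=\; \exp\bigg( \sum_{(p,e)\in\langle N\rangle}(p-1)\sum_{\substack{(q,f)\prec(p,e)\\ q\ne p}}\log\frac{f+1}{f} \bigg).
\]
To see that distinct $\mathbf c$ give distinct systems it suffices to recover $(p,e,a,c)$ from the progression $H(p,e,a,c)$ alone: its modulus $D$ has $p$-exponent $e$ and $q$-exponent $c_q$ once $p$ is known; and $p$ is the unique prime dividing $D$ for which the corresponding component of the residue (under $\Z_N\cong\prod_i\Z_{p_i^{\g_i}}$) is not divisible by the $p$-part of $D$ --- nonzero because $a\ne 0$ --- after which $a$ is the top nonzero digit of that component. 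The progression $\{0\bmod N\}$ is separated from every $H(p,e,a,c)$ since its residue is $0$ while $H(p,e,a,c)$ has modulus $N$ only if its residue is $\equiv a p^{\g_p-1}\not\equiv 0 \bmod p^{\g_p}$. Thus $\mathbf c\mapsto\cA(\mathbf c)$ is injective, giving the claimed lower bound.

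I expect the only points requiring care to be bookkeeping: confirming that the prescribed residues genuinely define arithmetic progressions with modulus dividing $N$ --- this, and the covering step, are exactly where the arithmeticity of $\prec$ in~\eqref{def:arithmetic:ordering} is used --- and the recovery/injectivity argument; everything else is a direct transcription of the frame construction together with the elementary telescoping identity.
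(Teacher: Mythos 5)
Your proof is correct and takes essentially the same approach as the paper: both identify $\Z_N$ with $\prod_{(p,e)\in\langle N\rangle} S_{(p,e)}$ via the Chinese Remainder Theorem and base-$p$ expansion, construct simple frames centred at $(0,\ldots,0)$ in the $\prec$-order, and observe that the hyperplanes corresponding to arithmetic progressions are exactly those whose fixed coordinates for each prime $q$ form an initial segment $\{(q,1),\ldots,(q,c_q)\}$, giving $\prod_{q\ne p}\big(g_q(p,e)+1\big)$ choices per hyperplane and the claimed exponential count via the same telescoping identity. The only difference is one of presentation: you verify covering, minimality, and injectivity from scratch, whereas the paper delegates the first two to the general observation following Definition~\ref{def:frame} and leaves injectivity implicit.
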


\begin{proof}
To prove the lemma we count arithmetic frames of $S_{\< N \>}$ centred at $(0,\ldots,0)$, where the sets $S_{(p,e)}$ are listed in the order $\prec$. Recall from Definition~\ref{def:frame} that, for each $(p,e) \in \< N \>$ and each $a \in \{1,\ldots,p-1\}$, we need to choose an arithmetic hyperplane of the form
$$\big[ x_1,\ldots,x_{i-1},a,*,\cdots,*\big],$$
with $x_j \in \{ 0, * \}$ for each $j \in [i-1]$, where $(p,e)$ is the $i$th element in the ordering $\prec$. To do so, we will choose, for each prime $q \ne p$, an initial segment (in the order $\prec$) of the set 
$$J_q(p,e) := \big\{ (q,f) \in \< N \> : (q,f) \prec (p,e) \big\}$$ 
set $x_j = 0$ for the corresponding coordinates, and set $x_j = *$ for all other elements of $J_q(p,e)$. If we also set $x_j = 0$ for every $j \in J_p(p,e)$ then, by Observation~\ref{obs:arithmetic:hyperplanes}, every hyperplane obtained in this way will be arithmetic, and therefore the frame we construct will be arithmetic. We claim that each such choice gives a different minimal covering system of $\Z$ of size $n$. 

To see this, note that the frame consists of $n-1$ arithmetic hyperplanes, each of which corresponds (via the bijection $\varphi_N^{-1}$) to an arithmetic progression in $\Z_N$. Moreover, the only element of $\Z_N$ not covered by these arithmetic progressions is $0$, so adding this progression gives a covering system of $\Z$ of size $n$, and (as observed after Definition~\ref{def:frame}) if we remove the hyperplane $H = \big[ x_1,\ldots,x_{i-1},a,*,\cdots,*\big]$, then the element $(0,\ldots,0,a,0,\ldots,0)$ will be uncovered by the remaining hyperplanes, so the covering system we have constructed is minimal. Finally, each hyperplane in the frame has a unique entry $a \not\in \{ 0, * \}$, and therefore each choice leads to a distinct covering system, as claimed.

Finally, since we have exactly
$$\prod_{q \ne p} \big( |J_q(p,e)| + 1 \big) = \exp\bigg( \sum_{\substack{(q,f) \prec (p,e) \\ q \ne p}} \log\left( \frac{f+1}{f} \right) \bigg)$$ 
choices for each hyperplane corresponding to $(p,e)$, the lemma follows.
\end{proof}

}
{\setstretch{1.25}

Now, for each arithmetic ordering $\prec$ of $\< N \>$, let us define 
\begin{equation} \label{equ:defofQord}
Q(N, \prec) := \sum_{(p,e) \in \langle N \rangle } (p-1) \sum_{\substack{(q,f) \prec (p,e) \\ q \ne p}}  \log\left( \frac{f+1}{f} \right). 
\end{equation}
We will use the following particular arithmetic ordering $<$ to prove Proposition~\ref{lem:NumbOfFrames}.

\begin{definition}\label{def:the:lower:bound:order}
For each prime $p$ and integer $e \in \N$, set $y_{p,e} := (p-1)\big( \log\frac{e+1}{e} \big)^{-1}$. Now, given primes $p$ and $q$, and integers $e,f \in \N$, define 
$$(q,f) < (p,e) \qquad \Leftrightarrow \qquad y_{q,f} < y_{p,e}.$$
Moreover, if $x \in \R$ then we write $(p,e) < x$ if and only if $y_{p,e} < x$, and define
\begin{equation}\label{eq:n:and:N}
n(x) := 1 + \sum_{(p,e) < x} (p-1) \qquad \text{and} \qquad N(x) := \prod_{(p,e) < x} p.
\end{equation}
\end{definition}

Note that $n(x),N(x) < \infty$ for every $x \in \R$, and that for any $N \in \N$, the ordering $<$ on $\< N \>$ is arithmetic. Our next lemma, combined with Lemma~\ref{lem:general:lower:bound}, implies Proposition~\ref{lem:NumbOfFrames}. 

\begin{lemma}\label{lem:calculating:Q}
Let $x > 0$, and set $N = N(x)$ and $n = n(x)$. Then  
$$Q(N,<) = \left( \frac{4\sqrt{\tau}}{3} + o(1)\right) \frac{n^{3/2}}{(\log n)^{1/2}}$$ 
as $x \to \infty$.
\end{lemma}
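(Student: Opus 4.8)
The goal is an asymptotic evaluation of the double sum $Q(N,<)$ defined in \eqref{equ:defofQord}, where the ordering $<$ is the one driven by the weights $y_{p,e} = (p-1)(\log\frac{e+1}{e})^{-1}$, and $N = N(x)$, $n = n(x)$. The key observation is that the ordering $<$ is precisely designed so that $(q,f) < (p,e)$ exactly when $y_{q,f} < y_{p,e}$, and since $N = N(x)$ consists of all pairs with $y$-value below $x$, the sum $Q(N,<)$ becomes
\[
Q(N(x),<) = \sum_{(p,e)\,:\,y_{p,e}<x} (p-1) \sum_{\substack{(q,f)\,:\,y_{q,f}<y_{p,e}\\ q\ne p}} \log\frac{f+1}{f}.
\]
First I would dispose of the $q\ne p$ restriction: the terms with $q=p$ contribute, for a fixed $(p,e)$, at most $(p-1)\sum_{f<e}\log\frac{f+1}{f} = (p-1)\log e$, and summing $(p-1)^2\log e$ over the relevant range is lower order (one should check this against the main term of order $n^{3/2}(\log n)^{-1/2}$; it is smaller by roughly a factor $\sqrt{\log n}$, since each prime contributes only $O(\log p)$ values of $e$ in range rather than a full comparison against all lighter pairs). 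So up to lower-order error we may sum over all pairs $(q,f)$ with $y_{q,f} < y_{p,e}$.

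\textbf{The continuous approximation.} Define the weight function $W(y) := \sum_{(q,f)\,:\,y_{q,f}<y} \log\frac{f+1}{f}$ and recall $n(y) = 1 + \sum_{(p,e)\,:\,y_{p,e}<y}(p-1)$. Then
\[
Q(N(x),<) \;\approx\; \sum_{(p,e)\,:\,y_{p,e}<x} (p-1)\, W(y_{p,e}) \;=\; \int_0^x W(y)\,\mathrm{d}n(y),
\]
the last equality being essentially the definition of the Stieltjes integral against the step function $n$. The plan is now to get sharp asymptotics for $n(y)$ and $W(y)$ as $y\to\infty$. For $n(y)$: the pair $(p,e)$ satisfies $y_{p,e}<y$ iff $p-1 < y\log\frac{e+1}{e} \approx y/e$, so for each $e$ the primes $p$ in range are those up to roughly $y/e$, contributing $\sum_{p\le y/e}(p-1) \sim \frac{1}{2}(y/e)^2/\log(y/e) \sim \frac{y^2}{2e^2\log y}$ by the prime number theorem (valid while $e = o(y^{1-\eps})$, and the tail $e$ large is negligible). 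Summing over $e\ge 1$ gives $n(y) \sim \frac{y^2}{2\log y}\sum_{e\ge1}\frac1{e^2} = \frac{\pi^2 y^2}{12\log y}$; one must be a little careful to replace $\log(y/e)$ by $\log y$ uniformly, which costs only a $(1+o(1))$ factor. For $W(y)$: similarly $W(y) = \sum_e \log\frac{e+1}{e}\cdot\#\{p : p-1 < y\log\frac{e+1}{e}\} \sim \sum_e \log\frac{e+1}{e}\cdot\frac{y\log\frac{e+1}{e}}{\log y} = \frac{y}{\log y}\sum_{e\ge1}\big(\log\frac{e+1}{e}\big)^2 = \frac{\tau y}{\log y}$. \textbf{This is exactly where the constant $\tau$ enters}, and it is the whole point of choosing the weights $y_{p,e}$ to be $(p-1)$ divided by $\log\frac{e+1}{e}$: that choice makes the ``mass'' contributed by each $e$ proportional to $\big(\log\frac{e+1}{e}\big)^2$.

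\textbf{Assembling the integral, and the main obstacle.} With $n(y) \sim c_1 y^2/\log y$ where $c_1 = \pi^2/12$, and $W(y) \sim \tau y/\log y$, I would change variables in $\int_0^x W(y)\,\mathrm{d}n(y)$. Writing $m := n(x)$ so $x \sim (m\log x / c_1)^{1/2}$ and, self-consistently, $\log x \sim \frac12\log m$, we get $x \sim (2m\log m \cdot \tfrac{1}{2c_1})^{1/2}\cdot(1+o(1))$ — I would be careful here to iterate the log asymptotics once to pin down $\log x \sim \frac12\log n$. Then $\mathrm{d}n(y) \sim 2c_1 y/\log y\,\mathrm{d}y$ (again up to $1+o(1)$, after checking that differentiating the $\log y$ in the denominator is lower order), so
\[
\int_0^x W(y)\,\mathrm{d}n(y) \;\sim\; \int_0^x \frac{\tau y}{\log y}\cdot\frac{2c_1 y}{\log y}\,\mathrm{d}y \;=\; 2 c_1\tau\int_0^x \frac{y^2}{(\log y)^2}\,\mathrm{d}y \;\sim\; 2c_1\tau\cdot\frac{x^3}{3(\log x)^2},
\]
using that $\int^x y^2/(\log y)^2\,\mathrm{d}y \sim x^3/(3(\log x)^2)$. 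Now substitute $x^2 \sim n\log x/c_1 \sim n\log n/(2c_1)$, i.e. $x \sim (n\log n/(2c_1))^{1/2}$ and $(\log x)^2 \sim \frac14(\log n)^2$, giving
\[
Q(N,<) \;\sim\; \frac{2c_1\tau}{3}\cdot\frac{(n\log n/(2c_1))^{3/2}}{\frac14(\log n)^2} \;=\; \frac{2c_1\tau}{3}\cdot\frac{4}{(2c_1)^{3/2}}\cdot\frac{n^{3/2}}{(\log n)^{1/2}} \;=\; \frac{8\tau}{3\sqrt{2c_1}}\cdot\frac{n^{3/2}}{(\log n)^{1/2}}.
\]
Finally I would check that $8\tau/(3\sqrt{2c_1})$ collapses to $\frac{4\sqrt\tau}{3}$ — but wait, that identity only holds if $c_1$ and $\tau$ satisfy a specific relation, so \textbf{the genuinely delicate point is that $c_1$ must not be $\pi^2/12$ but rather $\tau/2$}: the correct computation of $n(y)$ must weight each $e$ by $(p-1)$-mass proportional to $(y\log\frac{e+1}{e})^2/(2\log y)$, and $\sum_e\big(\log\frac{e+1}{e}\big)^2 = \tau$, so in fact $n(y) \sim \frac{\tau y^2}{2\log y}$, i.e. $c_1 = \tau/2$. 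Then $8\tau/(3\sqrt{2\cdot\tau/2}) = 8\tau/(3\sqrt\tau) = \frac{8\sqrt\tau}{3}$ — still a factor $2$ off, which I would track down by being more careful about whether $\mathrm{d}n(y)$ picks up the factor $2$ or not and about the exact power of $\log x$ versus $\log n$; the arithmetic must be done honestly rather than sketched. In any case, the structure of the argument is: (1) drop the $q=p$ terms and the off-diagonal error; (2) prove $n(y) = (\tfrac{\tau}{2}+o(1))y^2/\log y$ and $W(y) = (\tau+o(1))y/\log y$ via the prime number theorem and $\sum_e(\log\frac{e+1}{e})^2 = \tau$; (3) evaluate the Stieltjes integral $\int_0^x W\,\mathrm{d}n \sim \frac{\tau^2}{3}x^3/(\log x)^2$; (4) invert $n = n(x)$ to express everything in $n$, using $\log x \sim \frac12\log n$. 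The main obstacle is controlling all the $(1+o(1))$ errors uniformly — particularly replacing $\log(y/e)$ by $\log y$ across the whole range of $e$, and handling the tail of large $e$ and large $p$ — and, just as much, doing the closing arithmetic of constants and logarithm-powers correctly so that the final constant is exactly $4\sqrt\tau/3$ rather than something proportional to it.
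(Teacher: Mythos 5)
Your approach matches the paper's: both use the prime number theorem to estimate the inner sum as $W(y_{p,e}) \sim \tau y_{p,e}/\log y_{p,e}$ (the paper does the double sum over $(p,e)$ directly, summing first over $p$ for fixed $e$ and then over $e$; your Stieltjes-integral framing $\int_0^x W\,\mathrm{d}n$ is the same computation), both identify $n(y) \sim (\tau/2)\,y^2/\log y$ and $\log x \sim \tfrac12\log n$, and both arrive at $Q(N,<) \sim \tau^2 x^3/(3(\log x)^2)$. Your lingering ``factor $2$ off'' is purely an arithmetic slip in the final line: $\tfrac{2c_1\tau}{3}\cdot\tfrac{4}{(2c_1)^{3/2}} = \tfrac{8c_1\tau}{3\,(2c_1)\sqrt{2c_1}} = \tfrac{4\tau}{3\sqrt{2c_1}}$ (not $\tfrac{8\tau}{3\sqrt{2c_1}}$), and with $c_1=\tau/2$ this is exactly $\tfrac{4\tau}{3\sqrt{\tau}} = \tfrac{4\sqrt{\tau}}{3}$, as required.
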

 
\begin{proof}
Recalling the definition of $Q(N,<)$, observe first that, for each $(p,e) \in \< N \>$,
\begin{equation}\label{eq:fix:p:sum:over:f}
\sum_{\substack{(q,f) < (p,e) \\ q \ne p}} \log\left( \frac{f+1}{f} \right) = \sum_{f\geq 1} \left|\left\lbrace q \ne p : q - 1 < y_{p,e} \log\left( \frac{f+1}{f} \right) \right\rbrace\right| \cdot \log\left( \frac{f+1}{f} \right).
\end{equation}
Now, by the prime number theorem, for each fixed $f \in \N$ and as $y_{p,e} \to \infty$,
$$\left|\left\lbrace q \ne p : q - 1 < y_{p,e} \log\left( \frac{f+1}{f} \right) \right\rbrace\right| = \big( 1 + o(1) \big) \frac{y_{p,e} \log\big( \frac{f+1}{f} \big)}{\log\big( y_{p,e} \log\big( \frac{f+1}{f} \big) \big)}.$$
Moreover, the sum in~\eqref{eq:fix:p:sum:over:f} of the terms with $f \ge f_0$ is $o\big( y_{p,e} / \log y_{p,e} \big)$ as $f_0 \to \infty$, so
$$\sum_{\substack{(q,f) < (p,e) \\ q \ne p}} \log\left( \frac{f+1}{f} \right) = \frac{\big( 1 + o(1) \big) y_{p,e}}{\log y_{p,e}} \sum_{f \ge 1} \left(\log \frac{f+1}{f}\right)^2 = \big( \tau + o(1) \big) \frac{y_{p,e}}{\log y_{p,e}}$$
as $y_{p,e} \to \infty$. We next fix $e \in \N$, and sum over primes $p$. We obtain
\begin{align}
\sum_{p \,:\, y_{p,e} < x} (p-1) \sum_{\substack{(q,f) < (p,e) \\ q \ne p}} \log\left( \frac{f+1}{f} \right) \nonumber
& \, = \sum_{p - 1 \,<\, x\log\frac{e+1}{e}} \big( \tau + o(1) \big)  \frac{(p-1)^2}{\log \big( \frac{e+1}{e} \big) \cdot \log y_{p,e}}\\
& \, = \big( \tau + o(1) \big)\frac{x^3 \left(\log\frac{e+1}{e}\right)^2}{3(\log x)^2} \label{eq:fix:e:sum:over:p}
\end{align}
as $x \to \infty$, again using the prime number theorem.\footnote{Indeed, the prime number theorem implies that $\sum_{p < z} p^2 = \big(1/3 + o(1) \big) z^3 / \log z$ as $z \to \infty$.} Thus, summing over $e$, and noting that the left-hand side of~\eqref{eq:fix:e:sum:over:p} is uniformly bounded from above by an absolute constant times the right-hand side (without the $o(1)$ term), we obtain
$$Q(N,<) = \big( \tau + o(1) \big) \frac{x^3}{3(\log x)^2} \sum_{e \ge 1}\left(\log\frac{e+1}{e}\right)^2 = \big( \tau^2 + o(1) \big) \frac{x^3}{3(\log x)^2}$$
as $x \to \infty$. Finally, using the prime number theorem a third time, we obtain 
\begin{align}
n(x) & \, = 1 + \sum_{e \ge 1} \sum_{p - 1 \,<\, x\log\frac{e+1}{e}} (p - 1) = \big(1+o(1) \big)\sum_{e \ge 1} \frac{\left(x\log\frac{e+1}{e} \right)^2 }{2\log\left(x\log\frac{e+1}{e}\right)} \nonumber\\
& \, = \big(1+o(1) \big) \frac{x^2}{2\log x}\sum_{e \ge 1} \left(\log \frac{e+1}{e} \right)^2 = \big( \tau + o(1) \big)\frac{x^2}{2\log x},\label{eq:n:of:x}
\end{align}
and hence $Q(N,<) \cdot n^{-3/2} \sqrt{\log n} \to \sqrt{2} (\tau^2 / 3) (\tau/2)^{-3/2} = 4\sqrt{\tau} / 3$ as $x \to \infty$, as claimed.
\end{proof}

We can now easily deduce the lower bound in Theorem~\ref{thm:MainCountingThm}, the only remaining difficulty being to deal with those $n \in \N$ that are not of the form $n = n(x)$ for some $x \in \R$.

\begin{proof}[Proof of Proposition~\ref{lem:NumbOfFrames}]
It follows immediately from Lemmas~\ref{lem:general:lower:bound} and~\ref{lem:calculating:Q} that the number of minimal covering systems of $\Z$ of size $n(x)$ is at least 
$$e^{Q(N(x),<)} = \exp\left( \left( \frac{4\sqrt{\tau}}{3} + o(1)\right)\frac{n(x)^{3/2}}{(\log n(x))^{1/2}} \right)$$ 
as $x \to \infty$. Let $x > 0$ be maximal such that $n(x) \le n$, and set $t := n - n(x)$. Observe that $t < x = o(n)$, by~\eqref{eq:n:of:x}, and that, by removing the hyperplane $[0,\ldots,0]$ (i.e., the progression $\{ 0 \pmod N \}$) from the construction given in the proof of Lemma~\ref{lem:general:lower:bound}, we obtain a family of minimal covers of $\Z_N \setminus \{0\}$ of size $n(x) - 1$. We complete each to a minimal cover of $\Z$ of size  $n$ by adding the progressions $\big\{ 2^{\ell-1} N \pmod {2^\ell N} \big\}$, for each $\ell \in [t]$, and $\big\{ 0 \pmod {2^t N} \big\}$. We obtain a family of 
$$\exp\left( \left( \frac{4\sqrt{\tau}}{3} + o(1)\right)\frac{n^{3/2}}{(\log n)^{1/2}} \right)$$
minimal covering systems of $\Z$ of size $n(x) + t = n$, as required.
\end{proof}

}

\section{Counting coverings that are far from frames}\label{counting:weird:frames:sec}

{\setstretch{1.12}
In this section we will begin the deduction of Theorem~\ref{thm:MainCountingThm} from Theorem~\ref{thm:mainStructureThm} by bounding the number of minimal covers that fail to satisfy~\eqref{eq:structure:assumption}. In the process, we will obtain a short proof of weaker version of Theorem~\ref{thm:MainCountingThm}, bounding the number of minimal covering systems of $\Z$ of size $n$ up to a constant factor in the exponent.

\begin{prop}\label{lem:LargemCrudeBound} 
Let $C > 0$ be a constant, and let $n \in \N$ and $N = p_1^{\g_1} \cdots p_m^{\g_m}$ satisfy 
$$n > C \sum_{i=1}^m \gamma_i \big( p_i - 1 \big).$$
Then the number of minimal covering systems $\cA$ of $\Z$ of size $n$ with $\lcm(\cA) = N$ is at most
$$\exp\left( \left( \frac{2\sqrt{\tau}}{\sqrt{C}} + o(1)\right)\frac{n^{3/2}}{(\log n)^{1/2}} \right)$$
as $n \to \infty$.
\end{prop}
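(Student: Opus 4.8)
The plan is to bound, for each fixed $N = p_1^{\g_1}\cdots p_m^{\g_m}$ with $n > C\sum_i \g_i(p_i-1)$, the number of minimal covering systems of $\Z$ of size $n$ with $\lcm(\cA) = N$, and then sum over the (relatively few) admissible $N$. The key point is that such a cover $\cA$ lives inside $\Z_N \cong \prod_{i} \Z_{p_i^{\g_i}}$, which via base-$p$ expansion we identify with the product of $k := \sum_i \g_i$ sets $S_1,\ldots,S_k$, where $\g_i$ of the $S_j$'s are equal to $\{0,\ldots,p_i-1\}$. By Simpson's theorem (Theorem~\ref{Simpson:thm}) we have $n = |\cA| \ge \sum_{j=1}^k(|S_j|-1) + 1 = \sum_i \g_i(p_i-1) + 1$, so the hypothesis $n > C\sum_i \g_i(p_i-1)$ forces $\sum_j(|S_j|-1) < n/C$, i.e.\ the product space is ``small'' relative to $n$. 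The counting then reduces to: (1) choosing $N$; (2) choosing, for a cover of size $n$ in a product of $k$ coordinate-sets with $\sum(|S_j|-1) < n/C$, each of the $n$ hyperplanes.

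First I would count the number of arithmetic progressions whose modulus divides $N$: each such progression is determined by a divisor $d \mid N$ (the modulus) and a residue in $[d]$, so there are at most $\sum_{d \mid N} d = O(N)$ of them, hence the number of covers of size $n$ with $\lcm$ dividing $N$ is at most $N^n = e^{n\log N}$. So the whole bound comes down to estimating $\log N$ in terms of $n$ under the constraint $\sum_i \g_i(p_i - 1) < n/C$. To maximise $\log N = \sum_i \g_i \log p_i$ subject to this constraint, one is led (by exactly the kind of extremal/Lagrange-multiplier analysis already carried out for the lower bound in Lemma~\ref{lem:calculating:Q}) to the ordering $<$ defined by $y_{p,e} = (p-1)(\log\tfrac{e+1}{e})^{-1}$: the ``most efficient'' way to accumulate $\log N$ per unit of $\sum \g_i(p_i-1)$ is to include the pairs $(p,e)$ with $y_{p,e}$ smallest. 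Setting $s := \sum_i \g_i(p_i-1) < n/C$ and running the prime-number-theorem computation from the proof of Lemma~\ref{lem:calculating:Q} in reverse — there $s = n(x) - 1 = (\tau/2 + o(1))\tfrac{x^2}{\log x}$ and $\log N(x) = \sum_{(p,e)<x}\log p = (1+o(1))\sum_{(p,e)<x} x\log\tfrac{e+1}{e}\cdot \tfrac{1}{\log x}\cdot\log p /(p-1)\cdots$ — one gets $\log N \le (1+o(1))\,\tfrac{4}{3}\sqrt{\tfrac{2}{\tau}}\, s^{3/2}(\log s)^{-1/2}$ for any admissible configuration, uniformly. Substituting $s < n/C$ and simplifying the constant gives $n\log N \le \big(\tfrac{2\sqrt\tau}{\sqrt C} + o(1)\big)\tfrac{n^{3/2}}{(\log n)^{1/2}}$, which is exactly the claimed bound.

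Finally I would dispose of the two loose ends. First, the number of choices of $N$: since $\log N = O(n^{3/2}/(\log n)^{1/2})$ and $N$ has at most $O(n)$ prime factors with multiplicity, the number of admissible $N$ is at most $e^{o(n^{3/2}/(\log n)^{1/2})}$, so summing over $N$ only affects the $o(1)$. Second, I must check the extremal estimate for $\log N$ is genuinely uniform over all valid $(\g_i, p_i)$ — not just for the specific sequence $N(x)$ — which is the place requiring a little care: one argues that replacing any pair $(p,e)$ present in the configuration by a pair with smaller $y$-value does not decrease $\log N$ per unit cost, so the greedy/initial-segment configuration in the $<$ order is optimal, and its value is computed by the PNT asymptotics above.

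The main obstacle I expect is precisely this uniform optimisation step: turning ``the greedy choice along $<$ is best'' into a clean inequality $\log N \le f(s)$ valid for every admissible $(N,n)$, with the right leading constant and a genuine $o(1)$ error as $n\to\infty$ (including handling configurations where some prime $p$ is much larger than $x$, contributing little to $s$ but potentially a lot to $\log N$ per prime — these are in fact suppressed by the $y_{p,e}$ ordering, but this needs to be spelled out). Everything else — the $N^n$ bound, the count of admissible $N$, the PNT computation — is routine, much of it already done for Proposition~\ref{lem:NumbOfFrames}.
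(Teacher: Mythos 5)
Your plan hinges on the bound: ``the number of covers of size $n$ with $\lcm$ dividing $N$ is at most $N^n = e^{n\log N}$.'' This is the step that does not work, and it is not merely a matter of tightening constants — it loses polylogarithmic factors in the exponent. For the optimal configuration $N = N(x)$ one has $\log N(x) = \sum_{(p,e)<x}\log p \sim x\log x$, and since $s = n(x) - 1 \sim (\tau/2)\, x^2/\log x$, this gives $\log N \sim \sqrt{(2/\tau)\, s\log s}$, i.e.\ $\log N = \Theta\bigl((n\log n)^{1/2}\bigr)$ when $s = \Theta(n)$. (Incidentally, the asymptotic $\tfrac{4}{3}\sqrt{2/\tau}\, s^{3/2}(\log s)^{-1/2}$ that you quote is the growth rate of $Q(N,<)$ from Lemma~\ref{lem:calculating:Q}, not of $\log N$ — those are different quantities.) Either way, $n\log N$ is of order $n^{3/2}(\log n)^{3/2}$, which exceeds the target $n^{3/2}(\log n)^{-1/2}$ by a factor of $(\log n)^2$. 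So the ``choose $n$ progressions from the $O(N)$ available'' bound is genuinely too crude.

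What the paper does instead is decouple the moduli from the shifts. For each of the $n$ progressions there are at most $\prod_i(\g_i+1)$ choices of modulus, and Lemma~\ref{lem:numSubGroups} applied with $M = n/C$ bounds $\sum_i\log(\g_i+1)$ by $\bigl(2\sqrt\tau + o(1)\bigr)(M/\log M)^{1/2}$; this already gives the correct exponent $\tfrac{2\sqrt\tau}{\sqrt C}\,n^{3/2}/(\log n)^{1/2}$ for the moduli alone. The crucial missing ingredient in your proposal is Lemma~\ref{lem:counting:shifts}, which shows that \emph{given} the moduli $d_1,\dots,d_n$, there are at most $(n!)^2$ minimal covering systems with those moduli. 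The proof is a greedy/reordering double count: one may relabel so that each $A_i$ covers at least a $1/i$ fraction of $\Z\setminus\bigcup_{j>i}A_j$, and since progressions of modulus $d_i$ partition $\Z$, at most $i$ of them can meet that set in density $\ge 1/i$. This replaces the factor $d_i$ (which can be as large as $N$) by a factor $i \le n$ per progression, and $(n!)^2 = e^{O(n\log n)}$ is absorbed into the $o(1)$. Without an argument of this type, there is no way to recover the claimed bound from the raw count of arithmetic progressions.

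Your instinct to control $\log N$ via the $y_{p,e}$-ordering and Lagrange multipliers is the right \emph{kind} of optimisation, but it is applied to the wrong functional: the quantity that needs to be optimised for the upper bound is $\sum_i\log(\g_i+1)$ (the log of the divisor count), not $\log N = \sum_i\g_i\log p_i$, and that is exactly what Lemma~\ref{lem:numSubGroups} handles — by a very similar Lagrange-multiplier argument, applied to $\sum_i\log(\g_i+1)$ subject to $\sum_i\g_i(p_i-1)\le M$. Finally, note that the proposition fixes $N$, so no summation over $N$ is required at this stage; that summation occurs later (in the corollary and in the main proof), where it is indeed absorbed into the $o(1)$ exactly as you describe.
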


In order to bound the number of covering systems, we will need to bound the number of choices for the modulus $d$ and shift $a$ of each arithmetic progression in $\cA$. The following simple but important lemma, which we will use again later, shows that, given the moduli, we have relatively few choices for the shifts. 

\begin{lemma}\label{lem:counting:shifts} 
Let $d_1,\ldots,d_n \in \N$. There are at most $(n!)^2$ minimal covering systems $\cA = \{A_1,\ldots,A_n\}$ of\/ $\Z$ of size $n$ such that, for each $i \in [n]$, the modulus of $A_i$ is $d_i$.
\end{lemma}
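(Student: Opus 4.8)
The plan is to set up a direct combinatorial injection that records, for each minimal covering system with the given multiset of moduli, a pair of permutations of $[n]$. The key structural fact to exploit is minimality: in a minimal cover $\cA = \{A_1,\ldots,A_n\}$, for every $i \in [n]$ there is an integer $x_i$ covered \emph{only} by $A_i$ (a ``private point'' of $A_i$), since otherwise $\cA \setminus \{A_i\}$ would still cover $\Z$. I would first fix, once and for all, an ordering of the progressions so that $A_i$ has modulus $d_i$; note there are at most $n!$ ways to match the progressions to the indices $1,\ldots,n$ carrying the prescribed moduli, which will account for one factor of $n!$. After this matching is fixed, it remains to show that the shifts $a_1,\ldots,a_n$ (with $a_i$ taken mod $d_i$) are determined by a single permutation of $[n]$, giving the second factor.

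The heart of the argument is the following observation about private points. Order the moduli so that $d_{\sigma(1)} \mid d_{\sigma(2)} \mid \cdots$ fails in general, so instead I would argue by a greedy/peeling procedure: repeatedly find an index $i$ such that $A_i$ has a private point lying in $[0, N)$ where $N = \lcm(d_1,\ldots,d_n)$ — more usefully, I would show that one can linearly order the indices as $\pi(1), \ldots, \pi(n)$ so that $A_{\pi(j)}$ has a private point with respect to the sub-system $\{A_{\pi(1)}, \ldots, A_{\pi(j)}\}$ (this holds because any minimal cover remains, after deleting $A_{\pi(n)}, A_{\pi(n-1)}, \ldots$ in a suitable order, still ``irredundant'' at each stage). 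The cleanest route: since $\cA$ is minimal, each $A_i$ has a private point $x_i$; reduce $x_i$ modulo $d_i$ to get $a_i \in \{0,\ldots,d_i-1\}$, and observe that $a_i$ is exactly the residue class of $A_i$. The point is then to bound the number of possible tuples $(x_1 \bmod N, \ldots, x_n \bmod N)$: I claim the private points can be chosen among a set of size $n$, namely we may take the $x_i$ to be distinct and to form, after sorting, the $j$th private point to lie in a predictable location — concretely, feed the $x_i$ through the Chinese Remainder structure and observe that knowing which of the $n$ ``newly uncovered'' points appears at each deletion step determines all shifts. Recording which index is deleted at each of the $n$ stages is a permutation of $[n]$, hence at most $n!$ choices; and from that permutation together with the moduli one reconstructs all $a_i$, since at stage $j$ the progression being removed is forced to contain the specified uncovered point, which pins its shift modulo its modulus.

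The main obstacle I anticipate is making precise the claim that the entire tuple of shifts is reconstructible from a single permutation — i.e., that there is genuinely no further freedom once the ``order of private points'' (equivalently, the order in which progressions become removable) is fixed. The subtlety is that a private point $x_i$ determines $a_i \bmod d_i$ only if we know \emph{which} point $x_i$ is, and a priori $x_i$ ranges over all of $\Z_N$, not a set of size $n$. The resolution is to run the deletion process: having removed $A_{\pi(n)}, \ldots, A_{\pi(j+1)}$, the remaining system $\{A_{\pi(1)},\ldots,A_{\pi(j)}\}$ still covers $\Z$ but $\{A_{\pi(1)},\ldots,A_{\pi(j-1)}\}$ does not, so the uncovered set of the latter is nonempty and \emph{entirely contained} in $A_{\pi(j)}$; crucially this uncovered set is a union of cosets determined by the already-removed progressions, and I would argue it is small enough (or structured enough) that specifying $\pi$ alone pins down $a_{\pi(j)}$. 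A slightly lossier but safe version, which suffices for the stated bound $(n!)^2$: show that $a_{\pi(j)}$ is determined modulo $\gcd(d_{\pi(j)}, \lcm(d_{\pi(1)},\ldots,d_{\pi(j-1)}))$ by the requirement of covering the first uncovered point, and modulo the complementary factor it is free — but then a counting of these free choices must also be absorbed into a second permutation. I would therefore aim for the clean statement: \emph{fixing the modulus-to-index matching ($\le n!$ choices) and the deletion order $\pi$ ($\le n!$ choices) determines $\cA$}, and spend the write-up verifying this reconstruction step carefully, as it is the only non-formal part.
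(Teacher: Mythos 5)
Your plan hinges on the claim that, once you fix the modulus-to-index matching and a ``deletion order'' permutation $\pi$, the shifts $a_{\pi(1)},\ldots,a_{\pi(n)}$ are all forced. This is the step you flag as ``the only non-formal part'', and unfortunately it is where the argument breaks. Minimality gives each $A_i$ a private point, but a private point of $A_{\pi(j)}$ can lie \emph{anywhere} in the set $U_j := \Z\setminus\bigcup_{\ell\neq j, \ell \text{ still present}} A_{\pi(\ell)}$, which is in general a union of many residue classes modulo $d_{\pi(j)}$; there is no canonical point in $U_j$ that $A_{\pi(j)}$ is obliged to contain, so knowing $\pi$ and the moduli does not pin down $a_{\pi(j)}$. (Your ``run the deletion process'' variant also runs into the problem that in a \emph{minimal} cover, removing any progression already breaks coverage, so the intermediate families $\{A_{\pi(1)},\ldots,A_{\pi(j)}\}$ do not cover $\Z$ and the ``first uncovered point'' is not a well-defined single target for $A_{\pi(j)}$.) In short, the permutation encodes which progression becomes removable at each stage, but not which of its $d_{\pi(j)}$ possible shifts it actually has.

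The missing ingredient is a \emph{density} argument, which is what the paper uses. Instead of private points, order the progressions greedily so that $A_i$ has the largest density intersection with $R_i := \Z\setminus\bigcup_{j>i}A_j$; since $\cA$ covers $\Z$, pigeonhole gives that $A_i$ covers at least a $1/i$ fraction of $R_i$. Then, given $A_{i+1},\ldots,A_n$ (hence $R_i$) and the modulus $d_i$, the $d_i$ residue classes modulo $d_i$ partition $R_i$, so at most $i$ of them can have relative density $\geq 1/i$ — giving at most $i$ choices for $A_i$, and $\prod_i i = n!$ in total, times the $n!$ for the ordering of the moduli. Your overall structure (two factors of $n!$, one from a matching/permutation and one from a sequential reconstruction) is on the right track, but without the density bound the per-step count is unbounded rather than $\leq i$.
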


\begin{proof}
Let $\cA = \{ A_1,\ldots,A_n \}$ be a minimal covering system of $\Z$, and observe that we may reorder the elements of $\cA$ so that, for each $i \in [n]$, the arithmetic progression $A_i$ covers at least a $1/i$ proportion of the set
$$R_i := \Z \setminus \bigcup_{j > i} A_j.$$
Indeed, to see that this is possible we simply choose the sets one by one (in reverse order), letting $A_i$ be the (remaining) progression in $\cA$ whose intersection with $R_i$ has largest density, observing that $R_i$ is non-empty (since $\cA$ is minimal) and recalling that $\cA$ covers $\Z$. The total number of choices for $\cA$ is therefore at most the sum over permutations of $(d_1,\ldots,d_n)$ of the number of sequences $(A_1,\ldots,A_n)$ with this additional property.  

Now let $i \in [n]$, and suppose that we have already chosen progressions $(A_{i+1},\ldots,A_n)$. We claim that we have at most $i$ choices for the arithmetic progression~$A_i$. Indeed, since the progressions $\{ a \pmod {d_i} \}$ (for $a \in \{0,\ldots,d_i-1\}$) are disjoint, there are at most $i$ progressions with modulus $d_i$ that cover at least a $1/i$ proportion of $R_i$. It follows that the number of choices for $\cA$ is at most $(n!)^2$, as claimed.
\end{proof}

It therefore only remains to bound the number of choices for the moduli. Note that if $\lcm(\cA) = p_1^{\g_1} \cdots p_m^{\g_m}$ then we have at most $\prod_i \big( \g_i + 1 \big)$ choices for each modulus. The following lemma, which we will use again later, provides a sharp bound on this product. }

\begin{lemma}\label{lem:numSubGroups} 
Let $(p_1, \ldots, p_m)$ be a sequence of distinct primes, let $(\g_1, \ldots, \gamma_m)$ be a sequence of positive integers, and let $M \ge \sum_{i = 1}^m \g_i (p_i -1)$. Then
\begin{equation}\label{equ:objfun} 
\sum_{i = 1}^m \log\big( \g_i + 1 \big) \le \big( 2\sqrt{\tau} + o(1) \big) \left( \frac{M}{\log M} \right)^{1/2}
\end{equation}
as $M \to \infty$. 
\end{lemma}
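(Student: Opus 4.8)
The plan is to optimize the left-hand side $\sum_{i\ge 1}\log(\gamma_i+1)$ subject to the constraint $\sum_{i\ge 1}\gamma_i(p_i-1)\le M$, using the prime number theorem to estimate how much "budget" it costs to give the $i$th prime an exponent. First I would reduce to the continuous relaxation: since $\gamma_i\ge 0$ are integers and $p_i-1$ grows, only primes $p$ with $p-1\le M$ can have $\gamma_i\ge 1$, and for each such prime the marginal cost of raising the exponent from $e$ to $e+1$ is exactly $p-1$, while the marginal gain in the objective is $\log\frac{e+2}{e+1}$. So it is natural to think of the problem as: for each prime $p$ and each ``level'' $e\ge 1$, we may ``buy'' the pair $(p,e)$ at cost $p-1$ for a gain of $\log\frac{e+1}{e}$, and we want to maximize total gain given total cost $\le M$. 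This is precisely the setup of the arithmetic ordering from the previous section: the greedy solution buys pairs $(p,e)$ in increasing order of the ratio $\frac{p-1}{\log((e+1)/e)} = y_{p,e}$, i.e. all pairs with $y_{p,e}<x$ for the appropriate threshold $x$.

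Next I would carry out the estimate for this greedy choice. Fixing a level $e$ and summing over primes $p$ with $p-1< x\log\frac{e+1}{e}$, the prime number theorem gives $\sum_{p-1<T}1 = (1+o(1))\frac{T}{\log T}$, so the total gain from level $e$ is $(1+o(1))\log\frac{e+1}{e}\cdot\frac{x\log\frac{e+1}{e}}{\log(x\log\frac{e+1}{e})} = (1+o(1))\frac{x}{\log x}\big(\log\frac{e+1}{e}\big)^2$, uniformly enough that summing over $e$ and using $\sum_e(\log\frac{e+1}{e})^2=\tau$ yields total objective value $(\tau+o(1))\frac{x}{\log x}$; one has to check, as in the proof of Lemma~\ref{lem:calculating:Q}, that the tail contributions over large $e$ are negligible and that the per-level sums are dominated by an absolute constant times the main term so that summing the $o(1)$'s is legitimate. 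Simultaneously, the total cost of this choice is $\sum_e\sum_{p-1<x\log\frac{e+1}{e}}(p-1) = (1+o(1))\sum_e\frac{(x\log\frac{e+1}{e})^2}{2\log(x\log\frac{e+1}{e})} = (\tau+o(1))\frac{x^2}{2\log x}$, exactly as in~\eqref{eq:n:of:x}. Equating this cost to $M$ gives $x = (1+o(1))\sqrt{2M\log M/\tau}$ (taking logs, $\log x\sim\frac12\log M$), and substituting into the objective value $(\tau+o(1))\frac{x}{\log x}$ produces $(\tau+o(1))\cdot\frac{\sqrt{2M\log M/\tau}}{\frac12\log M} = (2\sqrt\tau+o(1))\sqrt{M/\log M}$, which is the claimed bound for the greedy allocation.

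It then remains to argue that no allocation beats the greedy one by more than a $(1+o(1))$ factor, which is the main obstacle. The clean way is a direct exchange/Lagrangian argument: suppose $(\gamma_i)$ achieves total cost $\le M$; I want $\sum_i\log(\gamma_i+1)\le(\tau+o(1))\frac{x_0}{\log x_0}$ where $x_0$ is the threshold for which the greedy cost equals $M$. Write $\log(\gamma_i+1)=\sum_{e=1}^{\gamma_i}\log\frac{e+1}{e}$, so the objective is $\sum_{(p_i,e):\,e\le\gamma_i}\log\frac{e+1}{e}$, a sum over a finite set $\mathcal{S}$ of pairs, with $\sum_{(p_i,e)\in\mathcal{S}}(p_i-1)\le M$. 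For any threshold $x$, split $\mathcal{S}$ into pairs with $y_{p_i,e}<x$ and pairs with $y_{p_i,e}\ge x$: the first part contributes at most the \emph{total} greedy gain below $x$, namely $(\tau+o(1))\frac{x}{\log x}$; for the second part, $\log\frac{e+1}{e}\le x^{-1}(p_i-1)$ since $y_{p_i,e}\ge x$, so this part contributes at most $x^{-1}\sum_{(p_i,e)\in\mathcal{S}}(p_i-1)\le M/x$. Hence $\sum_i\log(\gamma_i+1)\le(\tau+o(1))\frac{x}{\log x}+\frac{M}{x}$ for \emph{every} $x$; optimizing over $x$ (choosing $x\asymp\sqrt{M\log M}$, precisely $x$ so that $\frac{x}{\log x}\cdot\tau\asymp M/x$) gives $\sum_i\log(\gamma_i+1)\le(2\sqrt\tau+o(1))\sqrt{M/\log M}$, as required. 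The only delicate point is making the ``$(\tau+o(1))\frac{x}{\log x}$'' bound on the low-$y$ part uniform in the (a priori unknown) set $\mathcal{S}$ — but since that part is bounded by the full greedy gain below $x$, which was computed above purely from the prime number theorem, this is immediate, and the whole argument goes through.
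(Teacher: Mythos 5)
Your overall strategy is a clean and genuinely different route from the paper: the paper rearranges the $\gamma_i$, passes to the ``transpose'' variables $x_t := \max\{i:\gamma_i\ge t\}$, invokes an explicit Massias--Robin bound on $\sum_{i\le x}p_i$, and then does Lagrange multipliers directly on the $x_t$. You instead reformulate the objective as a sum $\sum_{(p,e)\in\mathcal S}\log\frac{e+1}{e}$ over a ``down-set'' $\mathcal S$ of pairs with total cost $\le M$, and do a threshold/dual argument directly. That reformulation is correct and has the merit of making the structure of the extremizer transparent and of connecting visibly to the lower-bound construction of Section~5.

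However, as written there is a genuine gap in the final step, and the optimization does \emph{not} give the claimed constant. You bound
$$
\sum_i\log(\gamma_i+1)\;\le\;(\tau+o(1))\frac{x}{\log x}+\frac{M}{x}
$$
for every $x$, with the first term coming from the unconditional bound ``low part $\le$ all pairs with $y_{p,e}<x$'' and the second from the pointwise estimate on the high part. But the first term uses no budget constraint at all (the greedy set below threshold $x$ already costs $(\tau/2+o(1))x^2/\log x$, possibly far more than $M$), while the second term uses the \emph{entire} budget $M$; the two bounds are not coupled. Optimizing $x\mapsto \tau\frac{x}{\log x}+\frac{M}{x}$: balancing the two terms forces $x^2\sim M\log x/\tau$, i.e.\ $x\sim\sqrt{M\log M/(2\tau)}$, and each term then equals $(1+o(1))\sqrt{2\tau}\sqrt{M/\log M}$, giving a total of $(2\sqrt{2\tau}+o(1))\sqrt{M/\log M}$ --- a factor $\sqrt2$ too large. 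So the proposal, as stated, proves only the lemma with $2\sqrt{2\tau}$ in place of $2\sqrt\tau$.

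The fix is the standard Lagrangian-relaxation refinement of exactly the threshold step you set up. Write, for any $x>0$,
$$
\sum_{(p,e)\in\mathcal S}\log\tfrac{e+1}{e}
\;\le\;\sum_{(p,e)\in\mathcal S}\Big(\log\tfrac{e+1}{e}-\tfrac{p-1}{x}\Big)+\frac{1}{x}\sum_{(p,e)\in\mathcal S}(p-1)
\;\le\;\sum_{(p,e):\,y_{p,e}<x}\Big(\log\tfrac{e+1}{e}-\tfrac{p-1}{x}\Big)+\frac{M}{x},
$$
since the bracket is positive exactly when $y_{p,e}<x$, so enlarging to \emph{all} such pairs only increases the sum. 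Now the low-part term is (greedy gain below $x$) $-$ (greedy cost below $x$)$/x$, and by the PNT computations you already carried out this equals $(\tau+o(1))\frac{x}{\log x}-(\tau/2+o(1))\frac{x}{\log x}=(\tau/2+o(1))\frac{x}{\log x}$. The optimization of $(\tau/2)\frac{x}{\log x}+\frac{M}{x}$ then lands on the correct threshold $x\sim\sqrt{M\log M/\tau}$ and yields $(2\sqrt\tau+o(1))\sqrt{M/\log M}$, as required. With this single change your proof is correct and is arguably more elementary than the paper's (no rearrangement and no explicit appeal to Massias--Robin, only the prime number theorem).
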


{\setstretch{1.07}
\begin{proof} 
We may assume that $p_1 < \cdots < p_m$, and reorder the $\gamma_i$ so that $\g_1 \geq \cdots \geq \g_m$, noting that this does not change the left-hand side of~\eqref{equ:objfun}, and that the inequality $M \ge \sum_{i = 1}^m \g_i (p_i -1)$ still holds under the new ordering. Set $x_t := \max\big\{ i : \g_i \ge t \big\}$, and observe that maximizing the left-hand side of~\eqref{equ:objfun} is equivalent to maximizing
$$X :=  \sum_{t \ge 1} x_t \cdot \log\left( \frac{t+1}{t} \right)$$ 
subject to the constraint
\begin{equation}\label{equ:constraint}
M \ge \sum_{i \ge 1} \g_i(p_i-1) = \sum_{t\geq 1} \sum_{i=1}^{x_t} (p_i-1) \ge \sum_{t \ge 1} \frac{x_t^2}{2} \max\big\{ \log x_t - 3, \, 1 \big\},
\end{equation}
where in the final step we used the following bound of Massias and Robin~\cite{MR},
$$\sum_{i=1}^x p_i \geq \frac{x^2}{2}\left( \log x + \log\log x - \frac{3}{2} - \frac{3.568}{\log x} \right) \geq \frac{x^2}{2} \big( \log x - 2 \big),$$
which holds for every $x \ge e^3$. Note that $X$ is increasing in $x_t$ for each $t \ge 1$, and so (by allowing $0 \le x_t \in \R$) we may assume that $M$ is equal to the right-hand side of~\eqref{equ:constraint}. 

\enlargethispage*{\baselineskip}

Applying the method of Lagrange multipliers, it follows that there exists $\lambda \in \R$ such that, for each $t \ge 1$, either
\begin{equation} \label{equ:lagrange} 
\l \log \frac{t+1}{t} = x_t \bigg( \log x_t - \frac{5}{2} \bigg),
\end{equation} 
or $x_t \le e^4$. We will first show that the contribution to $X$ of those values of $t$ such that $x_t = O(1)$ is small. To do so, note that $x_t = 0$ for all $t > M$, by~\eqref{equ:constraint}, and observe that therefore
\begin{equation}\label{equ:small:xts} 
\sum_{t \ge 1} x_t \cdot \log\left( \frac{t+1}{t} \right) \mathbbm{1}\big[ x_t \le \log M \big] \, \le \, \big( \log (M+1) \big)^2.
\end{equation} 
We may therefore restrict our attention to those values of $t$ for which $x_t > \log M > e^4$, so that, in particular,~\eqref{equ:lagrange} holds. Let $T = \max\big\{ t : x_t > \log M \big\}$ and observe that, by~\eqref{equ:constraint},~\eqref{equ:lagrange} and~\eqref{equ:small:xts}, we have
$$X \le \lambda \sum_{t = 1}^T \frac{\big( \log \frac{t+1}{t} \big)^2}{\log x_t - 5/2} + O\big( \log M \big)^2 \quad\qquad \text{and} \quad\qquad M \ge \frac{\lambda^2}{2} \sum_{t = 1}^T \frac{\left(\log\frac{t+1}{t} \right)^2}{\log x_t}.$$
To bound these sums, observe that $\lambda \to \infty$ as $M \to \infty$ 
(by~\eqref{equ:lagrange} and since $x_1 > \log M$), and that therefore, uniformly in $1 \le t \le \log \lambda$, we have
$$\log x_t = \log \l + \log\log \frac{t+1}{t} - \log\big( \log x_t - 5/2 \big) = \big( 1 + o(1) \big)\log \l$$
as $M \to \infty$. Moreover, if $\log \lambda \le t \le T$, then $\log\frac{t+1}{t} \le 1/t$ and $\log x_t \ge \log \log M$. 
It follows that, for each fixed $c \in \R$, we have  
$$\sum_{t = 1}^T \frac{\left(\log\frac{t+1}{t} \right)^2}{\log x_t - c} = \frac{1+o(1)}{\log \l} \sum_{t\geq 1} \left( \log \frac{t+1}{t} \right)^2 = \frac{\tau + o(1)}{\log \l},$$
as $M \to \infty$, and hence $M \ge \big( \tau/2 + o(1) \big) \lambda^2 / \log \l$. Finally, we deduce that
$$X \le \big( \tau + o(1) \big) \frac{\lambda}{\log \l} \le \big( 2\sqrt{\tau} + o(1) \big) \left( \frac{M}{\log M} \right)^{1/2}$$
as $M \to \infty$, as required.
\end{proof}
}
{\setstretch{1.2}

We can now easily deduce Proposition~\ref{lem:LargemCrudeBound}.

\begin{proof}[Proof of Proposition~\ref{lem:LargemCrudeBound}]
We first choose the moduli of the progressions in $\cA = \{A_1,\ldots,A_n\}$, and then the shifts. Since $\lcm(\cA) = N = p_1^{\g_1} \cdots p_m^{\g_m}$, for each $j \in [n]$ we have at most 
$$\prod_{i = 1}^m \big( \g_i + 1 \big) \le \exp\bigg( \big( 2\sqrt{\tau} + o(1) \big) \left( \frac{n}{C\log(n/C)} \right)^{1/2} \bigg)$$
choices for the modulus of the arithmetic progression $A_j$, where the inequality follows by applying Lemma~\ref{lem:numSubGroups} with $M = n / C$, and using our bound on $n$. By Lemma~\ref{lem:counting:shifts}, it follows that the number of choices for $\cA$ is at most
$$(n!)^2 \cdot \exp\bigg( \bigg( \frac{2\sqrt{\tau}}{\sqrt{C}} + o(1) \bigg) \frac{n^{3/2}}{\big( \log(n/C) \big)^{1/2}} \bigg) = \exp\left( \left( \frac{2\sqrt{\tau}}{\sqrt{C}} + o(1)\right)\frac{n^{3/2}}{(\log n)^{1/2}} \right)$$
as $n \to \infty$, as required.
\end{proof}

Using Simpson's theorem (Theorem~\ref{Simpson:thm}), we can now easily deduce an upper bound on the number of minimal covering systems that is sharp up to a constant factor in the exponent. 

\begin{corollary}
The number of minimal covering systems of\/ $\Z$ of size $n$ is 
$$\exp\bigg( \frac{\Theta\big( n^{3/2} \big)}{(\log n)^{1/2}  } \bigg).$$
\end{corollary}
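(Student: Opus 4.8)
The statement packages together a lower bound $\exp\big(\Omega(n^{3/2}(\log n)^{-1/2})\big)$ and a matching upper bound $\exp\big(O(n^{3/2}(\log n)^{-1/2})\big)$. The lower bound is precisely Proposition~\ref{lem:NumbOfFrames}, so nothing further is needed there; the plan is to derive the upper bound by combining Simpson's theorem with Proposition~\ref{lem:LargemCrudeBound}.

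First I would fix a minimal covering system $\cA$ of $\Z$ of size $n$ and write $\lcm(\cA) = p_1^{\g_1}\cdots p_m^{\g_m}$. Simpson's theorem (Theorem~\ref{Simpson:thm}) gives $n \ge \sum_{i=1}^m \g_i(p_i-1) + 1$, so in particular $\sum_{i=1}^m \g_i(p_i-1) < n$, which is exactly the hypothesis of Proposition~\ref{lem:LargemCrudeBound} with $C = 1$. Hence, for each fixed value of $N = \lcm(\cA)$, the number of minimal covering systems of size $n$ with that least common multiple is at most $\exp\big((2\sqrt{\tau}+o(1))\,n^{3/2}(\log n)^{-1/2}\big)$, and (since the $o(1)$ in that proposition comes from the worst-case bound of Lemma~\ref{lem:numSubGroups}) this estimate is uniform over all admissible $N$.

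It then remains to bound the number of possible values of $N$. From $\sum_i \g_i(p_i-1) < n$ we see that every prime dividing $N$ is at most $n$ and that $\sum_i \g_i < n$, so $N$ is determined by a multiset of size less than $n$ drawn from the $\pi(n) \le n$ primes up to $n$; the number of such choices is at most $\binom{2n}{n} \le 4^n = \exp(O(n))$. Summing the estimate of the previous paragraph over these $N$ multiplies it by $\exp(O(n))$, and since $n = o\big(n^{3/2}(\log n)^{-1/2}\big)$ this is absorbed into the exponent. Together with Proposition~\ref{lem:NumbOfFrames} this gives $\exp\big(\Theta(n^{3/2}(\log n)^{-1/2})\big)$, as claimed. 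There is no genuine obstacle here: the only thing to check is that neither the union bound over $N$ nor the factor $(n!)^2$ from Lemma~\ref{lem:counting:shifts} inside Proposition~\ref{lem:LargemCrudeBound} is large enough to disturb the order of the exponent, and both are of size $\exp(O(n\log n)) = \exp\big(o(n^{3/2}(\log n)^{-1/2})\big)$.
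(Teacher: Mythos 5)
Your proposal is correct and follows essentially the same route as the paper: lower bound from Proposition~\ref{lem:NumbOfFrames}, upper bound by applying Simpson's theorem to verify the hypothesis of Proposition~\ref{lem:LargemCrudeBound} with $C=1$, and a union bound over admissible $N$. The only cosmetic difference is in how you count the admissible $N$: you bound the number of multisets of prime factors, whereas the paper notes directly from Simpson's inequality that $\sum_i \g_i(p_i-1)+1 \ge \sum_i \g_i\log_2 p_i$ forces $N \le 2^n$, giving a slightly cleaner $\exp(O(n))$ factor; both are absorbed into the $o(1)$ of the exponent.
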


\begin{proof}
The lower bound follows by Proposition~\ref{lem:NumbOfFrames} (or by the simpler construction in the introduction). For the upper bound, recall that, by Simpson's theorem, we have
$$|\cA| \ge \sum_{i=1}^m \gamma_i \big( p_i - 1 \big) + 1 \ge \sum_{i=1}^m \gamma_i \log_2 p_i$$
for any minimal covering system $\cA$ of $\Z$ with $\lcm(\cA) = N = p_1^{\g_1} \cdots p_m^{\g_m}$, and hence $N \le 2^n$. Thus, applying Proposition~\ref{lem:LargemCrudeBound} with $C = 1$ (and summing over $N \le 2^n$), there are at most 
$$\exp\left( \big( 2\sqrt{\tau} + o(1) \big) \frac{n^{3/2}}{(\log n)^{1/2}} \right)$$
minimal covering systems of $\Z$ of size $n$, as required.
\end{proof}
}

\section{Proof of Theorem~\ref{thm:MainCountingThm}}\label{MainCountingProofSec}

{\setstretch{1.12}

In this section we will complete the proof of Theorem~\ref{thm:MainCountingThm}; we begin by giving an overview of the remaining part of the argument. Let $\cA$ be a minimal covering system of $\Z$ of size $n$, let $N = \lcm(\cA)$ and, recalling~\eqref{def:bracketsN}, set $S_{(p,e)} = \{0,\ldots,p-1\}$ for each $(p,e) \in \< N \>$. We map $\Z_N$ into $S_{\< N \>} = \prod_{(p,e) \in \< N \>} S_{(p,e)}$ as described in Section~\ref{arithmetic:sec}; that is, 
we associate $x \in \Z_N$ with the vector $y = \varphi_N(x) \in S_{\< N \>}$, where $y_{(p,e)}$ is the coefficient of $p^{e-1}$ in the $p$-ary expansion of $x$ modulo $p^e$. Note that the image of each progression in $A \in \cA$ is a hyperplane in $S_{\< N \>}$.  Moreover, by Observation~\ref{obs:arithmetic:hyperplanes}, if $H = \varphi_N(A)$ then, for each prime $p$, the set 
$$\big\{ (p,e) \in \< N \> : (p,e) \in F(H) \big\}$$ 
forms a (possibly empty) initial segment of the sequence $(p,1), (p,2), \ldots \,$. Recall that we call hyperplanes that satisfy this condition `arithmetic'. 

We will apply Theorem~\ref{thm:mainStructureThm} to $\cA$ (with $C = 4$ and $\eps > 0$ an arbitrarily small constant), and deduce that either~\eqref{eq:structure:assumption} fails to hold, \emph{or} $\cA$ contains an almost optimal $\delta$-generalized frame $\big( \FF_{(p,e)} : (p,e) \in \< N \> \big)$. In the former case we are done by Proposition~\ref{lem:LargemCrudeBound}, so let us assume the latter. We will carefully count the number of choices for the fixed sets of the frame elements $\FF_{(p,e)}$ such that $p > \delta^{-1}$. The bound we obtain will be sufficiently strong unless $N$ is primarily composed of primes smaller than $\delta^{-1}$; however, for such $N$ it turns out that the simpler argument used in Section~\ref{counting:weird:frames:sec} suffices to give a sufficiently strong bound. 

Next, we bound the number of choices for the fixed sets of the remaining hyperplanes: those in frame sets $\FF_{(p,e)}$ for some prime $p \le \delta^{-1}$, and those not used in the frame. Surprisingly, it turns out that we can again obtain a sufficiently strong bound using the method of Section~\ref{counting:weird:frames:sec}. Roughly speaking, these `extra' hyperplanes are being used inefficiently, and would be better off (in terms of increasing the number of choices) by contributing to the construction of a larger frame (and thus a different value of $N$). 

Finally, noting that the fixed sets of the hyperplanes in $\cA$ correspond to the moduli of the original arithmetic progressions, we will use Lemma~\ref{lem:counting:shifts} to bound the number of minimal covering systems of $\Z$ of size $n$ with given moduli. 

\subsection{Choosing the fixed sets of $\delta$-generalized frames}\label{counting:frames:sec}

Let $N \in \N$ and $\delta > 0$, and suppose that $\big( \FF_{(p,e)} : (p,e) \in \< N \> \big)$ is a $\delta$-generalized frame in $S_{\< N \>}$ consisting of arithmetic hyperplanes.\footnote{By Definition~\ref{def:generalizedframe}, we may suppose that $\delta$ is sufficiently small; in particular, we will assume that $\delta < 1/2$.} Recall that $\FF_{(p,e)}$ is a collection of at most $p - 1$ hyperplanes, and that there exists an ordering $\prec$ on $\< N \>$, and for each $(p,e) \in \< N \>$ a set 
\begin{equation}\label{eq:Ipe:contains}
I(p,e) \supseteq \big\{ (q,f) \in \< N\> : (p,e) \prec (q,f) \text{ and } (q,f) \ne (p,e) \big\},
\end{equation}
such that $\mu_{I(p,e)}(H) > \delta$ for each $(p,e) \in \<N\>$ and $H \in \FF_{(p,e)}$. Recall also that $(p,e) \in F(H)$, and that the sets $\FF_{(p,e)}$ with $p > \delta^{-1}$ are disjoint. We remark that the ordering $\prec$ might not be arithmetic, but the hyperplanes \emph{are} arithmetic, and this will turn out to be sufficient.

}
{\setstretch{1.18}

In this subsection we will bound the number of choices for the fixed sets of the hyperplanes in $\big( \FF_{(p,e)} : (p,e) \in \< N \> \big)$ corresponding to primes larger than $\delta^{-1}$. While doing so, it will be convenient to write $\< N \>_\delta := \big\{ (p,e) \in \< N \> : p > \delta^{-1} \big\}$, and to define
$$\G(N) := \sum_{(p,e) \in \< N \>} (p-1) \qquad \text{and} \qquad \G_{\delta}(N) := \sum_{(p,e) \in \la N \ra_{\delta}} (p-1).$$
Note that, by Simpson's theorem, if $\lcm(\cA) = N$ then $|\cA| \ge \G(N)$. Given an ordering $\prec$ on $\< N \>$, for each $(p,e) \in \< N \>$ set
$$M_\prec(p,e) := \prod_{(q,f) \prec (p,e)} q,$$
and given a collection $\cA$ of hyperplanes, let us write $\cD(\cA) := \big( F(H) : H \in \cA \big)$ for the corresponding collection of fixed sets. We begin by observing the following upper bound (cf.~Lemma~\ref{lem:general:lower:bound}) on the number of choices for the sequence $\big( \cD(\FF_{(p,e)}) : (p,e) \in \< N \>_\delta \big)$.

\begin{lemma}\label{lem:general:upper:bound}
Let $N \in \N$ and $\delta > 0$, and let $\prec$ be an ordering on $\< N \>$. There are at most 
$$\exp\Bigg( \sum_{(p,e) \in \< N \>_\delta } (p-1) \Bigg( \sum_{(q,f) \in \< M_\prec(p,e) \>} \log\left( \frac{f+1}{f} \right) + \frac{1}{\delta^2} \Bigg) \Bigg)$$ 
sequences $\big( \cD(\FF_{(p,e)}) : (p,e) \in \< N \>_\delta \big)$ such that $\big( \FF_{(p,e)} : (p,e) \in \< N \> \big)$ is a simple $\delta$-generalized frame in $S_{\< N \>}$ with ordering~$\prec$ and consisting only of arithmetic hyperplanes.
\end{lemma}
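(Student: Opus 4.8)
The plan is to bound the number of possibilities for each $\cD(\FF_{(p,e)})$, for $(p,e) \in \< N \>_\delta$, separately, and then multiply these bounds. Since $|\FF_{(p,e)}| \le |S_{(p,e)}| - 1 = p-1$, the collection $\cD(\FF_{(p,e)})$ consists of at most $p-1$ sets, each belonging to the family $\mathcal{E}(p,e)$ of those sets $F$ that arise as $F(H)$ for some $H \in \FF_{(p,e)}$ in \emph{some} $\delta$-generalized frame in $S_{\< N \>}$ with ordering~$\prec$ consisting of arithmetic hyperplanes; hence the number of choices for $\cD(\FF_{(p,e)})$ is at most $\big( |\mathcal{E}(p,e)| + 1 \big)^{p-1}$. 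It therefore suffices to prove that
$$|\mathcal{E}(p,e)| + 1 \, \le \, \exp\big( 1/\delta^2 \big) \prod_{(q,f) \prec (p,e)} \frac{f+1}{f}$$
for every $(p,e) \in \< N \>_\delta$, since multiplying over $(p,e)$ and collecting exponents yields exactly the claimed bound.

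To describe $\mathcal{E}(p,e)$, recall that if $H \in \FF_{(p,e)}$ then $H$ is arithmetic, $(p,e) \in F(H)$, and $\mu_{I(p,e)}(H) > \delta$ for some $I(p,e) \supseteq \{ (q,f) : (p,e) \prec (q,f) \}$ with $(p,e) \notin I(p,e)$. Replacing $I(p,e)$ by its minimal permitted value $\{ (q,f) : (p,e) \prec (q,f) \}$ only relaxes the condition $\mu_{I(p,e)}(H) > \delta$, and conversely any arithmetic $F \ni (p,e)$ satisfying the resulting inequality is realised by the frame with $\FF_{(p,e)} = \{H\}$ and all other families empty; hence $\mathcal{E}(p,e)$ is precisely the set of arithmetic $F$ with $(p,e) \in F$ and $\prod_{(q,f) \in F \,:\, (p,e) \prec (q,f)} q < \delta^{-1}$ (using $|S_{(q,f)}| = q$). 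An arithmetic set $F$ is determined by the heights $(\ell_q)_{q \mid N}$, $\ell_q \in \{0,1,\dots,\g_q\}$, where $F = \{ (q,f) : f \le \ell_q \}$; the condition $(p,e) \in F$ reads $\ell_p \ge e$, and since every factor of the product above is at least~$1$, admissibility forces $q^{a_q(\ell_q)} < \delta^{-1}$ for each $q$, where $a_q(\ell) := \bigl| \{ f \le \ell : (p,e) \prec (q,f) \} \bigr|$. Thus $|\mathcal{E}(p,e)|$ is at most the product over $q \mid N$ of the number of admissible heights in column~$q$.

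The crux is to bound the number of admissible heights in column~$q$ by $c_q \prod_{f \,:\, (q,f) \prec (p,e)} \frac{f+1}{f}$ with $\prod_q c_q$ small, since $\prod_q \prod_{f \,:\, (q,f) \prec (p,e)} \frac{f+1}{f} = \prod_{(q,f) \prec (p,e)} \frac{f+1}{f}$. Write $B_q = \{ f : (q,f) \prec (p,e) \}$ and $D_q = \{ f : (p,e) \prec (q,f) \}$. If $q \ge \delta^{-1}$ and $q \ne p$, then no $(q,f)$ with $f \in D_q$ may lie in $F$, so the admissible heights are $0,1,\dots,\min D_q - 1$ (with $\min D_q := \g_q + 1$ if $D_q = \emptyset$); since $\{1,\dots,\min D_q - 1\} \subseteq B_q$ we may take $c_q = 1$. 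The same argument applies for $q = p$ (recall $p > \delta^{-1}$), except that one further discards the heights below~$e$ and loses at most the single index~$e$ from $B_q$, giving $c_p = 2$. Finally, if $q < \delta^{-1}$, a height $\ell$ is admissible only while $|D_q \cap [1,\ell]| < \log_q(\delta^{-1})$; letting $d$ be the least height violating this (or $d := \g_q + 1$ if none does), the admissible heights are $0,\dots,d-1$, and since $[1,d-1]$ contains fewer than $\log_q(\delta^{-1})$ elements of $D_q$ and hence more than $d - 1 - \log_q(\delta^{-1})$ elements of $B_q$, all of size at most $d-1$, a telescoping estimate gives $d \le \big( \log_q(\delta^{-1}) + 1 \big) \prod_{f \in B_q} \frac{f+1}{f}$, i.e.\ $c_q = \log_q(\delta^{-1}) + 1$. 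Multiplying, and using $\log_q(\delta^{-1}) \le \log_2(\delta^{-1})$ together with the fact that the number of primes below $\delta^{-1}$ is less than $\delta^{-1}$, yields
$$|\mathcal{E}(p,e)| \, \le \, 2 \big( \log_2(\delta^{-1}) + 1 \big)^{\delta^{-1}} \prod_{(q,f) \prec (p,e)} \frac{f+1}{f},$$
and the elementary inequality $2 \big( \log_2(\delta^{-1}) + 1 \big)^{\delta^{-1}} + 1 \le \exp(1/\delta^2)$, valid for all $\delta > 0$, completes the reduction.

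The main obstacle is the telescoping estimate when $q < \delta^{-1}$: since $\prec$ need not be arithmetic, the indices of column~$q$ lying $\prec$-below $(p,e)$ may be interleaved arbitrarily with those lying $\prec$-above it, so the count of admissible heights cannot simply be read off the column; the point is that, however they interleave, enough small indices of $B_q$ are trapped below the first few indices of $D_q$ to make $\prod_{f \in B_q} \frac{f+1}{f}$ large enough to absorb the count. A minor nuisance is the range of $\delta$ close to (or above)~$1$, where $\< N \>_\delta$ contains every pair, $\delta^{-1}$ lies below the smallest prime, and one instead verifies the bound directly from the fact that $|\mathcal{E}(p,e)|$ is a nonnegative integer with $|\mathcal{E}(p,e)| \le 2 \prod_{(q,f) \prec (p,e)} \frac{f+1}{f} - 1$, together with $\exp(1/\delta^2) \ge 2$.
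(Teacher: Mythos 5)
Your proof is correct, and it follows the same overall scheme as the paper: fix $(p,e) \in \langle N \rangle_\delta$, bound the number of admissible fixed sets column-by-column, raise to the power $p-1$, and multiply over $(p,e)$. The substantive difference is in the per-column estimate, and here your version is actually the more careful one. Writing $B_q = \{ f : (q,f) \prec (p,e) \}$, the paper asserts the identity $|B_q| + 1 = \exp\big(\sum_{f \in B_q} \log\frac{f+1}{f}\big) = \prod_{f \in B_q}\frac{f+1}{f}$; but this equality holds only when $B_q$ is an initial segment $\{1,\dots,|B_q|\}$, which need not be the case since $\prec$ is not assumed arithmetic (a point the paper itself flags). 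Your trapping argument repairs this: the admissible heights $\ell$ are those for which $\{1,\dots,\ell\}$ (minus at most $\log_q(\delta^{-1})$ indices from $D_q$, plus possibly $e$) is forced into $B_q$, so $\prod_{f\in B_q}\frac{f+1}{f}$ already absorbs the count regardless of how $B_q$ interleaves with $D_q$. This gives exactly the bound $d \le (\log_q(\delta^{-1})+1)\prod_{f \in B_q}\frac{f+1}{f}$ (and $d \le \prod_{f \in B_q}\frac{f+1}{f}$ for $q>\delta^{-1}$, $q\ne p$), which is what the lemma actually needs, and it holds without assuming any structure on $\prec$. Your further reduction to counting multisets of size at most $p-1$ via $(|\mathcal{E}(p,e)|+1)^{p-1}$, and the bookkeeping that turns $2(\log_2(\delta^{-1})+1)^{\delta^{-1}}+1$ into $e^{1/\delta^2}$, are both sound; your remark about the edge case $\delta$ near $1$ (where $\mu_I(H)>\delta$ already forces $\mathcal{E}(p,e)$ to be trivial) is the right way to dispose of it. In short: same approach, but with the key column-wise inequality justified correctly where the paper's own chain of inequalities has a small lapse.
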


\begin{proof}
Let $(p,e) \in \la N \ra_{\delta}$, let $H \in \FF_{(p,e)}$, and let $q$ be a prime. Recall that, since $H$ is an arithmetic hyperplane, it follows by Observation~\ref{obs:arithmetic:hyperplanes} that the set $F(H)$ induces a (possibly empty) initial segment of the set $(q,1), (q,2), (q,3), \ldots$

Suppose first that $q > \delta^{-1}$. We claim in this case that there are at most 
$$\big| \big\{ f : (q,f)  \in \< M_\prec(p,e) \> \big\} \big| + 1 = \exp\Bigg( \sum_{f \, :\, (q,f)  \in \< M_\prec(p,e) \>} \log \left( \frac{f+1}{f} \right) \Bigg)$$
choices for this initial segment. To see this, recall that $\mu_{I(p,e)}(H) > \delta$, and therefore $(q,f) \not\in F(H)$ for every $(q,f) \in I(p,e)$. By~\eqref{eq:Ipe:contains}, it follows that $F(H)$ does not contain any element $(q,f)$ with $(p,e) \prec (q,f)$ and $(p,e) \ne (q,f)$, and therefore the elements $(q,f)$ in $F(H)$ form an initial segment (in increasing order of~$f$) of the set $\big\{ (q,f) \in \< N \> :  (q,f) \prec (p,e) \big\}$. Since this set has the same size as the set $\big\{ f : (q,f)  \in \< M_\prec(p,e) \> \big\}$ (which is an initial segment of the positive integers), the claimed bound on the number of choices follows.

Now suppose instead that $q \le \delta^{-1}$. In this case the condition $\mu_{I(p,e)}(H) > \delta$ only implies that $F(H)$ contains at most $\log_2(\delta^{-1})$ elements of $I(p,e)$, and hence, by~\eqref{eq:Ipe:contains}, at most $\log_2(\delta^{-1})$ elements $(q,f)$ such that $(q,f) \not\prec (p,e)$. Repeating the argument from the case $q > \delta^{-1}$, it follows that we have at most 
$$\big| \big\{ f : (q,f)  \in \< M_\prec(p,e) \> \big\} \big| + 1 + \log_2(\delta^{-1})  \le \exp\Bigg( \sum_{f \, :\,(q,f) \in \< M_\prec(p,e) \>} \log \left( \frac{f+1}{f} \right)  + \frac{1}{\delta}  \Bigg)$$
choices for the initial segment of the set $(q,1), (q,2), (q,3), \ldots$ 

Finally, recall that there are at most $p - 1$ hyperplanes in $\FF_{(p,e)}$ for each $(p,e) \in \la N \ra_{\delta}$, and note that there are at most $\delta^{-1}$ primes $q \le \delta^{-1}$. Hence, multiplying the number of choices for all $(p,e) \in \la N \ra_{\delta}$, all $H \in \FF_{(p,e)}$, and all primes $q$ that divide~$N$, it follows that we have at most 
$$\exp\Bigg( \sum_{(p,e) \in \< N \>_\delta } (p-1) \Bigg( \sum_{(q,f) \in \< M_\prec(p,e) \>} \log\left( \frac{f+1}{f} \right) + \frac{1}{\delta^2} \Bigg) \Bigg)$$ 
choices for the sequence $\big( \cD(\FF_{(p,e)}) : (p,e) \in \< N \>_\delta \big)$, as claimed.
\end{proof}
}
{\setstretch{1.15}

For each $N \in \N$ and $\delta > 0$, and each ordering $\prec$ on $\< N \>$, let us define 
$$Q_\delta(N, \prec) := \sum_{(p,e) \in \< N \>_\delta } (p-1) \sum_{(q,f) \in \< M_\prec(p,e) \>} \log\left( \frac{f+1}{f} \right).$$
The following lemma provides a sufficiently strong upper bound on $Q_\delta(N, \prec)$. 

\begin{lemma} \label{lem:FrameCount}
Let $N \in \N$ and $\delta > 0$, and let $\prec$ be an ordering on $\< N \>$. If\/ $\G_{\delta}(N) > \delta \cdot \G(N)$, then 
\begin{equation}\label{eq:FrameCount}
Q_{\delta}(N,\prec) \leq \left( \frac{4\sqrt{\tau}}{3} + o(1)\right) \frac{\G_\delta(N)^{3/2}}{\big( \log \G_\delta(N) \big)^{1/2}}
\end{equation} 
as $N \to \infty$.
\end{lemma}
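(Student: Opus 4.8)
The plan is to bound $Q_\delta(N,\prec)$ by first replacing $\prec$ with a canonical ``worst'' ordering, then discarding the contribution of the primes $q\le\delta^{-1}$ and of the ``within a prime'' pairs, and finally comparing the surviving sum with the value attained by a threshold set $N(x)$, for which Lemma~\ref{lem:calculating:Q} already supplies the answer.

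\textbf{Step 1: reduction to a canonical ordering.}
For a fixed integer $N$, $Q_\delta(N,\prec)$ is a sum of the form $\sum_{(q,f)\prec(p,e)}c_{(p,e)}\,w_{(q,f)}$, where $c_{(p,e)}=(p-1)\mathbbm{1}[p>\delta^{-1}]$ and $w_{(q,f)}=\log\frac{f+1}{f}$, so swapping two consecutive elements of $\prec$ changes it by $\pm\big(c_{(p,e)}w_{(q,f)}-c_{(q,f)}w_{(p,e)}\big)$. Hence the maximum over all total orderings of $\langle N\rangle$ is attained by the ordering $\prec^{*}$ that lists the pairs in non-decreasing order of the ratio $c_{(p,e)}/w_{(p,e)}$; this places all pairs with $q\le\delta^{-1}$ first (in any order), followed by the pairs with $p>\delta^{-1}$ in increasing order of $y_{p,e}$, which is arithmetic inside each prime. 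It therefore suffices to bound $Q_\delta(N,\prec^{*})$.

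\textbf{Step 2: the small primes and the diagonal are negligible.}
With $\prec^{*}$ as above, every pair $(p,e)$ with $p>\delta^{-1}$ is $\prec^{*}$-preceded by every pair $(q,f)$ with $q\le\delta^{-1}$, whose total $w$-weight is $\sum_{q\le\delta^{-1}}\log(\gamma_q+1)$, so
\[
Q_\delta(N,\prec^{*})\;=\;\Gamma_\delta(N)\sum_{q\le\delta^{-1}}\log(\gamma_q+1)\;+\;Q^{\mathrm{lg}},
\]
where $Q^{\mathrm{lg}}=\sum_{(p,e):\,p>\delta^{-1}}(p-1)\sum_{(q,f):\,q>\delta^{-1},\,y_{q,f}<y_{p,e}}\log\frac{f+1}{f}$ (including the case $q=p$). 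Since $N\to\infty$ forces $\Gamma(N)\ge\log_2 N\to\infty$, and the hypothesis $\Gamma_\delta(N)>\delta\,\Gamma(N)$ gives $\gamma_q\le\Gamma(N)<\Gamma_\delta(N)/\delta$ for every $q$ while there are fewer than $\delta^{-1}$ primes $q\le\delta^{-1}$, the first term is $O_\delta\big(\Gamma_\delta(N)\log\Gamma_\delta(N)\big)=o\big(\Gamma_\delta(N)^{3/2}(\log\Gamma_\delta(N))^{-1/2}\big)$. The pairs of $Q^{\mathrm{lg}}$ with $q=p$ contribute $\sum_{p>\delta^{-1}}(p-1)\log(\gamma_p!)\le\Gamma_\delta(N)\log\Gamma_\delta(N)$, which is negligible for the same reason; so it remains to bound the part of $Q^{\mathrm{lg}}$ coming from pairs with $q\ne p$.

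\textbf{Step 3: the optimisation.}
This surviving sum depends only on the multiset $\langle N\rangle_\delta$, so, writing $m:=\Gamma_\delta(N)$ and $x_f:=|\{\,p>\delta^{-1}:\gamma_p\ge f\,\}|$, it suffices to prove that for every multiset $M$ of pairs (prime $>\delta^{-1}$, positive integer) with $\sum_{(p,e)\in M}(p-1)\le m$,
\[
\sum_{\substack{(q,f),(p,e)\in M\\ q\ne p,\ y_{q,f}<y_{p,e}}}(p-1)\log\frac{f+1}{f}\;\le\;\Big(\frac{4\sqrt{\tau}}{3}+o(1)\Big)\frac{m^{3/2}}{(\log m)^{1/2}}\qquad(m\to\infty).
\]
For fixed $(p,e)$ the number of primes $q\ne p$ with $\gamma_q\ge f$ and $q-1<y_{p,e}\log\frac{f+1}{f}$ is at most $\min\big(x_f,\ \pi(y_{p,e}\log\frac{f+1}{f}+1)\big)$; combining this with the prime number theorem (the tail $f\ge f_0$ being uniformly negligible, as in Lemma~\ref{lem:calculating:Q}) and with the Massias--Robin bound used in the proof of Lemma~\ref{lem:numSubGroups}, which forces a constraint of the shape $\sum_f x_f^2\max\{\log x_f,1\}=O(m)$, the left-hand side becomes a constrained maximisation in the profile $(x_f)_{f\ge1}$. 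Solving it by Lagrange multipliers, exactly as in Lemma~\ref{lem:numSubGroups} but with the (now non-separable) objective above, the maximising profile is the threshold profile $x_f=|\{\,p>\delta^{-1}:p-1<x^{*}\log\frac{f+1}{f}\,\}|$ for the value $x^{*}$ with $\Gamma_\delta(N(x^{*}))=(1+o(1))m$; for this $M$ the quantity in question is at most $Q(N(x^{*}),<)$, and $Q(N(x^{*}),<)=\big(\frac{4\sqrt{\tau}}{3}+o(1)\big)\Gamma(N(x^{*}))^{3/2}(\log\Gamma(N(x^{*})))^{-1/2}$ by Lemma~\ref{lem:calculating:Q}. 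Since only $O_\delta(1)$ primes are at most $\delta^{-1}$, each contributing $O_\delta(x^{*})=o(m)$ to $\Gamma(N(x^{*}))$, we have $\Gamma(N(x^{*}))=(1+o(1))\Gamma_\delta(N(x^{*}))=(1+o(1))m$, and combining this with Steps 1 and 2 yields~\eqref{eq:FrameCount}.

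\textbf{The main obstacle.}
The substance of the argument is Step 3: showing that the threshold set is, to leading order, the maximiser of $Q^{\mathrm{lg}}$ subject to $\Gamma_\delta=m$. Unlike Lemma~\ref{lem:numSubGroups}, the objective is a double sum over pairs and is \emph{not} separable in the $x_f$; crucially one cannot replace the count $\min\big(x_f,\pi(\cdots)\big)$ by either argument without losing the constant (dropping the $x_f$ admits a degenerate single-prime configuration of much larger value, and dropping the $\pi(\cdots)$ is also too lossy), so the two regimes must be balanced carefully. The remaining points --- verifying that the Lagrange conditions, in which the ``boundary value'' $y^{*}$ of each exponent level is pinned to a common $x^{*}$, single out the threshold profile; handling the relaxation to real-valued $x_f$; and checking that all $o(1)$ error terms are uniform in $\delta$ and in $f_0$ --- are routine, as are Steps 1 and 2.
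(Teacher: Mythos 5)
Your Steps 1 and 2 are correct (the bubble-sort argument does show that the worst ordering places the small primes first and then sorts the large pairs by $y_{p,e}$, and the estimates discarding the small-prime and diagonal contributions both land in $O(\G_\delta(N)\log\G_\delta(N))$, which is indeed negligible). But neither of these reductions is needed, and they funnel you into the non-separable optimisation that you yourself flag as ``the main obstacle.'' That obstacle is a genuine gap, not a routine technicality: the objective $\sum_{(p,e)\in M}(p-1)\sum_{(q,f)\in M,\,q\ne p,\,y_{q,f}<y_{p,e}}\log\frac{f+1}{f}$ is not a function of the exponent profile $(x_f)$ alone, it is not linear in the $x_f$ (unlike the objective in Lemma~\ref{lem:numSubGroups}, whose Lagrange analysis you invoke by analogy), and the ``$\min(x_f,\pi(\cdots))$'' estimate mixes the profile with the individual $y_{p,e}$-values in a way that your Lagrange sketch never disentangles. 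Nothing in the proposal actually establishes that the threshold profile is the maximiser, or that the quantity for an arbitrary maximiser is dominated by $Q(N(x^*),<)$, so Step~3 would need a substantially longer argument to close.

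The paper avoids the issue entirely, and it is worth seeing why. Rather than first passing to a canonical ordering and then hunting for an extremal multiset, the paper bounds the inner sum $\sum_{(q,f)\prec(p,e)}\log\frac{f+1}{f}$ for each $(p,e)\in\la N\ra_\delta$ individually: this sum is exactly $\sum_{(q,f)\in\la M(p,e)\ra}\log\frac{f+1}{f}$ for $M(p,e) := \frac1p\prod_{(q,f)\prec(p,e)}q$, so Lemma~\ref{cor:NumsubgroupsBigPrimes} (which is the \emph{separable} optimisation that Lemma~\ref{lem:numSubGroups} does solve by Lagrange multipliers) gives at once
$$\sum_{(q,f)\prec(p,e)}\log\left(\frac{f+1}{f}\right) \le \big(2\sqrt{\tau}+o(1)\big)\left(\frac{\G_\delta(M(p,e))}{\log\G_\delta(M(p,e))}\right)^{1/2} + \frac{1}{\delta}\log\G(N).$$
Then, since $\G_\delta(M(p',e'))-\G_\delta(M(p,e))=p-1$ whenever $(p',e')$ immediately follows $(p,e)$ in the restriction of $\prec$ to $\la N\ra_\delta$, the outer sum $\sum_{(p,e)\in\la N\ra_\delta}(p-1)\big(\G_\delta(M(p,e))/\log\G_\delta(M(p,e))\big)^{1/2}$ is a Riemann sum for $\int_0^{\G_\delta(N)}(t/\log t)^{1/2}\,dt$, and Lemma~\ref{lem:sumOfSquareRoots} supplies exactly the resulting $\frac{2}{3}m^{3/2}(\log m)^{-1/2}$ bound. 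No reduction to a canonical ordering and no maximisation over multisets is required; the telescoping structure makes the bound hold for every ordering $\prec$. You should rework Step~3 along these lines (or else supply a full argument for the non-separable extremal problem, which is considerably harder than ``routine'').
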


We first use Lemma~\ref{lem:numSubGroups} to obtain the following bound when $\G_\delta(M)$ is large. 

\begin{lemma} \label{cor:NumsubgroupsBigPrimes}
Let $M \in \N$ and $\delta > 0$. Then 
$$\sum_{(q,f) \in \la M \ra} \log\left(\frac{f+1}{f}\right) \leq \big( 2\sqrt{\tau} + o(1) \big) \left( \frac{\G_\delta(M)}{\log \G_\delta(M)} \right)^{1/2} + \frac{2}{\delta}\log \G(M)$$
as $\G_\delta(M) \to \infty$.
\end{lemma}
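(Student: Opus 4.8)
The plan is to reduce the claim to a telescoping identity followed by a direct appeal to Lemma~\ref{lem:numSubGroups}. Writing $M = q_1^{\gamma_1} \cdots q_r^{\gamma_r}$, observe first that for each prime $q_i$ the inner sum telescopes:
$$\sum_{f = 1}^{\gamma_i} \log\left( \frac{f+1}{f} \right) = \log(\gamma_i + 1),$$
so that $\sum_{(q,f) \in \la M \ra} \log\big( \frac{f+1}{f} \big) = \sum_{i = 1}^r \log(\gamma_i + 1)$. I would then split this sum according to whether $q_i > \delta^{-1}$ or $q_i \le \delta^{-1}$, and bound the two parts separately.

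For the primes exceeding $\delta^{-1}$, note that the pairs $(q_i,f) \in \la M \ra$ with $q_i > \delta^{-1}$ are exactly the elements of $\la M \ra_\delta$, and hence $\sum_{q_i > \delta^{-1}} \gamma_i(q_i - 1) = \G_\delta(M)$ by definition. Since Lemma~\ref{lem:numSubGroups} is stated for an \emph{arbitrary} sequence of distinct primes, I would apply it to the primes $q_i > \delta^{-1}$, their exponents $\gamma_i$, and $M' := \G_\delta(M)$ (the hypothesis $M' \ge \sum \gamma_i(q_i-1)$ holding with equality), obtaining
$$\sum_{q_i > \delta^{-1}} \log(\gamma_i + 1) \le \big( 2\sqrt{\tau} + o(1) \big) \left( \frac{\G_\delta(M)}{\log \G_\delta(M)} \right)^{1/2}$$
as $\G_\delta(M) \to \infty$, which gives the first term. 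For the primes $q_i \le \delta^{-1}$, there are at most $\delta^{-1}$ of them, and for each one $\gamma_i \le \gamma_i(q_i - 1) \le \G(M)$ (the middle inequality being an equality when $q_i = 2$), so $\log(\gamma_i+1) \le \log(\G(M)+1)$ and
$$\sum_{q_i \le \delta^{-1}} \log(\gamma_i + 1) \le \frac{1}{\delta} \log\big( \G(M) + 1 \big) \le \frac{1}{\delta} \log \G(M) + o(1).$$
Since $\G(M) \ge \G_\delta(M) \to \infty$, this $o(1)$ error is absorbed into the $o(1)$ coefficient of $(\G_\delta(M)/\log\G_\delta(M))^{1/2} \ge 1$, and adding the two bounds yields the lemma.

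There is no genuine obstacle here: the entire content sits in Lemma~\ref{lem:numSubGroups}, and the only points requiring a moment's care are verifying that $\sum_{q_i > \delta^{-1}} \gamma_i(q_i-1)$ equals $\G_\delta(M)$ exactly (so that Lemma~\ref{lem:numSubGroups} is invoked with precisely the right value of $M'$), and that the crude bound $\gamma_i \le \G(M)$ survives the case $q_i = 2$. If one preferred to avoid the mild $o(1)$-bookkeeping in the last display, one could instead use $\log(\gamma_i+1) \le \log(\G(M)+1) \le 2\log\G(M)$ for $\G(M) \ge 2$, at the cost of weakening $1/\delta$ to $2/\delta$; keeping the sharper form costs essentially nothing.
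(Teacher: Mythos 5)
Your proof is correct and follows essentially the same route as the paper: telescope the inner sum to $\sum_i \log(\gamma_i+1)$, split the primes at the threshold $\delta^{-1}$, apply Lemma~\ref{lem:numSubGroups} to the large-prime part (with $\sum_{q_i>\delta^{-1}}\gamma_i(q_i-1)=\G_\delta(M)$), and bound the small-prime part crudely using that there are at most $\delta^{-1}$ such primes. The only cosmetic difference is that the paper avoids your small $o(1)$ absorption step by observing directly that $\gamma_i + 1 \le \G(M)$ (valid once $\G_\delta(M) > 0$, since some large prime contributes at least $1$ to $\G(M)$), giving $\log(\gamma_i+1) \le \log\G(M)$ outright.
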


\begin{proof}
Let $M = p_1^{\g_1} \cdots p_m^{\g_m}$, and observe that
\begin{align*}
\sum_{(q,f) \in \la M \ra} \log\left(\frac{f+1}{f}\right) & \, = \sum_{i = 1}^m \log\big( \g_i + 1 \big) = \sum_{i \,:\, p_i > \delta^{-1}} \log\big( \g_i + 1 \big) + \sum_{i \,:\, p_i \le \delta^{-1}} \log\big( \g_i + 1 \big) \\
& \, \le \big( 2\sqrt{\tau} + o(1) \big) \left( \frac{\G_\delta(M)}{\log \G_\delta(M)} \right)^{1/2} + \frac{2}{\delta}\log \G(M)
\end{align*} 
as $\G_\delta(M) \to \infty$, as required, by Lemma~\ref{lem:numSubGroups} applied to the sequence $\g_i \cdot \mathbbm{1}\big[ p_i > \delta^{-1} \big]$, and by the bound $\g_i \le \G(M)$, which holds for every $i \in [m]$. 
\end{proof}

When $\G_\delta(M)$ is bounded and $\G(M) \to \infty$, we will instead use the bound
\begin{equation}\label{eq:GdeltaM:bounded}
\sum_{(q,f) \in \la M \ra} \log\left(\frac{f+1}{f}\right) \, \le \, \frac{3}{\delta}\log \G(M),
\end{equation}
which follows from the proof above by using the trivial bound $\log( \g_i + 1 ) \le \G_\delta(M)$ for the (bounded number of) large primes $p_i$, instead of applying Lemma~\ref{lem:numSubGroups}. 

We will also need the following easy lemma.

\begin{lemma}\label{lem:sumOfSquareRoots}
Let $2 \le m_0 < m_1 < \cdots < m_\ell \le m$. Then
$$\sum_{i = 0}^{\ell-1} \left(\frac{m_i}{\log m_i} \right)^{1/2} \big( m_{i+1} - m_i \big) \, \le \, \bigg( \frac{2}{3} + o(1) \bigg) \frac{m^{3/2}}{(\log m )^{1/2}}$$
as $m \to \infty$.
\end{lemma}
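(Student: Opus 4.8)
The statement is a Riemann-sum comparison. The plan is to recognize the left-hand side as a lower Riemann sum for the integral $\int_0^m \sqrt{t/\log t}\,dt$, and then evaluate that integral asymptotically.

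\medskip

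\noindent\textbf{Step 1: Bounding the sum by an integral.}
Since $t \mapsto \sqrt{t/\log t}$ is increasing for $t > e$ (the numerator grows, the denominator's reciprocal decreases), we have $\big(m_i/\log m_i\big)^{1/2} \le \big(t/\log t\big)^{1/2}$ for every $t \in [m_i, m_{i+1}]$. Hence
$$\left(\frac{m_i}{\log m_i}\right)^{1/2} \big( m_{i+1} - m_i \big) \, \le \, \int_{m_i}^{m_{i+1}} \left( \frac{t}{\log t} \right)^{1/2} dt,$$
and summing over $i = 0, \ldots, \ell - 1$ telescopes to give
$$\sum_{i=0}^{\ell-1} \left(\frac{m_i}{\log m_i}\right)^{1/2} \big( m_{i+1} - m_i \big) \, \le \, \int_{m_0}^{m_\ell} \left( \frac{t}{\log t} \right)^{1/2} dt \, \le \, \int_{e}^{m} \left( \frac{t}{\log t} \right)^{1/2} dt,$$
using $m_0 > e$ and $m_\ell \le m$ (the integrand is positive on $(e,m]$).

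\medskip

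\noindent\textbf{Step 2: Asymptotics of the integral.}
It remains to show $\int_e^m \sqrt{t/\log t}\,dt = \big(\tfrac23 + o(1)\big) m^{3/2}/(\log m)^{1/2}$ as $m \to \infty$. This is a routine estimate: the integrand is $\sqrt{t}\,(\log t)^{-1/2}$, and on $[e, \sqrt{m}]$ the integral is $O(m^{3/4})$, which is negligible. On $[\sqrt m, m]$ we have $\log t = (1+o(1))\log m$ uniformly, so
$$\int_{\sqrt m}^{m} \left( \frac{t}{\log t} \right)^{1/2} dt = \frac{1 + o(1)}{(\log m)^{1/2}} \int_{\sqrt m}^{m} t^{1/2}\,dt = \frac{1 + o(1)}{(\log m)^{1/2}} \cdot \frac{2}{3}\big( m^{3/2} - m^{3/4} \big) = \bigg( \frac{2}{3} + o(1) \bigg) \frac{m^{3/2}}{(\log m)^{1/2}}.$$
(Alternatively, one integration by parts applied to $\int \sqrt t\,(\log t)^{-1/2}\,dt$ shows directly that the antiderivative is $(1+o(1))\tfrac23 t^{3/2}(\log t)^{-1/2}$.) Combining the two steps completes the proof.

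\medskip

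\noindent There is essentially no obstacle here; the only mild care needed is to confirm monotonicity of the integrand on the relevant range and to handle the lower end of the integral (where $\log t$ is small), which is why the split at $\sqrt m$ is convenient. Since the hypothesis guarantees $m_0 > e$, all the $m_i$ lie in the region where the integrand is well-behaved.
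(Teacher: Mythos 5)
Your overall strategy is exactly the natural one (the paper just labels the lemma ``easy'' and gives no proof): treat the sum as a lower Riemann sum for $\int \sqrt{t/\log t}\,dt$ (Step~1, which is correct and clean), then estimate the integral. But your Step~2 as written contains a genuine error in the estimation.

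The problem is the cut point $\sqrt m$. You claim ``on $[\sqrt m, m]$ we have $\log t = (1+o(1))\log m$ uniformly,'' but this is false: at $t = \sqrt m$ one has $\log t = \tfrac12 \log m$, so $\log t$ ranges over an interval in which it varies by a factor of $2$, not by $1+o(1)$. Carrying through your estimate honestly you would only get
$$\int_{\sqrt m}^{m} \left( \frac{t}{\log t} \right)^{1/2} dt \, \le \, \frac{1}{\big( \tfrac12 \log m \big)^{1/2}} \int_{\sqrt m}^m t^{1/2}\,dt \, = \, \bigg( \frac{2\sqrt2}{3} + o(1) \bigg)\frac{m^{3/2}}{(\log m)^{1/2}},$$
which is weaker than the claimed $\tfrac23$. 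And the slack is real: a constant fraction of $\int_{\sqrt m}^m t^{1/2}\,dt$ comes from $t \in [\sqrt m, m/2]$, where $\log t$ is genuinely bounded away from $\log m$, so you cannot wave this away as a lower-order contribution.

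The fix is small: cut at a point $m^{1-\epsilon}$ with $\epsilon = \epsilon(m) \to 0$ slowly (for instance $m/(\log m)^2$, or $m^{1 - 1/\log\log m}$). Then on the tail $[e, m^{1-\epsilon}]$ the integral is still negligible (e.g.\ with the cut at $m/(\log m)^2$ it is at most $m^{3/2}/(\log m)^{3}$), while on the main part $[m^{1-\epsilon}, m]$ one does have $\log t = (1+o(1))\log m$ uniformly, and your computation then gives the correct constant $\tfrac23$. Alternatively, the integration-by-parts route you mention parenthetically is also correct and avoids the issue entirely: $\int \sqrt t\,(\log t)^{-1/2}\,dt = \tfrac23 t^{3/2}(\log t)^{-1/2} + \tfrac13\int \sqrt t\,(\log t)^{-3/2}\,dt$, and the second integral is $O\big(m^{3/2}(\log m)^{-3/2}\big)$ by the same cut-point argument, hence lower order. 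Either repair is short; with it your proof is complete.
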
 

We can now prove Lemma~\ref{lem:FrameCount}.

\begin{proof}[Proof of Lemma~\ref{lem:FrameCount}]
Observe first that, by Lemma~\ref{cor:NumsubgroupsBigPrimes} applied with $M = M_\prec(p,e)$, we have
$$\sum_{(q,f) \in \< M_\prec(p,e) \>} \log\left( \frac{f+1}{f} \right) \le \big( 2\sqrt{\tau} + o(1) \big) \bigg( \frac{\G_\delta\big( M_\prec(p,e) \big)}{\log \G_\delta\big( M_\prec(p,e) \big) } \bigg)^{1/2} + \frac{2}{\delta}\log \G\big( M_\prec(p,e) \big)$$
as $\G_\delta(M_\prec(p,e)) \to \infty$, and that when $\G_\delta(M_\prec(p,e))$ is bounded, by~\eqref{eq:GdeltaM:bounded} we have 
$$\sum_{(q,f) \in \< M_\prec(p,e) \>} \log\left(\frac{f+1}{f}\right) \, \le \, \frac{3}{\delta} \log \G(N),$$
as $\G(N) \to \infty$, since $\G(M_\prec(p,e)) \le \G(N)$. Thus, summing over $(p,e) \in \< N \>_\delta$, it follows that 
$$Q_\delta(N,\prec) \le \big( 2\sqrt{\tau} + o(1) \big) \sum_{(p,e) \in \la N \ra_\delta} (p-1)  \bigg( \frac{\G_\delta\big( M_\prec(p,e) \big)}{\log \G_\delta\big( M_\prec(p,e) \big)} \bigg)^{1/2} +  \frac{3}{\delta} \, \G(N) \log \G(N)$$ 
as $N \to \infty$ (which, in particular, implies that $\G(N) \to \infty$). 

Next, we apply Lemma~\ref{lem:sumOfSquareRoots} with $(m_0,\ldots,m_\ell) = \big(  \G_\delta\big( M_\prec(p,e) \big) \big)_{(p,e) \in \< N \>_\delta}$ and $m = \G_\delta(N)$. Note that if $(p',e')$ immediately follows $(p,e)$ in the ordering $\prec$ restricted to $\la N \ra_{\delta}$, then 
$$\G_\delta\big( M_\prec(p',e') \big) - \G_\delta\big( M_\prec(p,e) \big) = p - 1,$$
since $\G_\delta$ only counts the large primes. It follows that
$$Q_\delta(N,\prec) \, \le \left( \frac{4\sqrt{\tau}}{3} + o(1)\right) \frac{\G_\delta(N)^{3/2}}{\big( \log \G_\delta(N) \big)^{1/2}} + \frac{3}{\delta} \, \G(N) \log \G(N),$$
as $N \to \infty$. Since $\G_{\delta}(N) > \delta \cdot \G(N)$, by assumption, we obtain~\eqref{eq:FrameCount}, as required.
\end{proof}

Before continuing, let us observe that the condition $\G_{\delta}(N) > \delta \cdot \G(N)$ in the statement of Lemma~\ref{lem:FrameCount} (which in any case could be weakened considerably) is not a serious restriction, since we can easily obtain, using the method of Section~\ref{counting:weird:frames:sec}, a suitable bound on the number of minimal covering systems whose least common multiple has mostly small prime factors. 

}
{\setstretch{1.2}

\begin{lemma}\label{lem:many:small:primes} 
Let $\beta,\delta > 0$ be constants and let $n \in \N$ and $N \in \N$ with $\G_\delta(N) \le \beta \cdot \G(N)$. The number of minimal covering systems $\cA$ of\/ $\Z$ of size $n$ with $\lcm(\cA) = N$ is at most
$$\exp\bigg( \big( 2\sqrt{\beta \tau} + o(1) \big) \frac{n^{3/2}}{(\log n)^{1/2}} \bigg)$$
as $n \to \infty$.
\end{lemma}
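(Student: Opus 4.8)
\textbf{Proof idea.} The plan is to follow closely the proof of Proposition~\ref{lem:LargemCrudeBound}: we first bound the number of choices for the moduli of the $n$ progressions in $\cA$, and then apply Lemma~\ref{lem:counting:shifts} to bound the number of choices for the shifts. The only new point is that, when bounding the number of choices for a single modulus, we exploit the hypothesis $\G_\delta(N) \le \beta \cdot \G(N)$ together with Simpson's theorem in place of the crude bound on $n$ used in the proof of Proposition~\ref{lem:LargemCrudeBound}.

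To begin, recall that by Theorem~\ref{Simpson:thm} every minimal covering system $\cA$ of $\Z$ of size $n$ with $\lcm(\cA) = N = p_1^{\g_1} \cdots p_m^{\g_m}$ satisfies $n \ge \G(N) + 1$, and hence $\G(N) < n$ and $\G_\delta(N) < \beta n$. Since the modulus of every progression in $\cA$ divides $N$, there are at most $\prod_{i=1}^m (\g_i + 1)$ choices for each such modulus, and thus at most $\big( \prod_{i=1}^m (\g_i + 1) \big)^n$ choices for the sequence of moduli. To bound $\sum_{i=1}^m \log(\g_i + 1)$ we split the sum according to whether $p_i > \delta^{-1}$ or not. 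Applying Lemma~\ref{lem:numSubGroups} to the sequence $\g_i \cdot \mathbbm{1}[p_i > \delta^{-1}]$, with $M = \G_\delta(N)$, gives
$$\sum_{i \,:\, p_i > \delta^{-1}} \log(\g_i + 1) \, \le \, \big( 2\sqrt{\tau} + o(1) \big) \left( \frac{\G_\delta(N)}{\log \G_\delta(N)} \right)^{1/2} \, \le \, \big( 2\sqrt{\beta \tau} + o(1) \big) \left( \frac{n}{\log n} \right)^{1/2}$$
as $\G_\delta(N) \to \infty$, where the last step uses $\G_\delta(N) < \beta n$ and the monotonicity of $t \mapsto (t/\log t)^{1/2}$; if instead $\G_\delta(N)$ remains bounded then this sum is $O(1)$ and the argument only becomes easier. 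For the small primes, there are at most $\delta^{-1}$ indices $i$ with $p_i \le \delta^{-1}$, and $\g_i + 1 \le \G(N) + 1 < n + 1$ for every such $i$, so $\sum_{i \,:\, p_i \le \delta^{-1}} \log(\g_i + 1) \le \delta^{-1} \log(n+1) = O(\log n)$. Combining the two estimates, $\sum_{i=1}^m \log(\g_i + 1) \le \big( 2\sqrt{\beta\tau} + o(1) \big) (n/\log n)^{1/2}$.

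It remains to assemble the pieces. The number of choices for the sequence of moduli is at most
$$\exp\bigg( n \sum_{i=1}^m \log(\g_i + 1) \bigg) \, \le \, \exp\bigg( \big( 2\sqrt{\beta\tau} + o(1) \big) \frac{n^{3/2}}{(\log n)^{1/2}} \bigg),$$
and, by Lemma~\ref{lem:counting:shifts}, given the moduli there are at most $(n!)^2 = \exp\big( O(n \log n) \big) = \exp\big( o(n^{3/2}/(\log n)^{1/2}) \big)$ choices for the shifts. Multiplying these two bounds gives the claimed estimate. There is essentially no obstacle here; the only mild subtlety is the degenerate case in which $\G_\delta(N)$ does not tend to infinity (so that Lemma~\ref{lem:numSubGroups} would be applied with a bounded argument), but in that case the large-prime contribution to $\sum_i \log(\g_i + 1)$ is trivially $O(1)$, the entire count is $\exp(O(n\log n))$, and this is comfortably within the stated bound.
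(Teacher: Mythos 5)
Your proof is correct and follows essentially the same route as the paper's: both bound the number of modulus choices per progression by splitting the sum $\sum_i \log(\gamma_i+1)$ over large primes ($p_i > \delta^{-1}$, handled via Lemma~\ref{lem:numSubGroups}) and small primes (handled by a crude $O(\delta^{-1}\log n)$ bound), then invoke Lemma~\ref{lem:counting:shifts} for the shifts. The only cosmetic difference is that the paper packages this large/small split into the intermediate Lemma~\ref{cor:NumsubgroupsBigPrimes} and cites it directly, whereas you carry out the split inline.
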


\begin{proof}
The proof is essentially the same as that of Proposition~\ref{lem:LargemCrudeBound}, but we use Lemma~\ref{cor:NumsubgroupsBigPrimes} in place of Lemma~\ref{lem:numSubGroups} to count the choices of the moduli. To be more precise, in order to count the minimal covering systems $\cA = \{A_1,\ldots,A_n\}$ of $\Z$ of size $n$ with $\lcm(\cA) = N$, we will first choose the moduli, and then the shifts. Observe first that, for each $j \in [n]$, we have at most 
$$\prod_{(p,e) \in \la N \ra} \left(\frac{e+1}{e}\right) \le \exp\bigg( \big( 2\sqrt{\tau} + o(1) \big) \left( \frac{\G_\delta(N)}{\log \G_\delta(N)} \right)^{1/2} + \frac{2}{\delta}\log \G(N) \bigg)$$
choices for the modulus of the arithmetic progression $A_j$. Indeed, the left-hand side is simply the number of divisors of $N$, and the inequality follows from Lemma~\ref{cor:NumsubgroupsBigPrimes}. 

Now, since (by assumption and by Simpson's theorem) we have $\G_\delta(N) \le \beta \cdot \G(N) \le \beta n$, it follows by Lemma~\ref{lem:counting:shifts} that the number of choices for $\cA$ is at most
$$(n!)^2 \cdot \exp\bigg( \big( 2\sqrt{\beta \tau} + o(1) \big) \frac{n^{3/2}}{(\log \beta n)^{1/2}} + \frac{2n \log n}{\delta} \bigg) = \exp\left( \big( 2\sqrt{\beta \tau} + o(1) \big) \frac{n^{3/2}}{(\log n)^{1/2}} \right)$$
as $n \to \infty$, as claimed.
\end{proof}

}

\subsection{Proof of Theorem~\ref{thm:MainCountingThm}}\label{FinalProofSec}

{\setstretch{1.2}

We are finally ready to put together the pieces and deduce our main counting result. We will need the following easy bound. 

\begin{lemma}\label{lem:techLemma} 
For every $m \in \N$ and $x \ge 0$, we have 
$$\frac{(m+x)^{3/2}}{(\log(m+x))^{1/2} }  \, \ge \, \frac{ m^{3/2}}{(\log m )^{1/2}} + \bigg( \frac{3}{2} + o(1) \bigg) \left( \frac{m}{\log m} \right)^{1/2} x,$$
as $m \to \infty$. 
\end{lemma}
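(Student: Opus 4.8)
The plan is to recognise both sides as values of the single function $f(t) := t^{3/2}/(\log t)^{1/2}$, defined for $t > 1$: the left-hand side is exactly $f(m+x)$ and the first term on the right is $f(m)$. So it suffices to prove the tangent-line bound
$$f(m+x) \ \ge\ f(m) + f'(m)\cdot x \qquad \text{for all } x \ge 0 \text{ and all sufficiently large } m,$$
because a direct differentiation gives
$$f'(t) \ =\ \frac{t^{1/2}}{(\log t)^{1/2}}\left( \frac{3}{2} - \frac{1}{2\log t} \right),$$
so that $f'(m) = \big( \tfrac{3}{2} + o(1) \big)\big( m / \log m \big)^{1/2}$ as $m \to \infty$ (the $o(1)$ term being $-1/(2\log m)$, and crucially not involving $x$). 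This is precisely the inequality claimed, with this choice of the $o(1)$ function.

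The one substantive step is to check that $f'$ is increasing on $[e,\infty)$. I would do this by factoring $f'(t) = g(t)\,h(t)$ with $g(t) := t^{1/2}(\log t)^{-1/2}$ and $h(t) := \tfrac{3}{2} - \tfrac{1}{2\log t}$, noting that $g, h > 0$ on $[e,\infty)$, that $h$ is manifestly increasing, and that $g'(t) = \tfrac{1}{2} t^{-1/2}(\log t)^{-1/2}\big(1 - 1/\log t\big) > 0$ for $t > e$; a product of two positive increasing functions is increasing, so $f'$ is increasing on $[e,\infty)$.

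With that in hand, the rest is the fundamental theorem of calculus: for $m \ge e$ and $x \ge 0$,
$$f(m+x) - f(m) \ =\ \int_m^{m+x} f'(t)\,dt \ \ge\ \int_m^{m+x} f'(m)\,dt \ =\ f'(m)\cdot x,$$
and substituting $f'(m) = \big( \tfrac{3}{2} + o(1) \big)\big( m / \log m \big)^{1/2}$ gives the statement. Since the lemma is asymptotic in $m$, the requirement "$m$ large enough that $m \ge e$ (so that $f$ and $f'$ are well defined and positive on $[m,\infty)$)" is harmless.

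I do not expect a real obstacle here; the only mildly delicate point is the monotonicity of $f'$, and it is exactly the awkward factor $h(t) = \tfrac32 - \tfrac1{2\log t}$ that both makes that check nontrivial and accounts for the $o(1)$ correction to the constant $\tfrac32$. (If one wanted to avoid even this, one could instead verify $f(m+x) \ge f(m) + f'(m)x$ via convexity-type manipulations, but the calculus argument above seems cleanest.)
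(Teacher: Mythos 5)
The paper states this lemma without proof (calling it ``an easy bound''), so there is no official argument to compare against. Your proof is correct and is the natural one: writing both sides via $f(t) = t^{3/2}(\log t)^{-1/2}$, computing $f'(t) = t^{1/2}(\log t)^{-1/2}\big(\tfrac32 - \tfrac{1}{2\log t}\big)$, verifying that $f'$ is increasing on $[e,\infty)$ by factoring it as a product of two positive increasing functions, and then invoking the tangent-line inequality $f(m+x) \ge f(m) + f'(m)\,x$. The identification of the $o(1)$ term as $-1/(2\log m)$, and the observation that it depends only on $m$ and not on $x$, addresses the one point that could have been overlooked. The remark that the statement is vacuous or undefined for very small $m$ (where $\log m < 1$) and that this is absorbed by the asymptotic formulation is also correct.
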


We can now deduce Theorem~\ref{thm:MainCountingThm} from Theorem~\ref{thm:mainStructureThm}, Propositions~\ref{lem:NumbOfFrames} and~\ref{lem:LargemCrudeBound}, Lemma~\ref{lem:counting:shifts}, Simpson's theorem, and the results of this section. 

\begin{proof}[Proof of Theorem~\ref{thm:MainCountingThm}] 
The lower bound follows immediately from Proposition~\ref{lem:NumbOfFrames}, so we will prove the upper bound. Observe first (cf.~Section~\ref{counting:weird:frames:sec}) that, by Simpson's theorem, if $\cA$ is a minimal covering system of $\Z$ of size $n$, then $\lcm(\cA) \le 2^n$. We may therefore fix $N \le 2^n$, and consider only covering systems $\cA$ such that $\lcm(\cA) = N$. We associate each progression $A \in \cA$ with an arithmetic hyperplane in $S_{\< N \>}$ using the bijection $\varphi_N$, as described above. 

Let $\eps > 0$ be an arbitrarily small constant, set $C = 4$, and let $\delta = \delta(C,\eps) > 0$ be the constant given by Theorem~\ref{thm:mainStructureThm}. Suppose first that either $n > 4\G(N)$ or $\G(N) > 4 \G_\delta(N)$. Then, by Proposition~\ref{lem:LargemCrudeBound} and Lemma~\ref{lem:many:small:primes}, there are at most 
$$\exp\left( \big( \sqrt{\tau} + o(1) \big) \frac{n^{3/2}}{(\log n)^{1/2}} \right)$$
minimal covering systems $\cA$ of $\Z$ of size $n$ with $\lcm(\cA) = N$, as required. By Simpson's theorem, let us therefore assume from now on that $\G(N) \le n \le 4\G(N) \le 2^4 \G_\delta(N)$.

{\setstretch{1.25}

By Theorem~\ref{thm:mainStructureThm} (and our choice of $\delta$), every minimal covering system $\cA$ of $\Z$ of size $n \le 4\G(N)$ with $\lcm(\cA) = N$ contains a $\delta$-generalized frame $\big( \FF_{(p,e)} : (p,e) \in \< N \> \big)$, with 
\begin{equation}\label{eq:frame:size:final:proof}
\sum_{(p,e) \in \< N \>} |\FF_{(p,e)}| \ge (1 - \eps) \G(N).
\end{equation}
Since $\G(N) \le 4 \G_\delta(N)$, it follows by Lemmas~\ref{lem:general:upper:bound} and~\ref{lem:FrameCount} that the number of sequences $\big( \cD(\FF_{(p,e)}) : (p,e) \in \< N \>_\delta \big)$ such that $\big( \FF_{(p,e)} : (p,e) \in \< N \> \big)$ is a $\delta$-generalized frame in $S_{\< N \>}$ consisting only of arithmetic hyperplanes is at most
\begin{equation}\label{eq:frame:choices:final:proof}
\G(N)! \cdot \exp\bigg( \left( \frac{4\sqrt{\tau}}{3} + o(1)\right) \frac{\G_\delta(N)^{3/2}}{\big( \log \G_\delta(N) \big)^{1/2}} + \frac{\G_\delta(N)}{\delta^2} \bigg),
\end{equation}
where the factor of $\G(N)!$ bounds (noting that $\G(N) \ge |\< N \>|$) the number of choices for the ordering $\prec$ on $\< N\>$ associated with the $\delta$-generalized frame. 

We next need to count the choices of the moduli for the remaining arithmetic progressions in $\cA$, that is, those corresponding to hyperplanes that are not included in $\FF_{(p,e)}$ for any $(p,e) \in \< N \>_\delta$. Recall (from Definition~\ref{def:generalizedframe}) that the sets $\FF_{(p,e)}$ with $(p,e) \in \< N \>_\delta$ are pairwise disjoint, so there are exactly 
$$x := n - \sum_{(p,e) \in \< N \>_\delta} |\FF_{(p,e)}|,$$
such arithmetic progressions in $\cA$. 
We bound the number of choices for the fixed sets of these remaining hyperplanes in $\cA$ using Lemma~\ref{cor:NumsubgroupsBigPrimes}, which implies that we have at most
$$\prod_{(p,e) \in \la N \ra} \left(\frac{e+1}{e}\right) \le \exp\bigg( \big( 2\sqrt{\tau} + o(1) \big) \left( \frac{\G_\delta(N)}{\log \G_\delta(N)} \right)^{1/2} + \frac{2}{\delta}\log \G(N) \bigg)$$
choices for each. Combining this bound with~\eqref{eq:frame:choices:final:proof}, and recalling that $\G(N) \le n \le 2^4 \G_\delta(N)$, it follows that we have at most
\begin{equation}\label{eq:moduli:choices:final:proof}
\exp\bigg( \left( \frac{4\sqrt{\tau}}{3} + o(1)\right) \frac{\G_\delta(N)^{3/2}}{\big( \log \G_\delta(N) \big)^{1/2}} +\big( 2\sqrt{\tau} + o(1) \big) \left( \frac{\G_\delta(N)}{\log \G_\delta(N)} \right)^{1/2} x \bigg)
\end{equation}
choices for the moduli of the arithmetic progressions in $\cA$, given $N$ and $x$. 

In order to bound~\eqref{eq:moduli:choices:final:proof}, we apply Lemma~\ref{lem:techLemma} with $m = \G_\delta(N)$. Since $0 \le x \le n$ and $2^{-4} n \le m \le n$, we obtain
\begin{equation}\label{eq:final:proof:techlemma}
\frac{\G_\delta(N)^{3/2}}{\big( \log \G_\delta(N) \big)^{1/2}} \,+\,  
\frac{3}{2} \left( \frac{\G_\delta(N)}{\log \G_\delta(N)} \right)^{1/2} x \, \le \, \frac{\big( \G_\delta(N) + x \big)^{3/2}}{\big( \log( \G_\delta(N) + x ) \big)^{1/2}} \,+\, \frac{o(n^{3/2})}{(\log n)^{1/2}}
\end{equation}
as $n \to \infty$. Now, since $|\FF_{(p,e)}| \le p - 1$ for each $(p,e) \in \< N \>$, it follows from~\eqref{eq:frame:size:final:proof} that
$$n - x = \sum_{(p,e) \in \< N \>_\delta} |\FF_{(p,e)}| \, \ge \, \G_\delta(N) - \eps \G(N) \, \ge \, \G_\delta(N) - \eps n,$$ 
so $\G_\delta(N) + x \le (1+\eps)n$, and hence
$$\frac{\big( \G_\delta(N) + x \big)^{3/2}}{\big( \log( \G_\delta(N) + x ) \big)^{1/2}} \, \le \, (1+\eps)^{3/2} \frac{n^{3/2}}{(\log n)^{1/2}}.$$
Combining this with~\eqref{eq:moduli:choices:final:proof} and~\eqref{eq:final:proof:techlemma}, it follows that we have at most 
$$\exp\bigg( \left( \frac{4\sqrt{\tau}}{3} + o(1)\right) (1+\eps)^{3/2} \frac{n^{3/2}}{(\log n)^{1/2}} \bigg)$$
choices for the moduli of the progressions in $\cA$. 

Finally, by Lemma~\ref{lem:counting:shifts}, it follows that there are at most
$$\exp\bigg( \left( \frac{4\sqrt{\tau}}{3} + O(\eps) \right) \frac{n^{3/2}}{(\log n)^{1/2}} \bigg)$$
minimal covering systems of $\Z$ of size $n$ with $\lcm(\cA) = N$. Since $\eps > 0$ was arbitrarily small, this completes the proof of Theorem~\ref{thm:MainCountingThm}.}
\end{proof}

}
{\setstretch{1.15}
\section*{Acknowledgements}

This research was partly carried out during a one-month visit by the authors to IMT Lucca, and partly during visits by various subsets of the authors to IMPA and to the University of Memphis. We are grateful to each of these institutions for their hospitality, and for providing a wonderful working environment. We are also grateful to Christian Elsholtz for pointing out the appearance of the constant $\tau$ in the iterated divisor function.

}

\appendix

\section{Proof of the geometric Simpson's theorem}\label{sec:appendix:Simpson}

{\setstretch{1.15}

In this appendix we will provide, for the reader's convenience, a proof of the following slight generalization of Simpson's theorem~\cite{Simp}. 

\begin{theorem}[Simpson's theorem] \label{geometric:Simpson}
Let $\cA$ be a minimal cover of $S_{[k]}$ with hyperplanes, and let $I \subsetneq F(\cA)$. Then 
$$\big| \big\{ H \in \cA : F(H) \not \subseteq I \big\} \big| \, \ge \sum_{i \in F(\cA) \setminus I} \big( |S_i| - 1 \big) + 1.$$
\end{theorem}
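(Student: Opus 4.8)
The plan is to prove the statement by induction on $d := |F(\cA) \setminus I|$, after first reducing to the case $F(\cA) = [k]$. The reduction is routine: if $i \notin F(\cA)$ then every $H \in \cA$ has $H_i = S_i$, so, viewing $\cA$ inside $S_{[k] \setminus \{i\}}$, it is still a minimal cover, with the same fixed sets, the same value of $|\{H \in \cA : F(H) \not\subseteq I\}|$, and the same right-hand side; iterating, we may assume $F(\cA) = [k]$, with target $\sum_{i \in [k] \setminus I} (|S_i| - 1) + 1$. The base case and the whole induction rest on the observation $(\star)$: \emph{if $\cB$ is a minimal cover of $S_{[k]}$ with hyperplanes and $j \in F(\cB)$, then for every $a \in S_j$ there is $H \in \cB$ with $H_j = a$}. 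To see this, pick $H^\ast \in \cB$ with $j \in F(H^\ast)$, write $b := H^\ast_j$; if $b = a$ we are done, and otherwise, with $x$ a point covered only by $H^\ast$ (so $x_j = b$) and $x'$ obtained from $x$ by changing the $j$th coordinate to $a$, any $H' \in \cB$ covering $x'$ must — if no hyperplane has $j$th coordinate $a$ — have $H'_j = S_j$, whence $H'$ also covers $x$ and $H' \ne H^\ast$, a contradiction. When $d = 1$, say $[k] \setminus I = \{j\}$, a hyperplane satisfies $F(H) \not\subseteq I$ iff $j \in F(H)$, and the $|S_j|$ hyperplanes supplied by $(\star)$ are distinct, settling the base case.

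For the inductive step, fix $j \in [k] \setminus I$ and set $J' := [k] \setminus (I \cup \{j\}) \ne \emptyset$. I would apply the operation of Section~\ref{operation:sec} to $\cA$ and the coordinate $j$: for each $b \in S_j$ one obtains $\cA_b \subseteq \cA$ with $H_j \in \{b, S_j\}$ for all $H \in \cA_b$, such that $\cA_b \leftrightarrow \cA_b' := \{H_{K_b} : H \in \cA_b\}$ is a bijection, $\cA_b'$ is a minimal cover of $S_{K_b}$ with $F(\cA_b') = K_b$ (where $K_b := F(\cA_b) \setminus \{j\}$), and, by Lemma~\ref{lem:coveringfixedpoints}, $\bigcup_{b \in S_j} K_b = [k] \setminus \{j\}$. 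Two further facts are needed. First, by the argument in the proof of Lemma~\ref{lem:coveringfixedpoints}, every $H \in \cA$ whose unique-covering point lies in the slice $\{x : x_j = b\}$ is forced into $\cA_b$; applying $(\star)$ to $\cA$, for each $b$ there is therefore $H^{(b)} \in \cA_b$ with $H^{(b)}_j = b$, and then $F(H^{(b)}) \setminus \{j\} \subseteq K_b$. Second, whenever $K_b \not\subseteq I$ we may apply the inductive hypothesis to $\cA_b'$ (with ground set $K_b$ and excluded set $I \cap K_b \subsetneq K_b$, noting $|K_b \setminus I| \le d-1$), and transport the resulting family back along the bijection to get $\cG_b \subseteq \cA_b$ with $|\cG_b| \ge \sum_{i \in K_b \setminus I}(|S_i|-1)+1$ and $F(H) \cap (K_b \setminus I) \ne \emptyset$ for every $H \in \cG_b$; since $K_b \setminus I \subseteq J' \subseteq [k] \setminus \{j\}$, each such $H$ has $F(H) \setminus \{j\} \not\subseteq I$.

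Now split $\{H \in \cA : F(H) \not\subseteq I\}$ into $A := \{H : F(H)\setminus\{j\} \not\subseteq I\}$ and $B := \{H : F(H)\setminus\{j\} \subseteq I \text{ and } j \in F(H)\}$ (a disjoint union, since $j\notin I$). The hyperplanes $H^{(b)}$ with $K_b \subseteq I$ lie in $B$ and are pairwise distinct, so $|B| \ge |\{b : K_b \subseteq I\}|$. On the other hand $A \supseteq \bigcup_{b:\,K_b\not\subseteq I} \cG_b$, and — \emph{assuming the $\cG_b$ are pairwise disjoint} —
\[
|A| \;\ge\; \sum_{b \,:\, K_b \not\subseteq I} |\cG_b| \;\ge\; \sum_{b \,:\, K_b \not\subseteq I} \Big( \sum_{i \in K_b \setminus I} (|S_i| - 1) + 1 \Big) \;\ge\; \sum_{i \in J'} (|S_i| - 1) + \big| \{ b : K_b \not\subseteq I \} \big|,
\]
where the last step uses $\bigcup_b K_b \supseteq J'$. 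Adding the two bounds gives $|A| + |B| \ge \sum_{i \in J'}(|S_i|-1) + |S_j| = \sum_{i \in [k]\setminus I}(|S_i|-1)+1$, as required. Note this also recovers Theorem~\ref{Simpson:thm} by taking $I = \emptyset$.

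The hard part will be exactly the parenthetical assumption: a hyperplane with $j$th coordinate $S_j$ can belong to $\cA_b$ for several values $b$ (precisely when it is ``forced'' in several slices), so the inductive hypothesis alone produces $\cG_b$ only as \emph{some} large family inside $\{H \in \cA_b : F(H)\cap(K_b\setminus I)\ne\emptyset\}$, and naive unions over $b$ overcount. I would handle this by making all choices coherently — fixing one unique-covering point per hyperplane of $\cA$, and then selecting the minimal sub-covers $\cA_b'$ greedily in a fixed order of the values $b$ — so that each hyperplane is assigned to a single slice; this is the same device by which the depth-first search in Section~\ref{frame:structure:sec} forces frame pieces from different branches to be disjoint, and once the assignment is in place the $\cG_b$ can be taken disjoint without affecting the counts above.
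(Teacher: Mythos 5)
Your setup --- reducing to $F(\cA)=[k]$, observation $(\star)$, slicing on a coordinate $j$, and splitting $\{H:F(H)\not\subseteq I\}$ into $A$ and $B$ --- matches the paper's appendix argument in spirit, and the accounting $|A|+|B|\ge\sum_{i\in J'}(|S_i|-1)+|S_j|$ is exactly right \emph{if} the families $\cG_b$ were pairwise disjoint. But the gap you flag is genuine, and your proposed repair (fixing a unique-covering point per hyperplane, choosing the $\cA_b'$ greedily) does not close it. The problem is not an ambiguous ``assignment'' of hyperplanes to slices: once $\cA_b$ is chosen, the inductive hypothesis applied to $\cA_b'$ with excluded set $I\cap K_b$ counts the \emph{entire} set $\cG_b=\{H\in\cA_b: F(H)\cap K_b\not\subseteq I\}$, and a hyperplane $H$ with $H_j=S_j$ genuinely belongs to $\cG_b$ for every $b$ whose minimal subcover uses it. For a concrete failure, take $k=3$, $S_1=S_2=S_3=\{1,2\}$, $I=\emptyset$, $j=3$, and the minimal cover $\cA=\{[1,*,*],\,[2,1,*],\,[2,2,1],\,[2,2,2]\}$, which has $|\cA|=4$, tight for Simpson. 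Both slices give $K_1=K_2=\{1,2\}\not\subseteq I$, both $\cG_1$ and $\cG_2$ have size $3$, and $\cG_1\cap\cG_2=\{[1,*,*],\,[2,1,*]\}$, so $\sum_b|\cG_b|=6>4=|\cA|$ and the displayed chain of inequalities overshoots.

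The idea you are missing is that the excluded set must \emph{grow cumulatively} as you pass through the slices. Order $S_j=\{1,\ldots,p\}$, set $R_0:=I$ and $R_s:=R_{s-1}\cup F_j(\cA_s)$, and apply the inductive hypothesis to $\cA_s'$ with the excluded set $I_s:=R_{s-1}\cap F_j(\cA_s)$ (rather than the fixed set $I\cap K_s$), producing $\cQ_s:=\{H\in\cA_s : F_j(H)\not\subseteq I_s\}$. The nested chain $R_0\subseteq R_1\subseteq\cdots\subseteq R_p$ makes the $\cQ_s$ disjoint for free: $H\in\cQ_s$ exactly when $F_j(H)\subseteq R_s$ and $F_j(H)\not\subseteq R_{s-1}$, and these ``half-open intervals'' in the chain are pairwise disjoint. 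The size guarantees then telescope to $\sum_{i\in J'}(|S_i|-1)+|J|$ with $J:=\{s:R_s\ne R_{s-1}\}$, and for each $s\notin J$ one finds (as in your $(\star)$ argument) a separate hyperplane contained in the slice $\{x:x_j=s\}$ with $j\in F(H)$, recovering the missing $|S_j|-|J|$. With this modification your induction on $d=|F(\cA)\setminus I|$ also survives, since $|F_j(\cA_s)\setminus R_{s-1}|\le|[k]\setminus(I\cup\{j\})|=d-1$. This is precisely the device in the paper's proof; your depth-first/greedy idea addresses the wrong side of the double-counting.
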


Note that Theorem~\ref{Simpson:thm} follows from Theorem~\ref{geometric:Simpson} by setting $I = \emptyset$. 

\begin{proof}
The proof is by induction on $|F(\cA)|$. Set
$$L_s^{(i)} := S_1 \times \cdots S_{i-1} \times \{s\} \times S_{i+1} \times \cdots S_k$$ 
for each $i \in [k]$ and $s \in S_i$, and note that if $F(\AA) = \{i\}$, then $\cA$ (being minimal) must consist precisely of the $|S_i|$ hyperplanes $L_s^{(i)}$, one for each $s \in S_i$. Moreover, if $I \subsetneq F(\cA)$ then $I = \emptyset$, and hence
$$\big| \big\{ H \in \cA : F(H) \not \subseteq I \big\} \big| \, = \, |\AA| \, = \, |S_i|,$$
as required. So suppose that $|F(\cA)| \ge 2$, and (recalling that $I\subsetneq F(\AA)$) choose an element $i \in F(\cA) \setminus I$. For each $s \in S_i$, let $\cA_s \subseteq \cA$ be a minimal cover of $L_s^{(i)}$, and observe that 
$$\cA = \bigcup_{s \in S_i} \cA_s,$$ 
since $\cA$ is minimal. For convenience, let us assume (without loss) that $S_i = \{1,\ldots,p\}$. 

Now, set $F_i(H) := F(H) \setminus \{i\}$ for each $H \in \cA_s$ (and similarly for a family of hyperplanes), and define a sequence of sets $(R_0,\ldots,R_p)$ by setting $R_0 := I$, and 
$$R_s := R_{s-1} \cup F_i(\AA_s)$$ 
for each $s \in S_i$, so in particular $R_p = F(\cA) \setminus \{i\}$. Now set $I_s := R_{s-1} \cap F_i(\AA_s)$, and define
$$\cQ_s := \big\{ H \in \cA_s : F_i(H) \not \subseteq I_s \big\}.$$
We claim that, applying the induction hypothesis to the minimal cover $\cA_s$ of $L_s^{(i)}$ (which we naturally identify with $S_1 \times \cdots S_{i-1} \times S_{i+1} \times \cdots S_k$), we have either $R_{s-1}=R_s$, or 
\begin{equation}\label{app:Q:bound}
 |\cQ_s| \, \ge 
 \sum_{j \in R_s \setminus R_{s-1}} \big( |S_j| - 1 \big) + 1
\end{equation}
for each $s \in S_i$. To see this, simply note that $R_s \setminus R_{s-1} = F_i(\cA_s) \setminus I_s$, so if $R_{s-1} \ne R_s$ then $I_s \subsetneq F_i(\cA_s)$, and that $F_i(\cA_s) \subseteq F(\cA) \setminus \{i\}$, so (since $i \in F(\cA)$) we have $|F_i(\cA_s)| < |F(\cA)|$.

Set $J := \big\{ s \in S_i : R_{s-1} \ne R_s \big\}$, and recall that~\eqref{app:Q:bound} holds for each $s \in J$. We claim that
$$\bigg| \bigcup_{s \in J} \cQ_s \bigg| \, = \, \sum_{s \in J} |\cQ_s| \, \ge \, |J| - \big( |S_i| - 1 \big) + \sum_{j \in F(\cA)\setminus I} \big( |S_j| - 1 \big).$$
The inequality follows from summing~\eqref{app:Q:bound} over $s \in J$, and recalling that $i \in F(\cA)\setminus I$, so it remains to show that the sets $\cQ_s$ are disjoint. To see this, observe that, if $H \in \cA_s$, then 
$$F_i(H) \not \subseteq I_s \qquad \Leftrightarrow \qquad F_i(H) \subseteq R_s \quad \text{and} \quad F_i(H) \not\subseteq R_{s-1},$$ 
and so $H \in A_s$ for at most one element $s \in S_i$.

Finally, we claim that for each $s \in S_i \setminus J$, there exists a hyperplane $H \in \cA$ such that $H \subseteq L_s^{(i)}$ and $F(H) \not\subseteq I$. To see this, observe first that $L_s^{(i)}$ is not covered by $\big\{ H \in \cA : i \not\in F(H) \big\}$, as otherwise $S_{[k]}$ would be covered by $\big\{ H \in \cA : i \not\in F(H) \big\}$, contradicting the minimality of $\cA$ and the fact that $i \in F(\cA)$. It follows that there exists $H \in \cA$ with $H \subseteq L_s^{(i)}$, and we have $F(H) \not\subseteq I$ because $i \in F(H) \setminus I$. Moreover, none of these $|S_i| - |J|$ distinct hyperplanes is included in $\cQ_s$ for any $s \in J$, since they do not intersect the set $\bigcup_{s \in J} L_s^{(i)}$.

Hence, noting that $F(H) \not\subseteq I$ for each $H \in \cQ_s$ (since $I \subseteq R_{s-1}$ and $F(H) \not\subseteq R_{s-1}$), we obtain
$$\big| \big\{ H \in \cA : F(H) \not \subseteq I \big\} \big| \, \ge \, \bigg| \bigcup_{s \in J} \cQ_s \bigg| + |S_i| - |J| \, \ge \sum_{j \in F(\cA)\setminus I} \big( |S_j| - 1 \big) + 1,$$
as required.
\end{proof}
}
\end{document}